\documentclass[11pt,a4paper,reqno]{amsart}
\usepackage[utf8]{inputenc}
\usepackage[T1]{fontenc}
\usepackage[UKenglish]{babel}
\usepackage[a4paper,margin=1in]{geometry}

\usepackage{amsmath, amsfonts, amssymb, amsthm, mathtools, indentfirst, bbm}
\usepackage[dvipsnames]{xcolor}
\usepackage[shortlabels]{enumitem}
\usepackage[languagenames,fixlanguage]{babelbib}
\usepackage{commath}
\usepackage{textcomp}
\usepackage{mathrsfs,dsfont}
\allowdisplaybreaks

\makeatletter
\def\namedlabel#1#2{\begingroup
    #2%
    \def\@currentlabel{#2}%
    \label{#1}\endgroup
}
\makeatother

\makeatletter \@addtoreset{equation}{section}

\makeatother

\newcommand{\cA}{\mathcal{A}}
\newcommand{\cC}{\mathcal{C}}
\newcommand{\cJ}{\mathcal{J}}
\newcommand{\cK}{\mathcal{K}}
\newcommand{\cV}{\mathcal{V}}
\newcommand{\cL}{\mathcal{L}}
\newcommand{\cT}{\mathcal{T}}
\newcommand{\cM}{\mathcal{M}}
\newcommand{\cD}{\mathcal{D}}

\usepackage{hyperref}

\newcommand{\Z}{\mathbb{Z}}

\newcommand{\ex}{\mathbf{E}}

\theoremstyle{plain}
\newtheorem{theorem}{Theorem}[section]
\newtheorem{corollary}[theorem]{Corollary}
\newtheorem{lemma}[theorem]{Lemma}
\newtheorem{proposition}[theorem]{Proposition}

\theoremstyle{definition}
\newtheorem{remark}[theorem]{Remark}

\newtheorem{definition}[theorem]{Definition}

\numberwithin{equation}{section}

\renewcommand\labelenumi{\textup{\alph{enumi})}}

\renewcommand\theenumi\labelenumi

\makeatletter\renewcommand{\p@enumii}{}\makeatother 

\makeatletter
\def\namedlabel#1#2{\begingroup
	#2%
	\def\@currentlabel{#2}%

	\label{#1}\endgroup
}

\newcounter{conum} 

\begin{document}

\title[IDS for subordinate Brownian motions in Poisson environment]{IDS for subordinate Brownian motions in Poisson random environment on nested fractals}
\author{Hubert Balsam, Kamil Kaleta, Mariusz Olszewski, Katarzyna Pietruska-Pa\l uba}

\address{H. Balsam \\ Faculty of Mathematics, Informatics and Mechanics, University of Warsaw
\\ ul. Banacha 2, 02-097  Warszawa, Poland}
\email{hubert.balsam@gmail.com}

\address{K. Kaleta \\ Faculty of Pure and Applied Mathematics, Wroc{\l}aw University of Science and Technology, Wyb. Wyspia\'nskiego 27, 50-370 Wroc{\l}aw, Poland}
\email{kamil.kaleta@pwr.edu.pl}

\address{M. Olszewski \\ Faculty of Pure and Applied Mathematics, Wroc{\l}aw University of Science and Technology, Wyb. Wyspia\'nskiego 27, 50-370 Wroc{\l}aw, Poland}
\email{mariusz.olszewski@pwr.edu.pl}

\address{K. Pietruska-Pa{\l}uba \\ Institute of Mathematics \\ University of Warsaw
\\ ul. Banacha 2, 02-097 Warszawa, Poland}
\email{kpp@mimuw.edu.pl}

\maketitle

\begin{abstract}
We establish the Lifshitz singularity of the integrated density of states (IDS) 
for random Schr\"odinger operators
\[
H^{\omega} = \phi(-\mathcal{L}) + V^{\omega}
\]
on planar unbounded nested fractals with the Good Labeling Property. Here, 
$\mathcal{L}$ is the Laplacian on the fractal, $\phi$ is an operator monotone 
function with mild regularity, and $V^{\omega}$ is a Poissonian random potential 
with a sufficiently regular profile. The main novelty of our work lies in showing that the study of $V^{\omega}$ can be effectively reduced to the analysis of certain alloy-type potential, where the sites are no longer lattice points as in the classical $\mathbb{Z}^d$ case, but fractal complexes. 
This observation enables us to apply an approach, new in the setting of Poissonian random fields, which allows us to treat a broad class of Bernstein functions $\phi$. 
In particular, it covers the case $\phi(\lambda)=(\lambda+m^{d_w/\vartheta})^{\vartheta/d_w}-m$, 
$\vartheta \in (0,d_w)$, $m>0$, corresponding to relativistic models, which were previously unattainable on fractals by known methods.

\bigskip
\noindent
\emph{Key-words}:\ integrated density of states, Poisson potential, alloy-type potential, subordinate Brownian motion, nested fractal, good labeling property, reflected process.

\bigskip
\noindent
2020 {\it MS Classification Primary}:\ 82B44, 28A80, 60K37; Secondary: 47D08, 60J45, 60J57.
\end{abstract}

\footnotetext{Research was supported by the National Science Centre, Poland, grant no.\ 2019/35/B/ST1/02421.}

\section{Introduction}

Let $X = (X_t)_{t \geq 0}$ be a subordinate Brownian motion with values in a planar unbounded simple nested fractal 
$\cK^{\langle \infty \rangle}$ (USNF) satisfying the Good Labeling Property (GLP). The infinitesimal generator of $X$ is the operator $-\phi(-\cL)$, where $\cL$ denotes the canonical Laplacian on $\cK^{\langle \infty \rangle}$, i.e., the generator of the Brownian motion $Z = (Z_t)_{t \geq 0}$ on this fractal space, and $\phi$ is a complete Bernstein function satisfying $\phi(0+)=0$ \cite{bib:SSV}. The process $X$ is obtained from $Z$ by a random time change. 
More precisely, $X_t = Z_{S_t}$, $t \geq 0$, where $S = (S_t)_{t \geq 0}$ is a subordinator with Laplace transform $\exp(-t \phi(\lambda))$, $\lambda>0$. 

We assume that the process $X$ evolves in a random environment on $\cK^{\langle \infty \rangle}$, 
generated by a Poisson point process independent of $X$. The effect of this random medium is described by a potential of the form
\begin{equation}
\label{eq:potendef_intro}
V^{\omega}(x) 
:= \int_{\cK^{\langle \infty \rangle}} 
W(x,y)\,\mu^{\omega}(\mathrm{d}y),
\end{equation}
where $\mu^{\omega}$ is the counting measure associated with a Poisson point process on $\cK^{\langle \infty \rangle}$, 
and $W$ is a nonnegative potential profile depending on two variables. The Poisson point process has intensity measure 
$\nu\, \mathfrak{m}(\mathrm{d}y)$, where $\mathfrak{m}$ denotes the normalized Hausdorff measure on the fractal, and $\nu>0$ is a fixed parameter.
The random field $V^{\omega}$ given by \eqref{eq:potendef_intro} will be called a {\em fractal Poisson-type potential}.

The main goal of this paper is to investigate  spectral properties 
of the random Schr\"odinger operator
\[
H^{\omega} = \phi(-\cL) + V^{\omega},
\quad \text{acting in} \quad L^2(\cK^{\langle \infty \rangle}, \mathfrak{m}).
\]
In particular, we study the asymptotic behaviour of the \emph{integrated density of states} (IDS) of $H^{\omega}$ 
under mild regularity assumptions on $\phi$ and the potential profile $W$. The IDS is in this case a non-random measure supported on $[0,\infty)$. In the fractal setting, it is defined as the vague limit of empirical measures counting the eigenvalues of the finite-volume operators obtained by restricting $H^{\omega}$ to $L^2(\cK^{\langle M \rangle}, \mathfrak{m})$, normalized by $\mathfrak{m}(\cK^{\langle M \rangle})$. Here, $\cK^{\langle M \rangle}$ denote the fractal complexes that approximate $\cK^{\langle \infty \rangle}$ as $M \to \infty$. Throughout, we denote the IDS by $\Lambda$. For simplicity, we use the same symbol for  its distribution function, $\Lambda(\lambda):=\Lambda([0,\lambda])$, $\lambda > 0$.

The following theorem is the main result of the paper. It shows that,
under assumption \eqref{B} on the Bernstein function $\phi$ and
assumptions \eqref{W1}, \eqref{W2} on the single-site profile $W$,
the IDS of the Schr\"odinger operator $H^{\omega}$ exhibits
Lifshitz-type behaviour at low energies.

\begin{theorem} \label{th:main_IDS}
 Assume \eqref{B}, \eqref{W1} and \eqref{W2}. Then for every $\nu_0>0$ there exist constants $C_1, C_2>0$ such that for every $\nu \geq \nu_0$
\[
- C_1 \nu \le
\liminf_{\lambda \searrow 0}
\lambda^{\frac{d}{\alpha}} \log \Lambda(\lambda)
\qquad \text{and} \qquad
\limsup_{\lambda \searrow 0}
\lambda^{\frac{d}{\alpha}} \log \Lambda(\lambda)
\le - C_2 \nu.
\]
\end{theorem}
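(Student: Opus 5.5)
The proof splits into two bounds, both based on the scaling $\phi(\lambda)\asymp \lambda^{\alpha/d_w}$ near zero encoded in \eqref{B}. We read $d$ as the Hausdorff dimension of $\cK^{\langle \infty \rangle}$ and $\alpha$ as the effective order of $\phi(-\cL)$ in the spatial variable, so that $\alpha = d_w\cdot(\text{index of }\phi \text{ at } 0)$. The relevant length scale is then $R\asymp \lambda^{-1/\alpha}$, and complexes of size $R$ have measure $\asymp R^{d}=\lambda^{-d/\alpha}$.

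\emph{Lower bound.} We use Dirichlet bracketing, i.e. monotonicity of the IDS under restriction. For a complex $\cK^{\langle M\rangle}$ of diameter $2^M\asymp \lambda^{-1/\alpha}$ (up to a suitable constant), we obtain $\Lambda(\lambda)\ge \mathfrak{m}(\cK^{\langle M\rangle})^{-1}\,\mathbf{P}\bigl(\lambda_1^{D}(\cK^{\langle M\rangle};\omega)\le\lambda\bigr)$. Two ingredients give this probability.
\begin{enumerate}
\item The Dirichlet principal eigenvalue of $\phi(-\cL)$, for the killed subordinate process, on a complex of size $2^M$ is at most $c\,\phi(2^{-Md_w})\le \lambda/2$.
\item On the event that the Poisson process has no points in a neighbourhood of $\cK^{\langle M\rangle}$ (and, by \eqref{W1}--\eqref{W2}, the tail contribution of $W$ from far points to the energy of the ground state is at most $\lambda/2$), the Dirichlet eigenvalue is at most $\lambda$.
\end{enumerate}
This event has probability $\exp(-\nu\, c\, 2^{Md})=\exp(-C_1\nu\lambda^{-d/\alpha})$. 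The tail of $W$ is handled by restricting to a Poisson configuration empty on an enlarged complex, and the polynomial prefactor is negligible after taking logarithms.

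\emph{Upper bound.} This is the main step. We use Neumann-type bracketing via the reflected process on $\cK^{\langle M\rangle}$, which is available thanks to GLP: $\Lambda(\lambda)\le \mathfrak{m}(\cK^{\langle M\rangle})^{-1}\,\ex\bigl[N^{N}_M(\lambda)\bigr]$, where $N^N_M$ counts eigenvalues of the reflected subordinate operator plus $V^\omega$ on the torus-like complex. The key reduction, which is the novelty announced in the abstract, replaces $V^\omega$ by an alloy-type minorant. Partition $\cK^{\langle M\rangle}$ into $0$-complexes (or $k_0$-complexes) $\Delta$, and set
\[
V^\omega \ge \sum_{\Delta} q_\Delta(\omega)\,\mathbf 1_{\Delta}, \qquad q_\Delta=\kappa\,\mathbf 1\{\mu^\omega(\Delta)\ge 1\},
\]
using \eqref{W1} (a lower bound of $W$ on the diagonal scale). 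The $q_\Delta$ are i.i.d. Bernoulli with success probability $p=1-e^{-\nu\mathfrak m(\Delta)}$. This is where the $\nu$-dependence enters, and it gives $\log(1-p)=-\nu\mathfrak m(\Delta)$, uniformly for $\nu\ge\nu_0$.

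For the alloy model we then follow the Kirsch--Simon / Stollmann-type argument adapted to reflected subordinate processes on fractals. The steps are as follows.
\begin{enumerate}
\item The bottom of the spectrum of the reflected operator plus the alloy potential on a complex of size $2^M$ exceeds $\lambda$ unless a positive fraction of sites carry $q_\Delta=0$. This is a Temple-inequality or uncertainty-type lower bound, using the spectral gap of the reflected operator, of order $\phi(2^{-Md_w})$, and the fact that the ground state of the reflected process is constant.
\item Choose $M$ with $\phi(2^{-Md_w})\asymp \lambda$, i.e. $2^{Md}\asymp\lambda^{-d/\alpha}$.
\item A large-deviation bound for the Bernoulli field then gives probability $\le \exp(-c\nu 2^{Md})$.
\item The eigenvalue count is at most $\mathfrak m(\cK^{\langle M\rangle})$ times a constant, by a trace/heat-kernel bound.
\end{enumerate}

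The hardest point is the Temple-type step for nonlocal $\phi(-\cL)$, where the constant-function ground state and spectral gap must be controlled uniformly in $M$. We would try the heat-kernel/Feynman--Kac approach first: bound $\ex\,\mathrm{tr}\,e^{-tH}$ on the reflected complex by $\ex_x\exp\bigl(-\int_0^t V\bigr)$, and use that the reflected subordinate process spends a proportion of time in occupied sites comparable to their density.
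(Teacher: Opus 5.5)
Your skeleton matches the paper's key ingredients. You minorize $V^\omega$ by an alloy-type potential built from Bernoulli occupation indicators of complexes. You apply Temple's inequality with the constant ground state and $\eta=\lambda_2^M\asymp L^{-M\alpha}$, then a binomial large-deviation bound. For the lower bound you use an empty $M_0$-vicinity together with $\lambda_1^X\le\phi(\lambda_1^Z)$ and Brownian scaling. Two small corrections: the minorant comes from (\ref{W2}.a), not \eqref{W1}, so the complexes must be of order $m_0$; and the scale is $L^M$, not $2^M$.

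\textbf{The gap in the Temple step.} The step you flag as hardest is where the proof actually lives, and the missing idea is concrete. Temple needs $\langle\psi,H\psi\rangle<\eta$. For constant $\psi$, $\langle\psi,H\psi\rangle$ is the spatial average of the potential, which is of order $A_0\theta$ ($\theta$ the fraction of occupied complexes). That is far above $\eta\asymp L^{-M\alpha}$. Moreover, with site height $h$ the Temple error term is about $h^2\theta/\eta$, which beats the main term $h\theta$ unless $h\ll\eta$. The paper therefore first lowers the potential, using monotonicity of $\lambda_1$ in $V$. It caps the site heights at $D_0L^{-M\alpha}$ with $D_0=\widetilde C_1/(4A_0)$, and only then applies Temple, getting $\lambda_1\ge \widetilde C_1\theta/(8L^{M\alpha})$ (Lemmas \ref{lem:l-33}, \ref{lem:34}). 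Uniform control of the ground state and gap is not the difficulty: both are explicit for $\phi(-\cL^M)$. Your fallback (occupation time in occupied sites comparable to their density) does not work. $\mathbf E^x\exp(-\int_0^tV)$ is governed by rare paths avoiding occupied sites, and controlling those is equivalent to the eigenvalue bound itself. This is essentially the coarse-graining route the paper explains is unavailable beyond stable-like $\phi$.

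\textbf{The large-deviation threshold.} The event to control is ``fewer than a fraction $\delta$ of complexes occupied'', i.e. at least a fraction $1-\delta$ empty. Here $\delta$ must be small enough that $1-\delta>e^{-\nu_0L^{m_0d}}$ and $\frac{1}{1-\delta}\delta^{-\delta/(1-\delta)}e^{-\nu_0L^{m_0d}/2}<1$. ``A positive fraction empty'' does not give $e^{-c\nu L^{Md}}$ uniformly in $\nu\ge\nu_0$.

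\textbf{The bracketing steps, and how the paper avoids them.} You work with $\Lambda(\lambda)$ directly via Dirichlet/Neumann bracketing of counting functions. The paper instead works with $\mathbb L(t)$ and converts at the end by Fukushima's exponential Tauberian theorem. For nonlocal $\phi(-\cL)$, your bracketing is not automatic.
\begin{itemize}
\item \emph{Dirichlet side.} The form of the killed subordinate process couples functions supported on different complexes through jumps, with cross terms of indefinite sign. Superadditivity of counting functions therefore survives only with a loss, e.g. $\lambda\mapsto2\lambda$ via Cauchy--Schwarz against the killing term. The paper avoids this by domain monotonicity of killed heat-kernel traces, \eqref{eq:dd}.
\item \emph{Neumann side.} Subadditivity does not follow ``from GLP''. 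You would need $-\cL^{M'}\ge\bigoplus_i(-\cL^N_{\Delta_i})$ in form sense, which comes from the self-similar decomposition of the Brownian energy, plus operator monotonicity of the complete Bernstein $\phi$. The paper instead uses the Feynman--Kac monotonicity for periodized potentials (Lemma \ref{lem:monoto}), leading to $\mathbb L(t)\le C\,\mathbb E^{\mathbb Q}e^{-(t-1)\lambda_1^{N,M,V_M^\omega}}$, \eqref{eq:trace-estimate}.
\item \emph{Passage to the limit.} You also need to pass from finite volume to the vague limit $\Lambda$, via portmanteau and uniform trace bounds.
\end{itemize}
These steps are repairable, but as written they are assertions rather than arguments.
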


We would like to emphasize that in this non-smooth fractal setting 
the very existence of the IDS is already a non-trivial issue. 
In the present framework, however, it can be established by following 
a general method,  developed in \cite{bib:KaPP2} for the planar Sierpi\'nski gasket. 
This approach requires a sufficiently regular periodic structure of the fractal,  
which we refer to as the Good Labeling Property (GLP) \cite{bib:KOP} (see also \cite{bib:NO,bib:O}). 
In the present paper, we take the existence of the IDS for granted. 
However, for the reader's convenience and completeness, we include it as Theorem \ref{thm:IDS}
and briefly discuss the main ideas of the proof in Section \ref{sec:IDS_ex}. 
A concise outline of the argument, together with precise references, 
is provided in Appendix \ref{sec:appA}.

The results of this paper fit into the rapidly developing area of  analysis and probability on fractals, which remains a very active field of research. We refer, for example, to the work of Kigami \cite{bib:Kig4}, the series of papers by Alonso-Ruiz et al. \cite{bib:ARBCRST1,bib:ARBCRST2,bib:ARBCRST3,bib:ARBCRST4}, which focus in particular on function spaces on fractals, and to the work of Baudoin et al.~\cite{bib:FB-LC-CHH-CO-ST-JW} on the Parabolic Anderson Model. Fractal spaces and domains with fractal boundaries also play an increasingly important role in modeling and applications, including theory of PDE's (Chen et al.\ in \cite{bib:CHT} and Dekkers et al. \cite{bib:DRT}), information theory  (Akkermans et al.\ \cite{bib:ACDRT}), magnetostatics problems (Hinz and Teplyaev \cite{bib:HT}), acoustics theory (Hinz et al.\ \cite{bib:HRT}), theory of random walks on these sets (e.g.\ Kumagai and  Nakamura \cite{bib:KN}). See also the paper of Kumagai \cite{bib:Kum7} for an overview on diffusions on disordered  media.

Our framework, where the subordinate Brownian motion $X$ evolves in a Poisson random medium on a fractal, 
can be viewed as a model of a particle moving in a perfectly ordered material with impurities, defects, or dislocations — that is, in a disordered medium. Such perfect order is described by the USNF $\cK^{\langle \infty \rangle}$ with the GLP, 
which has a self-similar and well-defined periodic structure, while the disorder is introduced by the Poissonian random field $V^{\omega}$.
Defects of the structure are modeled by a random cloud of sites 
$\{y_i(\omega)\}$ generated by the Poisson point process, located irregularly and independently of the geometry of 
$\cK^{\langle \infty \rangle}$. The effect of a single Poissonian site $y_i(\omega)$ on a particle 
at position $x$ is described by a single-site potential $W(x,y_i(\omega))$, and the total potential is given by
\begin{align} \label{eq:poiss_pot_heur}
V^{\omega}(x) = \sum_{i} W(x,y_i(\omega)) = \int_{\cK^{\langle \infty \rangle}} 
W(x,y)\,\mu^{\omega}(\mathrm{d}y),
\end{align}
as introduced above. The Schr\"odinger operator
\[
H^{\omega} = \phi(-\cL) + V^{\omega}
\]
represents the Hamiltonian of the system. Its kinetic term, $\phi(-\cL)$, which is the negative of the generator of the process $X$, can come from a wide range of operators, depending on the choice of the complete Bernstein function $\phi$.
Assuming that the motion of a quantum particle follows, at least statistically, the distributional properties of the process $X$ with generator $-\phi(-\cL)$, our framework covers not only the non-relativistic case with $\phi(\lambda)=\lambda$ (corresponding to Brownian motion on $\cK^{\langle \infty \rangle}$) and $\phi(\lambda)=\lambda^{\alpha/d_w},$ $\alpha\in (0,d_w)$ (corresponding to  stable processes on $\cK^{\langle\infty\rangle})$, but also relativistic models with 
\[
\phi(\lambda) = (\lambda + m^{d_w/\vartheta})^{\vartheta/d_w} - m, \quad \vartheta \in (0,d_w), \ m>0
\]
(corresponding to relativistic stable processes on $\cK^{\langle \infty \rangle}$). Such jump processes can travel very large distances in a short time via a combination of small jumps rather than classical diffusion, making them a reasonable model for a particle moving at very high velocity. 

This interpretation, in particular the fact that the operator $H^{\omega}$ represents the energy of the system, shows that 
understanding of the spectral properties of $H^{\omega}$, including those of the IDS, is of fundamental importance (see the monographs of Carmona and Lacroix~\cite{bib:Car-Lac} and Stollmann~\cite{bib:Stoll}). In particular, the Lifschitz singularity of the IDS of a random Schr\"{o}dinger operator is closely related to the phenomenon 
of spectral localisation, see, e.g., Bourgain and Kenig~\cite{bib:Bou-Ken}, Combes and Hislop~\cite{bib:Com-His},
Germinet, Hislop and Klein~\cite{bib:Ger-His-Kle}, Kirsch and Veseli\'{c}~\cite{bib:KV}, Klopp~\cite{bib:Klopp1,bib:Klopp2}, and references therein. Note that Lifschitz tails have been extensively studied in both the Euclidean and the integer-lattice settings for various types of random potentials.  
For results concerning Schr\"odinger operators with Laplace kinetic terms, we refer the reader to 
Benderskii and Pastur~\cite{bib:BP}, 
Friedberg and Luttinger~\cite{bib:FL},
Fukushima~\cite{bib:F}, 
Fukushima, Nagai and Nakao~\cite{bib:FNN}, 
Kirsch and Martinelli~\cite{bib:KM1,bib:KM2}, 
Kirsch and Simon~\cite{bib:KS}, 
Kirsch and Veseli\'c~\cite{bib:KV}, 
Luttinger~\cite{bib:Lut}, 
Mezincescu~\cite{bib:Mez},
Nagai~\cite{bib:Nag}, 
Nakao~\cite{bib:Nak}, 
Pastur~\cite{bib:Pas}, 
Romerio and Wreszinski~\cite{bib:RW}, 
and Simon~\cite{bib:Sim}. 
For non-local kinetic terms, see 
Okura~\cite{bib:Okura},
Gebert and Rojas-Molina~\cite{bib:GRM,bib:GRM2}, and
Kaleta and Pietruska-Pa{\l}uba~\cite{bib:KaPP4,bib:KaPP3}.

In this context, we also refer to the seminal paper of Donsker and Varadhan \cite{bib:DV}, and Sznitman \cite{bib:Szn1}, concerned with Poissonian obstacles and potentials in the Euclidean space.

On irregular structures such as fractals, results concerning the Lifschitz tail of the IDS are scarce. Early work by Pietruska-Pa{\l}uba~\cite{bib:KPP-PTRF} treated Brownian motion with Poissonian killing obstacles on the planar Sierpiński gasket, while Shima~\cite{bib:Sh} extended this to general nested fractals with finite-range Poisson potentials. Later, Kaleta and Pietruska-Pa{\l}uba~\cite{bib:KaPP} analysed operators of the form $\phi(-\cL)+V^\omega$, corresponding to a class of subordinate Brownian motions and more general Poisson potentials on the planar Sierpiński gasket. Most recently, they established the Lifschitz tail for subordinate Brownian motions on USNF with the GLP, as considered in the present paper, but for a different type of random fields, namely fractal alloy-type potentials \cite{bib:HB-KK-MO-KPP}.

To the best of our knowledge, the paper~\cite{bib:KaPP} was the most recent work on Poisson random media on fractals. However, its scope was essentially limited, both in terms of the class of subordinate processes (equivalently, the kinetic terms $\phi(-\cL)$) and the underlying state space, being restricted to the planar Sierpiński gasket. The reason lies in the method of the proof, a variant of the coarse-graining technique introduced by Sznitman \cite{bib:Szn1}, which applies only to those subordinate processes that, after suitable reductions, can be brought to the $\alpha$-stable case. This approach, although technically involved, is particularly well suited to processes possessing scaling properties. Consequently, that paper excluded non-local models with $\phi(\lambda) \approx \lambda$ as $\lambda \searrow 0$, which are arguably the most interesting from the perspective of mathematical physics, such as the relativistic models with mass $m>0$ mentioned above.

This was exactly the starting point of the present project:\ over the years, we have been seeking for an appropriate argument that would allow us to tackle this problem.  The solution that arose only recently is based on an idea reducing the Poissonian random potential 
to a potential structurally close to one of alloy-type. 
In other words, the Poissonian random environment on a fractal can be effectively transformed into a random medium with  certain alloy-type structure. This may seem somewhat surprising, as the Poisson random potential $V^{\omega}$ in \eqref{eq:poiss_pot_heur} is structurally quite different from the standard alloy-type potential 
\[
V^{\omega}_{\mathrm{alloy}}(x) := \sum_{v \in \cV_0^{\langle \infty \rangle}} \xi_v(\omega)\, W(x,v),
\]
which was recently introduced in \cite{bib:HB-KK-MO-KPP} as a fractal analog of the classical alloy-type model in $\mathbb{Z}^d$. 
Indeed, for $V^{\omega}$, the disorder arises from the random locations of the Poisson sites $y_i(\omega)$, whereas for $V^{\omega}_{\mathrm{alloy}}$ the sites are arranged regularly according to the geometry of the fractal, and the randomness is carried by the single-site random variables $\xi_v(\omega)$, which form an i.i.d.\ family. In this approach, we introduce the potential
\begin{align} \label{eq:alloy-complex}
\overline{V}^{\,\omega}(x) := \sum_{\Delta \in \cT_0} \xi_{\Delta}(\omega)\,\overline{W}(x,\Delta),
\end{align}
where the i.i.d.\ single-site random variables $\xi_{\Delta}(\omega)$ are associated with fixed size fractal complexes, and 
$\overline{W} : \cK^{\langle \infty \rangle} \times \cT_0 \to [0,\infty)$ is the corresponding single-site potential; see Remark \ref{rem:complex_alloy} for a further discussion. In our present setup, the distribution of the $\xi_{\Delta}$'s is induced by $\mu^{\omega}$.

To the best of our knowledge, this concept is novel. It constitutes a key step in the proof of Theorem \ref{th:main}, where the  the upper bound for the Laplace transform of the IDS is proven. 
The main advantage of this step is that it allows us to apply to $\overline{V}^{\,\omega}$ the powerful method developed recently in \cite{bib:KaPP3} and further applied in \cite{bib:HB-KK-MO-KPP}. 
This method is functional-analytic in nature, relying on the application of Temple's inequality. 
It has been highly effective for Schr\"odinger operators with standard alloy-type random potentials, i.e., with single-sites located on lattices, but we have found a way to adapt it to the setting of the new alloy-type potentials of the form \eqref{eq:alloy-complex}. This new approach allows us to extend the results of \cite{bib:KaPP} to a broad class of subordinate processes (or kinetic operators) with $\phi$ satisfying assumption \eqref{B}, with potential profiles $W$ of bounded support, on USNFs possessing the GLP.
We note that the idea of using Temple's inequality in the context of alloy-type potentials originates from the influential works of Simon \cite{bib:Sim}, and Kirsch and Simon \cite{bib:KS}.

The paper is organized as follows.
Section \ref{sec:prel} contains the necessary preliminary material needed for the proofs of the main theorems. This includes essentials on nested fractals, the good labeling property, stochastic processes on fractal sets, their semigroups and generators, and random Schr\"odinger operators. We recall that the class of subordinate processes and the class of fractals studied in this paper coincide with those considered in our recent work \cite{bib:HB-KK-MO-KPP}, where alloy-type potentials on fractals were considered. For the reader’s convenience, we repeat the necessary definitions and results here, and occasionally refer to the cited paper for further technical details.
In Section \ref{subsec:FK}, we introduce a class of fractal random Poisson potentials and their periodizations, and we prove that the random potentials under consideration belong to the Kato classes of the respective processes (Proposition \ref{pro:katoclass}). The existence of the IDS is further discussed in Section \ref{sec:IDS_ex}.
Section \ref{sec:LS_general} contains the proof of the upper bound for the Laplace transform of the IDS (Theorem \ref{th:main}), which constitutes the central part of the paper. The lower bound for the IDS (Theorem \ref{th:main-2}) and the proof of the Lifschitz singularity (Theorem \ref{th:main_IDS}) are given in Section \ref{sec:LS_bounded}. Appendix \ref{sec:appA} provides additional details and references concerning the existence of the IDS.

\section{Preliminaries} \label{sec:prel}

\subsection{Unbounded (planar) simple nested fractals} \label{sec:usnf}

In this section, we recall several basic notions concerning planar nested fractals that will be used throughout the paper. For consistency, we adopt the approach developed in our earlier work \cite{bib:HB-KK-MO-KPP}. This framework is also consistent with earlier studies on nested fractals \cite{bib:Kum1,bib:Lin,bib:kpp-sausage,bib:kpp-sto}.

Consider a family of similitudes $\{\Psi_i\}_{i=1}^N$ acting on $\mathbb{R}^2$, all having the same scaling factor $L>1$ and the same isometric part $U$. Thus, each mapping is given by $\Psi_i(x) = (1/L) U(x) + \nu_i$,
where $\nu_i \in \mathbb{R}^2$ for $i \in \{1, \dots, N\}$. Without loss of generality, we assume that $\nu_1 = 0$.
There exists a unique nonempty compact set $\mathcal{K}^{\langle 0 \rangle}$, referred to as the \emph{fractal generated by the system} $(\Psi_i)_{i=1}^N$, satisfying the invariance relation
\[
\mathcal{K}^{\langle 0 \rangle}
=
\bigcup_{i=1}^{N} \Psi_i\big( \mathcal{K}^{\langle 0 \rangle} \big).
\]
Since $L>1$, each similitude admits a unique fixed point. Consequently, the transformations $\Psi_1, \dots, \Psi_N$ have exactly $N$ fixed points in total.  We denote by $F$ the set consisting of these fixed points.
A point $x \in F$ is called an \emph{essential fixed point} if there exist another point $y \in F$ and two distinct similitudes $\Psi_i$ and $\Psi_j$ such that $\Psi_i(x) = \Psi_j(y)$. 
The collection of all essential fixed points of the transformations $\Psi_1, \dots, \Psi_N$ is denoted by $\mathcal{V}_{0}^{\langle 0 \rangle}$.

The set $\mathcal{K}^{\left\langle 0\right\rangle}$ is called a (planar) \emph{nested fractal} if the following conditions are met.
\begin{enumerate}
\item $k:=\# \cV_{0}^{\left\langle 0\right\rangle} \geq 2.$
\item (Open Set Condition) There exists an open set $U \subset \mathbb{R}^2$ such that for $i\neq j$ one has\linebreak $\Psi_i (U) \cap \Psi_j (U)= \emptyset$ and $\bigcup_{i=1}^N \Psi_i (U) \subseteq U$.
\item (Nesting) $\Psi_i\left(\mathcal{K}^{\left\langle 0 \right\rangle}\right) \cap \Psi_j \left(\mathcal{K}^{\left\langle 0 \right\rangle}\right) = \Psi_i \left(\cV_{0}^{\left\langle 0\right\rangle}\right) \cap \Psi_j \left(\cV_{0}^{\left\langle 0\right\rangle}\right)$ for $i \neq j$.
\item (Symmetry) For $x,y \in \cV_{0}^{\left\langle 0\right\rangle},$ let $S_{x,y}$ denote the symmetry with respect to the line bisecting the segment $\left[x,y\right]$. Then
\begin{equation*}
\forall i \in \{1,...,M\} \ \forall x,y \in \cV_{0}^{\left\langle 0\right\rangle} \ \exists j \in \{1,...,M\} \ S_{x,y} \left( \Psi_i \left(\cV_{0}^{\left\langle 0\right\rangle} \right) \right) = \Psi_j \left(\cV_{0}^{\left\langle 0\right\rangle} \right).
\end{equation*}
\item (Connectivity) On the set $\cV_{-1}^{\left\langle 0\right\rangle}:= \bigcup_i \Psi_i \left(\cV_{0}^{\left\langle 0\right\rangle}\right)$ we define graph structure $E_{-1}$ as follows:\\
$(x,y) \in E_{-1}$ if and only if $x, y \in \Psi_i\left(\mathcal{K}^{\left\langle 0 \right\rangle}\right)$ for some $i$.
Then the graph $(\cV_{-1}^{\left\langle 0\right\rangle},E_{-1} )$ is required to be connected.
\end{enumerate}

Since for $k = 2$ the set $\mathcal{K}^{\langle 0 \rangle}$ reduces to a line segment joining two essential fixed points, we assume throughout that $k \geq 3$. In this case, the elements of $\mathcal{V}_{0}^{\langle 0 \rangle}$ form the set of vertices of a regular polygon \cite[Proposition 2.1]{bib:KOP}.

We next define the following objects.
\begin{definition} Let $M\in\mathbb{Z}.$
\begin{itemize}
\item[(1)]
\begin{equation} \label{eq:Kn}
\mathcal{K}^{\left\langle M\right\rangle} = L^M \mathcal{K}^{\left\langle 0\right\rangle}
  = \Psi^{-M}_1(\mathcal K^{\langle 0\rangle}),
\end{equation}
 is the fractal of size $L^M$
(the $M$-complex  attached to $(0,0)$, see \eqref{eq:Mcompl} below).

\item[(2)]
\begin{equation} \label{eq:Kinfty}
\mathcal{K}^{\left\langle \infty \right\rangle} = \bigcup_{M=0}^{\infty} \mathcal{K}^{\left\langle M\right\rangle}.
\end{equation}
is the unbounded simple nested fractal (USNF).
\item[(3)] $M$-complex: \label{def:Mcomplex}
every set $\Delta_M \subset \mathcal{K}^{\left\langle \infty \right\rangle}$ of the form
\begin{equation} \label{eq:Mcompl}
\Delta_M  = \mathcal{K}^{\left\langle M \right\rangle} + \nu_{\Delta_M},
\end{equation}
where $\nu_{\Delta_M}=\sum_{j=M+1}^{J} L^{j} \nu_{i_j},$ for some $J \geq M+1$, $\nu_{i_j} \in \left\{\nu_1,...,\nu_N\right\}$, is called an \emph{$M$-complex}. The family of all $M$-complexes in $\mathcal K^{\langle \infty \rangle}$ will be denoted by $\mathcal{T}_M$.
\item[(4)] Vertices of the $M$-complex \eqref{eq:Mcompl}: the set $\cV\left(\Delta_M\right) =L^M\cV_0^{\langle 0 \rangle}+\nu_{\Delta_M}= L^{M} \mathcal V^{\left\langle 0 \right\rangle}_0 + \sum_{j=M+1}^{J} L^{j} \nu_{i_j}$.
\item[(5)] Vertices of $\mathcal{K}^{\left\langle M \right\rangle}$:
$$
\mathcal V^{\left\langle M\right\rangle}_{M} = \cV\left(\mathcal{K}^{\left\langle M \right\rangle}\right) = L^M \mathcal V^{\left\langle 0\right\rangle}_{0}.
$$
\item[(6)] Vertices of all $M$-complexes inside an $(M+m)$-complex (defined recursively for $m>0$):
$$
\cV_M^{\langle M+m\rangle}= \bigcup_{i=1}^{N} \cV_M^{\langle M+m-1\rangle} + L^{M+m} \nu_i.
$$
\item[(7)] Vertices of all 0-complexes inside the unbounded nested fractal:
$$
\mathcal V^{\left\langle \infty \right\rangle}_{0} = \bigcup_{M=0}^{\infty} \mathcal V^{\left\langle M\right\rangle}_{0}.
$$
\item[(8)] Vertices of $M$-complexes from the unbounded fractal:
$$
\mathcal V^{\left\langle \infty \right\rangle}_{M} = L^{M} \mathcal V^{\left\langle \infty \right\rangle}_{0}.
$$
\item[(9)]$\cC_M(x)$:  the union of all $M-$complexes containing $x$:\\
-- the unique $M-$complex when $x\notin \cV_M^{\langle \infty \rangle},$\\
-- the union of all $M-$complexes intersecting at $x$ when $x\in \cV_M^{\langle \infty\rangle}.$ \\
The number of such complexes is denoted by ${\rm rank}_M\,(x)$ and may depend on $M.$
\item[(10)] $M$-complexes in $\cK^{\langle M+1 \rangle}$:
$$
\Delta_{M,i} = \cK^{\langle M \rangle} + L^M \nu_i, \quad i= 1,...,N.
$$
\end{itemize}
\end{definition}

The fractal (Hausdorff) dimension of $\mathcal{K}^{\langle \infty \rangle}$ is $d = \frac{\log N}{\log L}$. The Hausdorff measure in dimension $d$ supported on $\mathcal{K}^{\langle \infty \rangle}$ will be denoted by $\mathfrak{m}$, and it is normalized so that $\mathfrak{m}(\mathcal{K}^{\langle 0 \rangle}) = 1$. This measure serves as a \emph{uniform} measure on $\mathcal{K}^{\langle \infty \rangle}$.

\begin{definition}\label{def:graph-distance}
For $M \in \mathbb Z$ and $x,y \in \mathcal{K}^{\left\langle \infty \right\rangle}$ let
\begin{equation}
\label{eq:graphmetric}
d_M (x,y):= \left\{ \begin{array}{ll}
0, & \textrm{if } x=y ;\\[1mm]
1, & \textrm{if there exists } \Delta_M \in \mathcal{T}_M \textrm{ such that } x,y \in \Delta_M \textrm{ and } x \neq y ;\\[1mm]
n>1, & \textrm{if there does not exist } \Delta_M \in \mathcal{T}_M \textrm{ such that } x,y \in \Delta_M \textrm{ and } \\ 
& n \textrm{ is the smallest number for which there exists a chain } \\
& \Delta_M^{(1)}, \Delta_M^{(2)}, ..., \Delta_M^{(n)} \in \mathcal{T}_M \textrm{ such that } x \in \Delta_M^{(1)},
 y \in \Delta_M^{(n)} \\ &  \textrm{and } \Delta_M^{(i)} \cap \Delta_M^{(i+1)} \neq \emptyset \textrm{ for  }1 \leq i \leq n-1.
\end{array} \right.
\end{equation}
\end{definition}
The set $\mathcal{C}_M(x)$ defined above is simply the ball of radius $1$ centered at $x$ in the metric $d_M$.

Let
\begin{equation}\label{eq:def-r0}
r_0 := \max \{\mathrm{rank_0}(v) : v \in \mathcal{V}_0^{\langle \infty \rangle}\}.
\end{equation}
For $k \geq 4$ we have $r_0 = 2$, whereas for $k=3$ it holds that $r_0 \in \{2,3\}$. In the case of the Sierpi\'nski gasket $r_0 = 2$, while \cite[Example 3.2]{bib:KOP} illustrates the possibility of  $r_0 = 3$.

We also need the estimate on the number of points from the $0-$grid     inside $\mathcal K^{\langle M\rangle},$ for $M\in\mathbb Z_+,$ i.e.\ the cardinality of $\cV_0^{\langle M\rangle}.$
Denote this number by $k_0^{\langle M\rangle}.$
We readily see that there exists a constant $C_0>1$ such that
\begin{equation}\label{eq:number-points}
L^{ Md}\leq \# \mathcal V^{\langle M\rangle}_0 = k_0^{\langle M\rangle} \leq C_0 L^{ Md}.
\end{equation}

\subsection{Good Labeling Property and canonical projection} \label{sec:glp}
In this section, we recall the concept of a good labeling of vertices, as introduced in our previous work \cite{bib:KOP}.

Let $\mathcal{A} := \{a_1, a_2, \dots, a_k\}$ be an alphabet of $k$ symbols, where $k = \# \mathcal{V}_0^{\langle 0 \rangle} \geq 3$. The elements of $\mathcal{A}$ are called \emph{labels}. A \emph{labeling function of order $M \in \mathbb{Z}$} is any map
\[
l_M : \mathcal{V}_M^{\langle \infty \rangle} \to \mathcal{A}.
\]
Recall that the vertices of each $M$-complex $\Delta_M$ form the vertices of a regular polygon with $k$ vertices. In particular, there exist exactly $k$ rotations about the barycenter of $\mathcal{K}^{\langle M \rangle}$ that map $\mathcal{V}_M^{\langle M \rangle}$ onto itself. We denote these rotations by $\{R_1, \dots, R_k\} =: \mathcal{R}_M$, ordered so that $R_i$ rotates by an angle of $\frac{2 \pi i}{k}$ for $i = 1, \dots, k$.

\begin{definition}[\textbf{Good labeling function of order $M$}]
\label{def:glp} Let $M \in \mathbb{Z}$.  A function $\ell_M: \mathcal V^{\left\langle \infty \right\rangle}_{M} \to \cA$  is called a \emph{good labeling function of order $M$} if the following conditions are met.
\begin{itemize}
\item[(1)] The restriction of $\ell_M$ to $\mathcal V^{\left\langle M \right\rangle}_{M}$ is a bijection onto $\cA$.
\item[(2)] For every $M$-complex $\Delta_M$ represented as
$$
\Delta_M  = \mathcal{K}^{\left\langle M \right\rangle} +\nu_{\Delta_M},
$$
where $\nu_{\Delta_M}=  \sum_{j=M+1}^{J} L^{j} \nu_{i_j},$  with some $J \geq M+1$ and $\nu_{i_j} \in \left\{\nu_1,...,\nu_N\right\}$ (cf. Def. \ref{def:Mcomplex}), there exists a rotation $R_{\Delta_M} \in \mathcal{R}_M$ such that
\begin{align}\label{eq:rot}
\ell_M(v)=\ell_M\left(R_{\Delta_M}\left(v -\nu_{\Delta_M}\right)\right) , \quad v \in \cV\left(\Delta_M\right).
\end{align}
\end{itemize}
\end{definition}
A USNF $\mathcal{K}^{\langle \infty \rangle}$ is said to have the \emph{good labeling property of order $M$} if there exists a good labeling function of order $M$.

Due to the self-similar structure of $\mathcal{K}^{\langle \infty \rangle}$, the \emph{good labeling property of order $M$} for some $M \in \mathbb{Z}$ is equivalent to the same property at any other order $M' \in \mathbb{Z}$. This motivates the following general definition.

\begin{definition}[\textbf{Good labeling property}] \label{def:glp_gen} A USNF $\mathcal{K}^{\left\langle \infty \right\rangle}$ is said to have the \emph{good labeling property (GLP in short)} if it has the good labeling property of order $M$ for some $M \in \mathbb{Z}$.
\end{definition}

In other words, the fractal $\mathcal{K}^{\langle \infty \rangle}$ has the GLP if and only if the vertices in $\mathcal{V}_M^{\langle \infty \rangle}$ can be labeled in such a way that each $M$-complex contains the complete set of labels and the order of labels is preserved across complexes.

It is known that the fractal has the GLP when $k$ is prime or of the form $k = 2^n$. For other values of $k$, one can construct examples both with and without the GLP, depending on the arrangement of $0$-complexes within $1$-complexes. We refer the reader to \cite{bib:KOP, bib:NO} for a detailed analysis and examples. 

\medskip

For an unbounded fractal $\mathcal{K}^{\langle \infty \rangle}$ with the GLP, we define a projection map $\pi_{M}$ from $\mathcal{K}^{\left\langle \infty \right\rangle}$ onto the primary $M$-complex $\mathcal{K}^{\left\langle M \right\rangle}$ by
\begin{equation}\label{eq:piem}
\pi_M(x) := R_{\Delta_M}\bigl(x - \nu_{\Delta_M}\bigr), \quad x \in \mathcal{K}^{\langle \infty \rangle},
\end{equation}
where $\Delta_M = \mathcal{K}^{\langle M \rangle} + \nu_{\Delta_M} = \mathcal{K}^{\langle M \rangle} + \sum_{j=M+1}^{J} L^j \nu_{i_j}$ is the $M$-complex containing $x$, and $R_{\Delta_M} \in \mathcal{R}_M$ is the unique rotation determined by \eqref{eq:rot}. More precisely:

\begin{itemize}
\item[(1)] If $x \notin \mathcal{V}_M^{\langle \infty \rangle}$, we set $\Delta_M = \mathcal{C}_M(x)$, i.e.\ the unique $M$-complex containing $x$.
\item[(2)] If $x \in \mathcal{V}_M^{\langle \infty \rangle}$, any $M$-complex containing $x$ can be chosen; thanks to the GLP, the projection does not depend on this choice.
\end{itemize}

The restriction of $\pi_M$ to any $M$-complex $\Delta_M$ is a bijection, so its inverse
\[
\widetilde{\pi}_{\Delta_M} := \bigl(\pi_M\mid_{\Delta_M}\bigr)^{-1}
\]
is well defined and given by
\[
\widetilde{\pi}_{\Delta_M}(x) = R_{\Delta_M}^{-1}(x) + \nu_{\Delta_M}, \quad x \in \mathcal{K}^{\langle M \rangle}.
\]
More generally, we can project onto any $M$-complex $\Delta_M$ by setting
\begin{equation}\label{eq:uprojection}
\pi_{\Delta_M}(x) := \widetilde{\pi}_{\Delta_M}\bigl(\pi_M(x)\bigr), \quad x \in \mathcal{K}^{\langle \infty \rangle}.
\end{equation}
Clearly, for the primary $M$-complex we have $\pi_{\mathcal{K}^{\langle M \rangle}} = \pi_M$, since $\widetilde{\pi}_{\mathcal{K}^{\langle M \rangle}} = \mathrm{Id}$.

\subsection{Free and reflected subordinate Brownian motions}

\subsubsection{Free Brownian motion} 
The theory of Dirichlet forms (see \cite{bib:Fuk2, bib:Kum1}) provides a convenient framework for defining Brownian motion on simple nested fractals $\mathcal K^{\langle\infty\rangle}$ \cite{bib:Kus2,bib:Lin}. The resulting process $(Z_t,\mathbf P^x)_{t\ge 0,\; x\in\mathcal K^{\langle\infty\rangle}}$ is a symmetric strong Markov process with continuous paths. 
Its transition densities $g(t,x,y)$ (with respect to the $d$-dimensional Hausdorff measure $\mathfrak{m}$) satisfy the following sub-Gaussian estimates \cite[Theorems 5.2, 5.5]{bib:Kum1}:\
\begin{multline}
\label{eq:kum}
C_{1}\, t^{-d_s/2} 
\exp\!\left(
- C_{2}
\left(
\frac{|x-y|^{d_w}}{t}
\right)^{\!\frac{1}{d_J-1}}
\right)
\le g(t,x,y)
\\
\le
C_{3}\, t^{-d_s/2} 
\exp\!\left(
- C_{4}
\left(
\frac{|x-y|^{d_w}}{t}
\right)^{\!\frac{1}{d_J-1}}
\right),
\qquad
t>0,\;\; x,y\in \mathcal K^{\langle\infty\rangle}.
\end{multline}
Here $d_w$ denotes the walk dimension of $\mathcal K^{\langle\infty\rangle}$, 
$d_s = 2d/d_w$ its spectral dimension, and $d_J>1$ the so-called \emph{chemical exponent}. 
These bounds hold under mild structural assumptions, which are always satisfied in our case 
(see the comments following \cite[Lemma A.2]{bib:KOP}). The densities $g(t,x,y)$ are jointly continuous, symmetric and satisfy the scaling property
\[
g(t,x,y)
=
L^{d}\, g(L^{d_w}t, Lx, Ly),
\qquad
t>0.
\]

\subsubsection{Reflected Brownian motion on $\mathcal K^{\langle M \rangle}$.}
This process has been defined in \cite{bib:KOP}.
When $\mathcal K^{\langle\infty\rangle}$ is a fractal  which has the GLP, then for an arbitrary $M\in\mathbb Z$ the reflected Brownian motion $(Z_t^M, \mathbf{P}^{x}_{M})_{t \geq 0, \, x \in \mathcal{K}^{\left\langle M\right\rangle}}$ with values in $\mathcal{K}^{\left\langle M\right\rangle}$ is defined canonically by
\begin{equation}\label{eq:process-def}
Z_t^M = \pi_M(Z_t),
\end{equation}
where $\pi_M: \mathcal{K}^{\left\langle \infty  \right\rangle} \to \mathcal{K}^{\left\langle M\right\rangle}$ is the projection from \eqref{eq:piem}. 
Its symmetric transition probability density function $g_M(t,x,y): (0,\infty) \times \mathcal{K}^{\left\langle M \right\rangle} \times \mathcal{K}^{\left\langle M\right\rangle} \to (0,\infty)$ is given by
\begin{equation}
\label{eq:refldens}
g_{M}\left(t,x,y\right)= \left\{ \begin{array}{ll}
\displaystyle\sum_{y'\in \pi_{M}^{-1} (y)}{ g(t,x,y')} & \textrm{if } y \in \mathcal{K}^{\left\langle M\right\rangle} \backslash \cV_{M}^{\left\langle M\right\rangle}, \\
\displaystyle\sum_{y'\in \pi_{M}^{-1} (y)}{g(t,x,y')} \cdot \textrm{rank}_M(y') & \textrm{if } y \in \cV_{M}^{\left\langle M\right\rangle}, \\
\end{array}\right.
\end{equation}
where $\textrm{rank}_M(y')$ is the number of $M$-complexes meeting at the point $y'{\in \cV_M^{\langle \infty\rangle}}$. See \cite{bib:KOP} for details.

\subsubsection{Subordinate processes:\ the Brownian motion and the reflected Brownian motion}
Let $S=(S_t)_{t \geq 0}$ be a \emph{subordinator} on a probability space $(\Omega_0, \mathcal{F}, \mathcal{P})$, 
i.e., a nondecreasing L\'evy process taking values in $[0,\infty)$. 
The laws of $S_t$, denoted by $\eta_t(\mathrm{d}u) := \mathcal{P}(S_t \in \mathrm{d}u)$,  $t \ge 0$,
form a convolution semigroup of probability measures on $[0,\infty)$, uniquely determined by the Laplace transform
\begin{align}\label{eq:laplace}
\int_{[0,\infty)} {\rm e}^{-\lambda u}\, \eta_t(\mathrm{d}u) = {\rm e}^{-t \phi(\lambda)}, \quad \lambda>0.
\end{align}
The Laplace exponent $\phi$ is a \emph{Bernstein function} with $\phi(0+)=0$ which can be represented as
\begin{align} \label{eq:def_Phi}
\phi(\lambda) = b \lambda + \int_{(0,\infty)} \bigl(1 - {\rm e}^{-\lambda u}\bigr) \rho(\mathrm{d}u),
\end{align}
where $b \ge 0$ and $\rho$ is a nonnegative Radon measure on $(0,\infty)$ satisfying 
$\int_{(0,\infty)} (u \wedge 1)\, \rho(\mathrm{d}u) < \infty$. 
The constant $b$ and the measure $\rho$ are called the \emph{drift term} and the \emph{Lévy measure} of $S$, respectively. 
For further details, we refer to the monograph \cite{bib:SSV}.

\medskip

Throughout, we will assume the following.

	\begin{itemize}
\item[\bfseries(\namedlabel{B}{{\bf B}})] (\textbf{Bernstein function})
    \begin{enumerate}
    \item[a)] [Hartman-Wintner type condition] One has
		 \begin{equation}\label{eq:phi_at_infinity}
	\lim_{\lambda \to \infty} \frac{\phi(\lambda)}{\log \lambda} = \infty;
\end{equation}
		\medskip
    \item[b)]
   There exist $\alpha\in (0,d_w]$ and $C_1, C_2, \lambda_0 >0$ such that
		\begin{equation}\label{eq:phi_at_zero}
	C_1 \lambda^{\alpha/d_w}\leq 	 \phi(\lambda) \leq C_2 \lambda^{\alpha/d_w},\quad\lambda\in(0,\lambda_0].
	\end{equation}
    \end{enumerate}
\end{itemize}	

	\bigskip
\noindent	
Part a) of this condition is of technical nature, while b) is structural.

From \eqref{eq:phi_at_infinity} it follows that $\lim_{\lambda \to \infty} \phi(\lambda) = \infty$, and therefore either $b>0$ or $\int_{(0,\infty)} \rho({\rm d}u) = \infty$. In particular, from \eqref{eq:laplace} we have $\eta_t(\left\{0\right\})=0$, for every $t>0$.

    Typical examples of Bernstein functions (and the corresponding subordinators) that satisfy assumption \eqref{B} are:\\  
		$\phi(\lambda)=b \lambda$, $ b>0$ (\emph{pure drift}), \\
		$\phi(\lambda)=\lambda^{\alpha/d_w}$, $\alpha \in (0,d_w)$ (\emph{$\alpha/d_w$-stable subordinator}), \\
    $\phi(\lambda)=(\lambda+m^{d_w/\vartheta})^{\vartheta/d_w}-m$, $\vartheta \in (0,d_w)$, $m>0$ (\emph {relativistic $\vartheta/d_w$-stable subordinator}),
    and \\ $\phi(\lambda)=\sum_{i=1}^n \lambda^{\alpha_i/d_w}$, $\alpha_i \in (0,d_w]$, $n \in \left\{2,3,\ldots \right\}$ (\emph{mixture of stable subordinators}). \\ Further examples can be found e.g.\ in the monograph \cite{bib:SSV}.

\bigskip

Given a subordinator $S$ with Laplace exponent $\phi$ satisfying \eqref{B}, 
we define the \emph{subordinate Brownian motion} $X = (X_t)_{t \ge 0}$ 
and the \emph{subordinate reflected Brownian motion} 
$X^M = (X^M_t)_{t \ge 0}$ by
\[
X_t := Z_{S_t}, \qquad t \ge 0,
\]
and
\[
X^M_t := Z^M_{S_t}, \qquad t \ge 0,
\]
respectively.
For simplicity, we retain the notation $\mathbf P^x$, $x \in \mathcal K^{\langle\infty\rangle}$, and 
$\mathbf P_M^x$, $x \in \mathcal K^{\langle M\rangle}$, for the probability measures of these processes, 
as in the case of the diffusions $Z$ and $Z^M$ (here $x$ denotes the starting point). 
The corresponding expectations are denoted by $\ex^x$ and $\ex_M^x$, respectively.

By the general theory of subordination (see, e.g., \cite[Chapters 5 and 13]{bib:SSV}), 
the processes $X$ and $X^M$ are Feller processes with c\`adl\`ag paths.

Since $\eta_t(\{0\})=0$ for $t>0$, both $X$ and $X^M$ admit symmetric transition probability densities given by
\[
p(t,x,y)
=
\int_0^\infty g(u,x,y)\,\eta_t(\mathrm du),
\qquad
t>0,\;\; x,y \in \mathcal K^{\langle\infty\rangle},
\]
and
\begin{align}\label{eq:subord_pM}
p_M(t,x,y)
=
\int_0^\infty g_M(u,x,y)\,\eta_t(\mathrm du),
\qquad
t>0,\;\; x,y \in \mathcal K^{\langle M\rangle},
\end{align}
respectively. By exchanging the order of integration and summation, we further obtain
\begin{equation}
\label{eq:subrefldens}
p_{M}\left(t,x,y\right)= \left\{ \begin{array}{ll}
\displaystyle\sum_{y'\in \pi_{M}^{-1} (y)}{ p(t,x,y')} & \textrm{if } y \in \mathcal{K}^{\left\langle M\right\rangle} \backslash \cV_{M}^{\left\langle M\right\rangle}, \\
\displaystyle\sum_{y'\in \pi_{M}^{-1} (y)}{p(t,x,y')} \cdot \textrm{rank}_M(y') & \textrm{if } y \in \cV_{M}^{\left\langle M\right\rangle} \\
\end{array}\right.
\end{equation}
(recall that $\textrm{rank}_M(y')$ denotes the number of $M$-complexes meeting at the point $y'{\in \cV_M^{\langle \infty\rangle}}$).
Both densities $p(t,x,y)$ and $p_M(t,x,y)$ are bounded for every fixed $t>0$. 
Moreover, they inherit the continuity properties of the densities $g(t,x,y)$ and $g_M(t,x,y)$. These properties are proved in \cite[Lemma 2.1]{bib:HB-KK-MO-KPP}. Consequently, the processes $X$ and $X^M$ are strong Feller.

Several results in the subsequent sections rely on properties of bridge measures 
associated with the subordinate processes. 

For $t>0$ and $x,y \in \mathcal K^{\langle \infty \rangle}$, 
we denote by $\mathbf P^{x,y}_{t}$ a version of the conditional law of 
$(X_s)_{0 \le s \le t}$ under $\mathbf P^x$, given $X_t = y$. 
This is a probability measure on $D([0,t], \mathcal K^{\langle \infty \rangle})$ 
characterized by the disintegration formula
\begin{equation}\label{eq:bridge}
p(t,x,y)\mathbf{P}^{x,y}_{t}(A) 
= \mathbf{E}^x\!\left[\mathbf{1}_A \, p(t-s,X_s,y)\right],
\end{equation}
valid for every $0<s<t$ and every $A \in \sigma(X_u : u \le s)$.

For a precise construction and further properties of Markovian bridges 
for general Feller processes we refer to~\cite{bib:CU}.

Similarly, for the reflected processes $X^M$ we consider the bridge measures 
$\mathbf{P}^{x,y}_{M,t}$, defined for $M \in \mathbb Z_+$, $t>0$, 
and $x,y \in \mathcal K^{\langle M \rangle}$, 
as probability measures on $D([0,t], \mathcal K^{\langle M \rangle})$ 
satisfying
\begin{equation}\label{eq:Mbridge}
p_M(t,x,y)\mathbf{P}^{x,y}_{M,t}(A)
= \mathbf{E}^x\!\left[\mathbf{1}_A \, p_M(t-s,X^M_s,y)\right],
\end{equation}
for every $0 \le s < t$ and every 
$A \in \sigma(X^M_u : u \le s)$.

Expectations with respect to the measures 
$\mathbf{P}^{x,y}_{t}$ and $\mathbf{P}^{x,y}_{M,t}$ 
will be denoted by $\mathbf{E}^{x,y}_{t}$ and 
$\mathbf{E}^{x,y}_{M,t}$, respectively.

\subsection{Semigroups and generators of free processes} 
The transition semigroups of the processes under consideration form strongly 
continuous semigroups of self-adjoint contractions on the corresponding $L^2$-spaces. We now fix notation for 
these semigroups and their (negative definite) infinitesimal generators.

\begin{enumerate}
\item[(i)] For the Brownian motion $Z$ on $\mathcal K^{\langle\infty\rangle}$:\ the semigroup $(P_t)_{t\ge 0}$ on $L^2(\cK^{\langle \infty\rangle},\mathfrak{m})$ with generator $\mathcal L$.
\item[(ii)] For the reflected Brownian motion $Z^M$ on $\mathcal K^{\langle M \rangle}$, $M\in\mathbb Z$:\ the semigroup $(P_t^M)_{t\ge 0}$ on $L^2(\cK^{\langle M \rangle}, \mathfrak{m})$ with generator $\mathcal L^M$.
\item[(iii)] For the subordinate Brownian motion $X$ on $\mathcal K^{\langle\infty\rangle}$:\ the semigroup $(T_t)_{t\ge 0}$ on $L^2(\cK^{\langle \infty\rangle}, \mathfrak{m})$ with generator $-\phi(-\mathcal L)$.
\item[(iv)] For the reflected subordinate Brownian motion $X^M$ on $\mathcal K^{\langle M\rangle}$:\ the semigroup $(T_t^M)_{t\ge 0}$ on $L^2(\cK^{\langle M \rangle}, \mathfrak{m})$ with generator $-\phi(-\mathcal L^M)$.
\end{enumerate}
The operator $\mathcal L$ is the canonical Laplacian associated with the Brownian motion $Z$ on the fractal $\mathcal K^{\langle\infty\rangle}$, while $\mathcal L^M$ is the Neumann Laplacian defined on the complex $\mathcal K^{\langle M \rangle}$.
The notation in (iii) and (iv) indicates that the respective generators of the subordinate processes are in fact (minus) operator-monotone functions of the positive self-adjoint operators $-\mathcal L$ and $-\mathcal L^M$, which are defined via the spectral theorem.

We now briefly summarize the spectral properties of the operators in (ii) and (iv).
Since the semigroups $(P_t^M)_{t\ge 0}$ and $(T_t^M)_{t\ge 0}$ consist of Hilbert--Schmidt operators, 
the spectra of $-\mathcal L^M$ and $\phi(-\mathcal L^M)$ are purely discrete, and these operators admit complete sets of eigenfunctions. 

The eigenvalues of $-\mathcal L^M$ satisfy
\[
0 = \mu_1^M < \mu_2^M \le \mu_3^M \le \dots \rightarrow \infty,
\]
and they obey the scaling property
\begin{equation}\label{eq:ww_skalowanie}
\mu_k^M = L^{-Md_w} \, \mu_k^1, \qquad k = 1,2,3,\dots
\end{equation}

The eigenvalues of $\phi(-\mathcal L^M)$ are denoted by 
\[
\lambda_1^M < \lambda_2^M \le \lambda_3^M \le \dots \rightarrow \infty,
\]
and satisfy
\begin{equation}\label{eq:2.8}
\lambda_1^M = 0, \qquad 
\lambda_k^M = \phi(\mu_k^M) = \phi(L^{-Md_w} \, \mu_k^1), \quad k = 2,3,4,\dots
\end{equation}

The corresponding eigenfunctions form a complete orthonormal system in $L^2(\mathcal K^{\langle M\rangle}, \mathfrak{m})$, 
denoted by $(\psi_k^M)_{k\ge 1}$, with $\psi_1^M \equiv L^{-\frac{Md}{2}}$. 
Hence, we have
\begin{equation}\label{eq:2.9}
\phi(-\mathcal L^M)\psi_k^M = \lambda_k^M \psi_k^M, 
\qquad \text{and} \qquad
T_t^M \psi_k^M = {\rm e}^{-t \lambda_k^M} \psi_k^M.
\end{equation}

\subsection{Random Feynman--Kac semigroups and related Schr\"odinger operators} \label{subsec:FK}
In this paper, we consider the following class of random potentials. 

\smallskip

\begin{definition} \textbf{(Fractal Poisson-type potential)} \label{def:potendef} Let $W: \cK^{\langle \infty \rangle} \times \cK^{\langle \infty \rangle} \to [0,\infty)$ be a Borel function, and let $\mu^{\omega}$ be the random counting measure corresponding to a Poisson point process on $\cK^{\langle \infty \rangle}$ with intensity $\nu \textrm{d}\mathfrak{m}$, defined on a probability space $\left(\Omega, \cM, \mathbb{Q}\right)$, where $\nu >0$ is a given parameter. Then the random field
\begin{equation}
\label{eq:potendef}
V^{\omega}(x) := \int_{\cK^{\langle \infty \rangle}} W(x,y) \mu^{\omega}(\textrm{d}y),
\end{equation}
will be called a \emph{Poisson-type potential}. The two-argument profile function $W$ will be referred to as the \emph{single-site potential}.
\end{definition}

We assume that the Poisson point process and the subordinate Brownian motion are independent. 
Our techniques rely crucially on the periodic structure of the state space described in Section~\ref{sec:glp}.
In particular, our arguments involve the following \emph{periodized fractal Poisson-type potential}, defined by
\begin{equation} \label{eq:potendef-per}
V_{M}^{\omega}(x)
:=
\int_{\mathcal K^{\langle M\rangle}}
\sum_{y' \in \pi_M^{-1}(y)}
W(x,y') \, \mu^{\omega}(\mathrm dy),
\qquad M \in \mathbb Z_+ .
\end{equation}
This periodization can be described as follows:\ we first restrict the Poisson configuration to $\mathcal K^{\langle M\rangle}$, then extend it periodically to $\mathcal K^{\langle\infty\rangle}$, and finally attach the profile $W$ at all resulting points.

To proceed, we need that the potentials we consider belong to the Kato-classes of respective processes.
\begin{definition} \textbf{(Kato class and local Kato class)} \label{def:kato}
\begin{enumerate}
\item[(i)] We say that a measurable function 
$f:\mathcal K^{\langle\infty\rangle}\to\mathbb R$ 
belongs to the Kato class of the process $X$, denoted by $\mathcal J^X$, if
\begin{equation}\label{eq:kato-1}
\lim_{t\to 0}\sup_{x\in \mathcal K^{\langle\infty\rangle}}
\int_0^t T_s |f|(x)\,\mathrm ds = 0.
\end{equation}
The Kato class of the process $X^M$, denoted by $\mathcal J^{X^M}$, 
is defined analogously, with $\mathcal K^{\langle\infty\rangle}$ replaced by 
$\mathcal K^{\langle M\rangle}$ and $T_t$ replaced by $T_t^M$.

\item[(ii)] A measurable function 
$f:\mathcal K^{\langle\infty\rangle}\to\mathbb R$ 
belongs to the local Kato class of the process $X$, denoted by $\mathcal J_{\mathrm{loc}}^X$, 
if for every $M$-complex $\Delta$, the function 
$f\,\mathbf 1_{\Delta}$ belongs to $\mathcal J^X$.
\end{enumerate}
\end{definition}
\noindent
Since $\mathcal K^{\langle M\rangle}$ is compact, the local Kato class for the process $X^M$ need not be introduced.

\bigskip

We now impose general regularity assumptions on the single-site potential $W$ under which the random potentials considered are well defined and the corresponding Schr\"odinger operators are sufficiently regular. In particular, assumption \eqref{W1} ensures the existence of the IDS in our present setting.

\medskip

	\begin{itemize}
\item[\bfseries(\namedlabel{W1}{{\bf W1}})] (\textbf{Single-site potential})
    \begin{enumerate}
    \item[a)]
    $W(\cdot,y) \in \cJ^{X}_{\textrm{loc}}$ for every $y \in \cK^{\langle \infty \rangle}$ (cf.\ Definition \ref{def:kato}) and there exists a function $h \in L^{1}(\cK^{\langle \infty \rangle}, \mathfrak{m})$ such that for any $M\in\mathbb Z,$ if $x \in \cK^{\langle M \rangle}$ and $y \notin \cK^{\langle M+1 \rangle}$ then $W(x,y) \leq h(y).$
    \medskip
   \item[b)]
   $\sum_{M=1}^{\infty} \sup_{x \in \cK^{\langle \infty \rangle}}  \int_{\cK^{\langle \infty \rangle} \setminus \cC_{\lfloor M/4 \rfloor}(x)} W(x,y){\rm d}\mathfrak{m}(y) < \infty   $;
	\medskip
    \item[c)]
		For sufficiently large $M \in \mathbb{Z}_{+}$,
\begin{equation}
\label{eq:a3cond}
\sum_{y' \in \pi_M^{-1}(\pi_M(y))} W\left(\pi_M(x),y'\right) \leq \sum_{y' \in \pi_M^{-1}(\pi_M(v))} W\left(\pi_{M+1}(x),y'\right), \quad x, y \in \cK^{\langle \infty \rangle}. 
\end{equation}
    \end{enumerate}
\end{itemize}

\medskip

The profile function $W(x,y)$ describes the strength of the influence of a single Poisson site located at $y$ on a particle at the point $x$. 
Conditions (\ref{W1}.ab) are regularity assumptions, whereas condition (\ref{W1}.c) is of structural importance. 
It states that for every fixed $x,y \in \mathcal K^{\langle \infty \rangle}$, the influence of the sites in 
$\pi_M^{-1}(\pi_M(y))$ on a particle located at $\pi_M(x) \in \mathcal K^{\langle M \rangle}$ 
is, on average, smaller than the influence at the position $\pi_{M+1}(x)$.

The following proposition shows that under condition~\eqref{W1}, the potentials $V^\omega$ and $V_M^\omega$, defined in 
\eqref{eq:potendef} and \eqref{eq:potendef-per}, are well defined and belong to the local Kato class for almost every $\omega \in \Omega$.

\begin{proposition}
\label{pro:katoclass}
Let $V^\omega$ be the Poissonian potential \eqref{def:potendef} whose profile $W$ satisfies \eqref{W1}, and let $V^\omega_M$ be its periodization \eqref{eq:potendef-per}, $M\in\mathbb Z.$ Then, $\mathbb Q-$almost surely:\
\begin{itemize}
\item[(a)] $V^\omega \in \mathbb \cJ_{loc}^X$;
\item[(b)] $V^\omega_M\in \mathbb \cJ_{loc}^X$;
\item[(c)] $V^\omega_M\in \mathbb \cJ^{X^M}.$
\end{itemize}
\end{proposition}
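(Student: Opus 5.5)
The plan is to split the potential into a near part, generated by finitely many Poisson points, and a far part that is uniformly bounded. Then (a) follows from (W1.a) together with linearity of the Kato class, and (b), (c) follow from the same decomposition applied to the periodized configuration.

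\emph{Step 1 (a).} Fix an $M$-complex $\Delta$. Since $\mathcal K^{\langle\infty\rangle}=\bigcup_M\mathcal K^{\langle M\rangle}$ and the complexes are nested, $\Delta\subset\mathcal K^{\langle M'\rangle}$ for some $M'$. For $x\in\Delta$ we write
\[
V^\omega(x)=\int_{\mathcal K^{\langle M'+1\rangle}}W(x,y)\,\mu^\omega(\mathrm dy)+\int_{\mathcal K^{\langle\infty\rangle}\setminus\mathcal K^{\langle M'+1\rangle}}W(x,y)\,\mu^\omega(\mathrm dy)=:V_1^\omega(x)+V_2^\omega(x).
\]
\begin{itemize}
\item Since $\mathfrak m(\mathcal K^{\langle M'+1\rangle})<\infty$, the measure $\mu^\omega$ has almost surely finitely many atoms $y_1,\dots,y_n$ in $\mathcal K^{\langle M'+1\rangle}$. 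Hence $V_1^\omega\mathbf 1_\Delta=\sum_i W(\cdot,y_i)\mathbf 1_\Delta$ is a finite sum of elements of $\mathcal J^X$ by (W1.a), and $\mathcal J^X$ is a cone.
\item By (W1.a), for $x\in\Delta$ we have $V_2^\omega(x)\le\int h\,\mathrm d\mu^\omega$. Moreover $\mathbb E\int h\,\mathrm d\mu^\omega=\nu\|h\|_{L^1}<\infty$, so this bound is finite almost surely and independent of $x$.
\item A bounded function $f$ lies in $\mathcal J^X$ trivially, since $\int_0^tT_s|f|\,\mathrm ds\le t\|f\|_\infty$.
\end{itemize}
Taking a countable intersection over all complexes (there are countably many) gives a single full-measure event on which (a) holds.

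\emph{Step 2 (b), (c).} By definition, $V_M^\omega(x)=\sum_{y_i\in\mathcal K^{\langle M\rangle}}\sum_{y'\in\pi_M^{-1}(y_i)}W(x,y')$, where the outer sum is almost surely finite. It therefore suffices to show that for each fixed $y$ the function $U_y(x):=\sum_{y'\in\pi_M^{-1}(y)}W(x,y')$ is locally Kato for $X$, and that its restriction to $\mathcal K^{\langle M\rangle}$ is Kato for $X^M$.

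For (b), fix $\Delta\subset\mathcal K^{\langle M'\rangle}$. Only finitely many $y'\in\pi_M^{-1}(y)$ lie in $\mathcal K^{\langle M'+1\rangle}$, and each contributes a Kato term. For the remaining $y'$ we have $W(x,y')\le h(y')$, and we need $\sum_{y'\notin\mathcal K^{\langle M'+1\rangle}}h(y')<\infty$. Pointwise summability of $h$ over a lattice-like orbit does not follow from $h\in L^1$. I would handle this either by averaging, showing the bound holds for $\mathfrak m$-a.e.\ $y$ via $\int_{\mathcal K^{\langle M\rangle}}\sum_{y'}h(y')\,\mathrm d\mathfrak m(y)=\|h\|_{L^1}$, or by using (W1.b) to control the far tail uniformly. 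Since the atoms $y_i$ are almost surely distinct from any fixed $\mathfrak m$-null set, the averaged version suffices almost surely. This yields (b).

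For (c), the key identity is $T^M_s f(x)=T_s(f\circ\pi_M)(x)$ for $x\in\mathcal K^{\langle M\rangle}$, which follows from \eqref{eq:subrefldens} by a change of variables. Note that $U_y$ is $\pi_M$-periodic only after summation over the orbit, so I would take $f=V_M^\omega|_{\mathcal K^{\langle M\rangle}}$ and check that $V_M^\omega=f\circ\pi_M$ up to the symmetry supplied by the GLP. Then $\sup_{x\in\mathcal K^{\langle M\rangle}}\int_0^tT^M_s|f|\,\mathrm ds\le\sup_{x\in\mathcal K^{\langle M\rangle}}\int_0^tT_s(V_M^\omega\mathbf 1_{\mathcal C})\,\mathrm ds$ plus a bounded remainder. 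Here I would split $V^\omega_M$ into its part on a fixed neighbourhood $\mathcal C$ of $\mathcal K^{\langle M\rangle}$, which is Kato by (b), and its part outside, which is bounded by the same $h$-argument.

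\textbf{Main obstacle.} I expect the delicate step to be the uniform (in $x$) summability of the periodized far contributions $\sum_{y'}W(x,y')$ in (b) and (c). I would first try combining (W1.a) with the averaging over $y$ against $\mathfrak m$; if that fails, I would fall back on (W1.b) summed over the scales $\lfloor M/4\rfloor$.
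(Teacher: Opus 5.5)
\textbf{Parts (a) and (b) are sound; the gap is in (c).}

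Parts (a) and (b) follow the paper. The split at $\mathcal K^{\langle M'+1\rangle}$ using (W1.a) is exactly the paper's argument. Your averaging remark in (b) also works: $\int_{\mathcal K^{\langle M\rangle}}\sum_{y'\in\pi_M^{-1}(y)}h(y')\,\mathfrak m({\rm d}y)=\|h\|_{L^1}$, so the orbit sum is finite off an $\mathfrak m$-null set, and a.s.\ no Poisson point lies in that set. This is equivalent to the paper's Campbell computation $\mathbb E^{\mathbb Q}\int h\,{\rm d}\mu^\omega_M=\nu\|h\|_{L^1}$, and (W1.b) is not needed.

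In (c) the identity $T^M_sf(x)=T_s(f\circ\pi_M)(x)$ is correct. You then need $\sup_{x\in\mathcal K^{\langle M\rangle}}\int_0^tT_s(|f|\circ\pi_M)(x)\,{\rm d}s\to0$, and both of your reductions fail.

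First, $V^\omega_M\neq f\circ\pi_M$ in general. Here $V^\omega_M(z)=\sum_i\sum_{y'\in\pi_M^{-1}(y_i)}W(z,y')$, while $f(\pi_M(z))=\sum_i\sum_{y'}W(\pi_M(z),y')$. Nothing in (W1) makes $W(\cdot,y')$ invariant under the foldings: the GLP symmetrizes the periodized cloud, not the profile. The maps $\pi_M|_\Delta$ are not restrictions of global symmetries of $\mathcal K^{\langle\infty\rangle}$, and (W1.c) is only a one-sided comparison between levels $M$ and $M+1$. So your near part is not $T_s(V^\omega_M\mathbf 1_{\mathcal C})$, and (b) does not apply to it.

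Second, even for a genuinely periodic function, the part outside $\mathcal C$ is not bounded. The function $(f\circ\pi_M)\mathbf 1_{\mathcal C^c}$ reproduces every local singularity of $f$ in each $M$-complex. Your $h$-argument localizes the \emph{source}: it bounds $W(x,y')$ for $x$ in a fixed bounded set and $y'$ far away. Here it is the \emph{position} $z$ of the free process that is far, and periodized sources sit right next to it.

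\textbf{What is missing} is a comparison between the reflected and free kernels on $\mathcal K^{\langle M\rangle}$. The paper uses
$p_M(s,x,y)\le c_1\big(p(c_2s,x,y)+L^{-Md}\big)$ for $x,y\in\mathcal K^{\langle M\rangle}$,
obtained from the Brownian estimate \cite[Theorem 3.1]{bib:O} by subordination. It says that all folded copies of $y$ together contribute no more than the free kernel at $y$ plus the equilibrium level $L^{-Md}$. With it, $\int_0^tT^M_s|f|(x)\,{\rm d}s$ is bounded by two terms:
\begin{itemize}
\item a free-process Kato integral of $V^\omega_M\mathbf 1_{\mathcal K^{\langle M\rangle}}$, which tends to $0$ uniformly by (b);
\item $c_1tL^{-Md}\int_{\mathcal K^{\langle M\rangle}}V^\omega_M\,{\rm d}\mathfrak m$, which is finite because locally Kato functions are locally integrable.
\end{itemize}
This needs neither periodicity of $V^\omega_M$ nor any tail summation.

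Without this estimate you would have to derive two things from the sub-Gaussian bounds yourself. One is the transfer of the Kato property through the local isometries $\widetilde\pi_\Delta$. The other is a summable off-diagonal bound over $M$-complexes. Together these amount to reproving the estimate.

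So the obstacle you flagged in (b) is already handled by your averaging argument. The real difficulty is in (c), and your proposal does not resolve it.
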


\begin{proof} 
\noindent 
(a) First observe that
\[
\mathbb{E}^\mathbb{Q} \int_{\mathcal K^{\langle \infty\rangle}} 
h(y)\,\mu^\omega({\rm d}y)
= \nu \int_{\mathcal K^{\langle \infty\rangle}} 
h(y)\, \mathfrak{m}({\rm d}y)
= \nu \|h\|_{L^1}
< \infty.
\]
Consequently, there exists a set $\Omega' \subset \Omega$ of full probability such that 
for every $\omega \in \Omega'$ we have $h \in L^1(\mathcal K^{\langle \infty\rangle}, \mu^\omega)$.
For $n \in \mathbb Z_+$ define
\[
\Omega_n 
= \left\{ \omega \in \Omega : 
\text{only finitely many points fall into } 
\mathcal K^{\langle n\rangle} \right\}.
\]
Then $\mathbb Q(\Omega_n) = 1$ for every $n \in \mathbb Z_+$, 
and hence the set 
\[
\widetilde{\Omega} := \Omega' \cap \bigcap_{n \in \mathbb Z_+} \Omega_n
\]
also has full measure. 
For $\omega \in \widetilde{\Omega}$, we denote by $\{y_i(\omega)\}$ the realization of the Poisson cloud.

\smallskip

Let $\omega\in \widetilde{\Omega}.$ It is enough to show that condition \eqref{eq:kato-1} holds for $V^\omega\mathbf 1_{\mathcal K^{\langle n\rangle}},$ for every $n\in\Z_{+}.$
For $x\in \cK^{\langle \infty\rangle}$ we have
\[V^\omega(x)\mathbf 1_{\cK^{\langle n\rangle}}(x) \leq
\mathbf 1_{\cK^{\langle n\rangle}}(x)
\left(\sum_{y_i(\omega)\in \cK^{\langle n+1\rangle}} W(x, y_i(\omega)) +
\int_{(\cK^{\langle n+1\rangle})^c} h(y)\mu^\omega({\rm d}y)\right)\]
and further
\begin{align*}
\int_0^t & T_s(V^\omega\mathbf 1_{\cK^{\langle n\rangle}})(x)\,{\rm d}s = \int_0^t \mathbf E^x \left(V^\omega(X_s)\mathbf 1_{\cK^{\langle n\rangle}}(X_s)\right){\rm d}s\\
&\leq \int_0^t \mathbf E^x\left(\mathbf 1_{\cK^{\langle n\rangle}}(X_s)
\left(\sum_{y_i(\omega)\in \cK^{\langle n+1\rangle}} W(X_s, y_i(\omega)) +
\int_{(\cK^{\langle n+1\rangle})^c} h(y)\mu^\omega({\rm d}y)\right)\right)\\
&= \sum_{y_i(\omega)\in \cK^{\langle n+1\rangle}} \int_0^t T_s(W(\cdot,y_i(\omega))\mathbf 1_{\cK^{\langle n\rangle}})(x){\rm d}s
+ t\int_{(\cK^{\langle n+1\rangle})^c} h(y)\mu^\omega({\rm d}y).
\end{align*}
As we have assumed that $W(\cdot,y)\in\cJ_{loc}^X,$ each of the integrals in the first sum goes to zero as $t\to 0,$ uniformly over $x.$ Since there are finitely many terms, entire sum goes to 0 uniformly over $x$ as well. The second term does not depend on $x,$ and can be estimated by $t\|h\|_{L^1(\cK^{\langle \infty\rangle},\mu^\omega)}.$ The assertion follows.

\smallskip
\noindent
(b) The argument is essentially the same; the only additional step is to analyze 
the configuration periodized with respect to $\pi_M$. 
Denote by
\[
\{y_i^M(\omega)\}
:= \pi_M^{-1}\!\left(\{y_i(\omega)\} \cap \mathcal K^{\langle M\rangle}\right)
\]
the realization of the periodized cloud and set 
$\mu_M^\omega := \mu^\omega \circ \pi_M$. Then
\begin{align*}
\mathbb{E}^\mathbb{Q} \int_{\mathcal K^{\langle \infty\rangle}} 
h(y)\,\mu_M^\omega({\rm d}y)
&= \mathbb{E}^\mathbb{Q} \int_{\mathcal K^{\langle M\rangle}} 
\sum_{y' \in \pi_M^{-1}(y)} h(y')\,\mu^\omega({\rm d}y) \\
&= \nu \int_{\mathcal K^{\langle M\rangle}} 
\sum_{y' \in \pi_M^{-1}(y)} h(y')\,\mathfrak{m}({\rm d}y)
= \nu \int_{\mathcal K^{\langle \infty\rangle}} 
h(y)\, \mathfrak{m}({\rm d}y)
= \nu \|h\|_{L^1}
< \infty.
\end{align*}
Hence we may also assume that $h \in L^1(\mathcal K^{\langle M\rangle}, \mu_M^\omega)$ for $\omega \in \widetilde{\Omega}$.

Now let $\omega \in \widetilde{\Omega}$, $n \ge M$, and 
$x \in \mathcal K^{\langle \infty\rangle}$. Then
\begin{align*}
\mathbf 1_{\mathcal K^{\langle n\rangle}}(x) V_M^\omega(x)
&= \mathbf 1_{\mathcal K^{\langle n\rangle}}(x)
\int_{\mathcal K^{\langle M\rangle}}
\sum_{y' \in \pi_M^{-1}(y)} W(x,y')\,\mu^\omega({\rm d}y) \\
&= \mathbf 1_{\mathcal K^{\langle n\rangle}}(x)
\int_{\mathcal K^{\langle \infty\rangle}}
W(x,y)\,\mu_M^\omega({\rm d}y) \\
&\leq
\mathbf 1_{\mathcal K^{\langle n\rangle}}(x)
\left(
\sum_{y_i^M(\omega) \in \mathcal K^{\langle n+1\rangle}}
W(x,y_i^M(\omega))
+
\int_{(\mathcal K^{\langle n+1\rangle})^c}
h(y)\,\mu_M^\omega({\rm d}y)
\right).
\end{align*}
The conclusion now follows exactly as in part~(a).

\smallskip
\noindent
(c) By \cite[Theorem 3.1]{bib:O} and the subordination formula \eqref{eq:subord_pM}, there are constants $c_1, c_2>0$ such that
\[
p_M(s,x,y) \leq c_1\big(p(c_2s,x,y) + L^{-Md}\big), \quad x ,y \in \cK^{\langle M\rangle}, \ s>0, \ M \in \Z_{+}. 
\]
Hence, 
\begin{eqnarray*}
\int_0^t T_s^M V^\omega_M(x){\rm d}s \leq c_1 \int_0^tT_s V^\omega_M(x){\rm d}s + c_1 t L^{-Md} \int_{\cK^{\langle M\rangle}}V_M^\omega(y)\mathfrak{m}({\rm d}y).
\end{eqnarray*}
The first summand on the right hand side goes to zero uniformly over $x\in\cK^{\langle M\rangle}$ due to (b), and the second - because $V_M^\omega\in \cJ_{loc}^X\subset L^1_{loc}(\cK^{\langle\infty\rangle},\mathfrak{m}),$ almost surely. The proof of the proposition is complete.

\end{proof}

Given a fractal  Poisson-type potential $V^{\omega}$ which belongs almost surely to the local Kato class $\mathcal J_{loc}^X$, 
we define the random Feynman--Kac semigroup $\{T^{V^\omega}_t : t \ge 0\}$ of the subordinate Brownian motion $X$ by
\[
T^{V^\omega}_t f(x) =  \mathbf{E}^x \left[{\rm e}^{-\int_0^t V^{\omega}(X_s) {\rm d}s}f(X_t) \right], \quad f \in L^2(\cK^{\langle \infty\rangle}, \mathfrak{m}), \ \ t>0.
\]
This semigroup is strongly continuous on $L^2(\mathcal K^{\langle \infty\rangle}, \mathfrak{m})$ and consists of self-adjoint operators.
It is well known that $T^{V^\omega}_t = {\rm e}^{-tH^{\omega}}$, where 
\[
H^{\omega} = \phi(-\mathcal L) + V^{\omega}
\]
is the random Schr\"odinger operator associated with the generator of the subordinate Brownian motion $X$ on $\mathcal K^{\langle \infty\rangle}$. 
Here $\phi$ denotes the Laplace exponent of the subordinator $S$, and $\mathcal L$ is the generator of the free Brownian motion on the USNF. 
A standard reference on Schr\"odinger operators generated by Feller processes and their Feynman--Kac semigroups is the monograph~\cite{bib:DC} by Demuth and van Casteren (see also \cite[Chapters~3.2--3.3]{bib:CZ}).

We also define random Feynman--Kac semigroups 
$\{T_t^{D,M,V^\omega} : t\ge 0\}$ and 
$\{T_t^{N,M,V^\omega} : t\ge 0\}$ 
corresponding, respectively, to the process $X$ killed upon exiting the complex $\mathcal K^{\langle M\rangle}$ 
and to the reflected process $X^M$ evolving in $\mathcal K^{\langle M\rangle}$. 
More precisely,
\[
T_t^{D,M,V^\omega} f(x) = \mathbf{E}^x \left[{\rm e}^{-\int_0^t V^{\omega}(X_s) {\rm d}s}f(X_t); t<\tau_{\cK^{\langle M\rangle}} \right], \quad f \in L^2(\cK^{\langle M\rangle}, \mathfrak{m}), \ M \in \mathbb{Z}_+, \ t>0,
\]
and
\[
T_t^{N,M,V^\omega} f(x) = \mathbf{E}^x_M \left[{\rm e}^{-\int_0^t V^{\omega}(X^M_s) {\rm d}s}f(X^M_t) \right], \quad f \in L^2(\cK^{\langle M\rangle}, \mathfrak{m}), \ M \in \mathbb{Z}_+, \ t>0.
\]
Here $\tau_{\mathcal K^{\langle M\rangle}} = \inf \{ t \ge 0 : X_t \notin \mathcal K^{\langle M\rangle} \}$ denotes the first exit time of the process from $\mathcal K^{\langle M\rangle}$.

Denote by $A^{D,M,V^\omega}$ and $A^{N,M,V^\omega}$ the $L^2$-generators of the semigroups 
$\{T_t^{D,M,V^\omega}:t \geq 0\}$ and $\{T_t^{N,M,V^\omega}:t \geq 0\}$, respectively. 
As mentioned above, killing and reflecting the process correspond to imposing 
\emph{Dirichlet} and \emph{Neumann boundary conditions} on the generator, respectively. 
Therefore, the (positive definite) finite-volume operators
\begin{equation}\label{eq:generators}
H_{M}^{D,V^\omega} := - A^{D,M,V^\omega}, 
\qquad 
H_{M}^{N,V^\omega} := - A^{N,M,V^\omega},
\end{equation}
may be viewed as Schr\"odinger operators associated with the Dirichlet 
(resp.\ Neumann) realizations of the generator of the initial subordinate Brownian motion $X$. 
In particular, by \cite[Theorem~2.5]{bib:DC},
\[
H_{M}^{N,V^\omega} = \phi(-\mathcal L^M) + V^\omega.
\]

The operators $T_t^{D,M,V^\omega}$ and $T_t^{N,M,V^\omega}$, $t>0$, are integral operators. 
For every $t>0$ there exist symmetric, bounded kernels 
$u_{M,\omega}^D(t,x,y)$ and $u_{M,\omega}^N(t,x,y)$ such that
\begin{equation*}
T_t^{D,M,V^\omega} f(x) 
= \int_{\mathcal K^{\langle M \rangle}} 
u_{M,\omega}^D(t,x,y)\, f(y)\, \mathfrak{m}({\rm d}y),
\qquad f \in L^2(\mathcal K^{\langle M\rangle}, \mathfrak{m}),
\end{equation*}
and
\begin{equation*}
T_t^{N,M,V^\omega} f(x) 
= \int_{\mathcal K^{\langle M \rangle}} 
u_{M,\omega}^N(t,x,y)\, f(y)\, \mathfrak{m}({\rm d}y),
\qquad f \in L^2(\mathcal K^{\langle M\rangle}, \mathfrak{m}).
\end{equation*}
These kernels admit the following bridge representations:
\begin{equation*}
u_{M,\omega}^D(t,x,y) 
= p(t,x,y)\,
\mathbf{E}_{t}^{x,y} \!\left[
\exp\!\left(-\int_0^t V^{\omega}(X_s)\,{\rm d}s\right);
\, t<\tau_{\mathcal K^{\langle M\rangle}}
\right],
\end{equation*}
and
\begin{equation*}
u_{M,\omega}^N(t,x,y) 
= p_M(t,x,y)\,
\mathbf{E}_{M,t}^{x,y} \!\left[
\exp\!\left(-\int_0^t V^{\omega}(X^M_s)\,{\rm d}s\right)
\right].
\end{equation*}

Since for every $t>0$ these kernels are bounded and 
$\mathfrak{m}(\mathcal K^{\langle M\rangle}) < \infty$, 
the operators $T_t^{D,M,V^\omega}$ and $T_t^{N,M,V^\omega}$ are Hilbert--Schmidt. 
Consequently, the spectra of $H_{M}^{D,V^\omega}$ and $H_{M}^{N,V^\omega}$ 
consist of isolated eigenvalues of finite multiplicity, satisfying
\[
0 \le \lambda_1^{D,M,V^\omega} 
< \lambda_2^{D,M,V^\omega} 
\le \lambda_3^{D,M,V^\omega} 
\le \ldots \to \infty,
\]
and
\[
0 \le \lambda_1^{N,M,V^\omega} 
< \lambda_2^{N,M,V^\omega} 
\le \lambda_3^{N,M,V^\omega} 
\le \ldots \to \infty,
\]
respectively.

\subsection{Finite volume spectral measures, their Laplace transforms and existence of IDS} \label{sec:IDS_ex}
For a large class of fractals with well-defined periodic structures, 
namely USNF's with the GLP, one can establish the existence of the 
Integrated Density of States for the Schr\"odinger operators $H^{\omega}$. 
As usual, we consider the random empirical measures on $[0,\infty)$, built on the spectra of $H_{M}^{D,V^\omega}$ and $H_{M}^{N,V^\omega}$, normalized by $\mathfrak{m}(\cK^{\langle M\rangle})$:
\begin{equation}
\label{eq:lmd}
\Lambda_M^{D, V^\omega} := \frac{1}{\mathfrak{m}\left(\cK^{\langle M\rangle}\right)} \sum_{n=1}^{\infty} \delta_{\lambda_n^{D,M,V^\omega}}
\end{equation}
and
\begin{equation}
\label{eq:lmn}
\Lambda_M^{N, V^\omega} := \frac{1}{\mathfrak{m}\left(\cK^{\langle M\rangle}\right)} \sum_{n=1}^{\infty} \delta_{\lambda_n^{N,M,V^\omega}} \ .
\end{equation}
We show that these measures converge to a common limit $\Lambda$, which is a nonrandom measure on $[0,\infty)$. This limit is referred to as the \emph{Integrated Density of States} (IDS).

\begin{theorem} \label{thm:IDS}
Let $\cK^{\langle \infty \rangle}$ be an USNF with the GLP and let the assumptions \eqref{B} and \eqref{W1} hold.
Then the random measures $\Lambda_M^{D, V^\omega}$ and $\Lambda_M^{N, V^\omega}$ are $\mathbb{Q}$-almost surely
vaguely convergent as $M \to \infty$ to a common nonrandom limit measure $\Lambda$ on $[0,\infty)$.
\end{theorem}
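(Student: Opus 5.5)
Following the method of \cite{bib:KaPP2}, I will work with Laplace transforms. Vague convergence of measures on $[0,\infty)$ follows from pointwise convergence of their Laplace transforms for every $t>0$. So it suffices to prove that, $\mathbb Q$-a.s., for each $t>0$,
\[
L_M^{D}(t,\omega):=\int e^{-\lambda t}\,\Lambda_M^{D,V^\omega}({\rm d}\lambda)=\frac{1}{\mathfrak m(\cK^{\langle M\rangle})}\int_{\cK^{\langle M\rangle}}u^D_{M,\omega}(t,x,x)\,\mathfrak m({\rm d}x),
\]
and similarly $L_M^N(t,\omega)$ built from $u^N_{M,\omega}$, converge to a common deterministic limit $L(t)$. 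The a.s.\ statement then extends from a countable dense set of $t$ to all $t>0$ by monotonicity and continuity in $t$. The limit $L(t)$ is a completely monotone function, hence the Laplace transform of a measure $\Lambda$. The traces are finite because the kernels are bounded and $\mathfrak m(\cK^{\langle M\rangle})=L^{Md}$.

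\textbf{Step 1 (Neumann side via periodization).} By the GLP, $X^M=\pi_M(X)$. Then $u^N_{M,\omega}(t,x,x)$, written through the bridge of $X^M$ and the sum over $\pi_M^{-1}$ in \eqref{eq:subrefldens}, equals a sum over $x'\in\pi_M^{-1}(x)$ of $p(t,x,x')\,\mathbf E^{x,x'}_t[e^{-\int_0^t V^\omega(\pi_M X_s){\rm d}s}]$. I will compare $V^\omega\circ\pi_M$ with the periodized potential $V^\omega_M$ from \eqref{eq:potendef-per}, using (W1.c), which is designed to make the sequence monotone in $M$. The aim is that $\mathbb E^{\mathbb Q}L^N_M(t)$ (or a suitable majorant) is nonincreasing in $M$. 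I will then prove a.s.\ convergence via an ergodic/subadditive argument. The Poisson field is stationary with respect to the ``shifts'' $\pi_{\Delta_M}$ between complexes. Decomposing $\cK^{\langle M+m\rangle}$ into $N^m$ $M$-complexes, $L^N_{M+m}$ is bounded by the average over the $N^m$ complexes of i.i.d.\ copies of localized quantities plus boundary errors. The strong law of large numbers over these independent blocks, combined with the diagonal argument of \cite{bib:KaPP2}, gives $\limsup_M L_M^N\le \lim_M\mathbb E L_M^N$ a.s. Independence across blocks is where Poisson independence on disjoint sets and the tail bound (W1.a) with $h$ are used.

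\textbf{Step 2 (Dirichlet side and comparison).} Dirichlet bracketing gives $u^D_{M}\le u^N_M$ pointwise on the diagonal. So $L^D_M\le L^N_M$, and it remains to show $\liminf L^D_M\ge\lim L^N_M$. For the lower bound I will use the superadditivity of Dirichlet traces over subcomplexes, together with the same law of large numbers.

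\textbf{Step 3 (closing the gap).} The main obstacle is proving $L^N_M-L^D_M\to0$, uniformly enough to survive the random averaging. I will split $x\in\cK^{\langle M\rangle}$ into points at $d_{\lfloor M/4\rfloor}$-distance at least $2$ from the boundary vertices, and the rest. The rest have vanishing relative mass, of order $L^{-3Md/4}$ by \eqref{eq:number-points}. Away from the boundary:
\begin{itemize}
\item the killing and wrap-around contributions are controlled by $\mathbf P^x(\tau_{\cK^{\langle M\rangle}}\le t)$ and by $\sum_{x'\neq x}p(t,x,x')$, which are small by \eqref{eq:kum} and subordination;
\item the potential discrepancy between $V^\omega$ and its periodized version is controlled by (W1.b), summable in $M$, via the expectation of $\int W$ over far-away sites. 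Borel--Cantelli then gives a.s.\ negligibility.
\end{itemize}
Kato-class membership (Proposition \ref{pro:katoclass}) guarantees that all Feynman--Kac quantities are finite and continuous, so that the bridge representations above are legitimate.
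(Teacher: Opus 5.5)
Your skeleton for the expectations is the paper's: monotonicity of the periodized Neumann quantity via (W1.c), then boundary-layer estimates and (W1.b) to identify the limits of $\mathbb E^{\mathbb Q}\mathbb L^{D,V^\omega}_M(t)$ and $\mathbb E^{\mathbb Q}\mathbb L^{N,V^\omega}_M(t)$ with $\mathbb L(t)$. The majorant you want is $\mathbb E^{\mathbb Q}\mathbb L^{N,V^\omega_M}_M(t)$. It dominates $\mathbb E^{\mathbb Q}\mathbb L^{N,V^\omega}_M(t)$ by the Poisson Laplace functional and subadditivity of $s\mapsto 1-e^{-s}$.

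Where you differ is the almost sure statement. The paper stays at a single scale. It proves $\sum_M \mathbb E^{\mathbb Q}\bigl(\mathbb L^{D,V^\omega}_M(t)-\mathbb E^{\mathbb Q}\mathbb L^{D,V^\omega}_M(t)\bigr)^2<\infty$ by splitting $V^\omega=V^{\omega,m}+\widetilde V^{\omega,m}$ into local and far parts and showing that local contributions at distant points decorrelate. It transfers this to the Neumann side via $\sum_M\mathbb E^{\mathbb Q}(\mathbb L^{D,V^\omega}_M-\mathbb L^{N,V^\omega}_M)^2<\infty$, and concludes with Chebyshev and Borel--Cantelli.

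You instead use the pointwise bound $u^D_{M,\omega}(t,x,x)\le u^N_{M,\omega}(t,x,x)$ to sandwich; it is correct because $\pi_M$ is the identity on $\cK^{\langle M\rangle}$. You then get the two one-sided a.s.\ bounds from exact independence of block quantities on $M$-complexes plus a law of large numbers. This trades the covariance estimate for independence and concentration, at the cost of a two-scale limit.

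For the Neumann upper bound this works cleanly. For $x$ in a block $\Delta_i$, the part of $u^N_{M+m,\omega}(t,x,x)$ from paths staying in $\Delta_i$ is exactly the Dirichlet kernel of $\Delta_i$. Discarding Poisson points outside $\Delta_i$ only increases it. So you get independent, uniformly bounded block variables plus a deterministic boundary error.

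Several points need repair.

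\emph{Independence is not identical distribution.} The blocks are independent but not i.i.d., and $V^\omega$ is not stationary. $W$ is an arbitrary two-variable profile with no invariance under the maps between complexes, so no ergodic or subadditive theorem applies. Use Kolmogorov's SLLN or Hoeffding's inequality for independent bounded variables. Identify the block means through $\mathbb E^{\mathbb Q}\mathbb L^{D,V^\omega}_{M+m}(t)$ and the monotone limit; this is where (W1.c) genuinely enters.

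\emph{The Dirichlet lower bound is a real hole as written.} It does not follow from ``the same law of large numbers''. Superadditivity yields block traces with the full potential $V^\omega$, and these are dependent. Discarding the far part of $V^\omega$ goes in the wrong direction, so the resulting error terms must be controlled almost surely. One option is a second-moment estimate, which is essentially the paper's variance lemma. Another is to let the block scale grow with the volume, e.g.\ $M=\lfloor M'/2\rfloor$. Then the expected errors are summable in $M'$, by (W1.b), the $L^{-3Md/4}$ boundary-layer mass and the exit probabilities. Hoeffding's bound for the independent part is also summable along this triangular array, and Borel--Cantelli closes the argument.

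\emph{Exit probabilities in Step 3.} These must be estimated under the bridge $\mathbf P^{x,x}_t$, not under $\mathbf P^x$. Split $[0,t]$ into halves and use the disintegration formula, as the paper does.
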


\noindent
The proof of Theorem~\ref{thm:IDS} follows the approach of \cite{bib:KaPP2}, where the existence of the IDS was established for random Schr\"odinger operators with Poisson potentials associated with subordinate Brownian motions on the Sierpi\'nski triangle, the most regular planar simple nested fractal. A closely related scheme was applied more recently to the same class of processes 
and state spaces, but for a different type of random potential, namely the fractal alloy-type potentials induced by the lattice 
$\mathcal V_{0}^{\langle 0\rangle}$~\cite{bib:HB-KK-MO-KPP}. For completeness, a concise account is provided in Appendix~\ref{sec:appA}. Here we only sketch the main idea which will also be essential in the analysis of the Lifshitz singularity forming a central theme of the present work.

In general, the argument relies on approximating the operator $H^{\omega}$, acting in 
$L^2(\mathcal K^{\langle \infty\rangle}, \mathfrak{m})$, by the operators defined on 
$L^2(\mathcal K^{\langle M \rangle}, \mathfrak{m})$ as $M \nearrow \infty$. 
The key ingredient is the sequence of reflected processes on $\mathcal K^{\langle M \rangle}$, $M \in \mathbb Z$, 
together with the folding projections $\pi_M$ constructed in \cite{bib:KOP}. 
Thanks to assumption (\ref{W1}.c), suitable manipulations of the potential 
recover the necessary monotonicity properties, which are essential for the proof.

The convergence of the measures $\Lambda_M^{D, V^\omega}$ and $\Lambda_M^{N, V^\omega}$ is deduced from the convergence of their Laplace transforms $
\mathbb L_M^{D,V^\omega},
\mathbb L_M^{N,V^\omega}.$
Observe that
\begin{align*}
\mathbb L_M^{D,V^\omega} (t) & = \int_0^{\infty} {\rm e}^{-\lambda t} \Lambda_M^{D, V^\omega} ({\rm d}\lambda) \\
& = \frac{1}{\mathfrak{m}\left(\cK^{\langle M\rangle}\right)} \sum_{n=1}^{\infty} {\rm e}^{-\lambda_n^{D,M,V^\omega}t} =  \frac{1}{\mathfrak{m}\left(\cK^{\langle M\rangle}\right)}	\textrm{Tr} T_t^{D,M,V^\omega} \\
& =  \frac{1}{\mathfrak{m}\left(\cK^{\langle M\rangle}\right)} \int_{\cK^{\langle M\rangle}} u_{M,\omega}^D(t,x,x) \, \mathfrak{m}({\rm d}x) \\
& = \frac{1}{\mathfrak{m}\left(\cK^{\langle M\rangle}\right)} \int_{\cK^{\langle M\rangle}} p(t,x,x) \mathbf{E}_{t}^{x,x} \left[{\rm e}^{-\int_0^t V^{\omega}(X_s){\rm d}s}; t<\tau_{\cK^{\langle M\rangle}} \right] \mathfrak{m}({\rm d}x)
\end{align*}
and
\begin{align*}
\mathbb L_M^{N,V^\omega} (t) &  = \int_0^{\infty} {\rm e}^{-\lambda t} \Lambda_M^{N, V^\omega} ({\rm d}\lambda) \\
& = \frac{1}{\mathfrak{m}\left(\cK^{\langle M\rangle}\right)} \sum_{n=1}^{\infty} {\rm e}^{-\lambda_n^{N,M,V^\omega}t} =  \frac{1}{m\left(\cK^{\langle M\rangle}\right)}	\textrm{Tr} T_t^{N,M,V^\omega} \\
& =  \frac{1}{\mathfrak{m}\left(\cK^{\langle M\rangle}\right)} \int_{\cK^{\langle M\rangle}} u_{M,\omega}^N(t,x,x) \, \mathfrak{m}({\rm d}x) \\
 & =  \frac{1}{\mathfrak{m}\left(\cK^{\langle M\rangle}\right)} \int_{\cK^{\langle M\rangle}} p_M(t,x,x) \mathbf{E}_{M,t}^{x,x} \left[{\rm e}^{-\int_0^t V^{\omega}(X^M_s){\rm d}s}\right] \mathfrak{m}({\rm d}x) .
\end{align*}
In fact, we do not approach the convergence of $\mathbb L_M^{D,V^\omega}$ and $\mathbb L_M^{N,V^\omega}$ directly. The line of attack is as follows.
For a given $M$, we first replace the initial Poisson potential $V^{\omega}$ with the potential $V_{M}^{\omega}$, constructed from the periodized configuration of Poisson points via the mapping $\pi_M$, see \eqref{eq:potendef-per}.
Then, we consider the Feynman--Kac semigroup consisting of operators
\begin{equation*}
T_t^{N,M,V^\omega_M} f(x) = \mathbf{E}^x_M \left[{\rm e}^{-\int_0^t V_{M}^{\omega}(X^M_s) {\rm d}s}f(X^M_t) \right], \quad f \in L^2(\cK^{\langle M\rangle}, \mathfrak{m}), \ M \in \mathbb{Z}_+, \ t>0,
\end{equation*}
and the corresponding Neumann Schr\"odinger operator
$$
H_{M}^{N,V^\omega_M} =  \phi(-\cL^M)  + V_{M}^{\omega}.
$$
Finally, we prove the convergence of the Laplace transform $\mathbb L_M^{N,V^\omega_M}(t)$ to a finite limit $\mathbb L(t)$ (identified with the Laplace transform of the IDS). More precisely, in Theorem \ref{thm:convergence_with_star} we show that for every $t>0$,
\begin{align}\label{eq:monot_conv_Laplace}
\mathbb{E}^{\mathbb{Q}}\mathbb L_M^{N,V^\omega_M}(t) \searrow \mathbb L(t) \quad  \text{as} \quad M \to \infty.
\end{align}
Furthermore, since
\begin{align*}
\mathbb L_M^{N,V^\omega_M}(t) = \int_0^{\infty} {\rm e}^{-\lambda t} \Lambda_{M}^{N,V^\omega_M} ({\rm d}\lambda)
& = \frac{1}{\mathfrak{m}(\mathcal K^{\langle M\rangle})} \sum_{n=1}^\infty {\rm e}^{-t \lambda_n^{N,M,V^\omega_M}}
\\
&\leq   \frac{1}{\mathfrak{m}(\mathcal K^{\langle M\rangle})} \left[
{\rm e}^{-(t-1)\lambda_1^{N,M,V^\omega_M}} \mbox{Tr} T_1^{N,M,V^\omega_M}\right] \\
&\leq
{\rm e}^{-(t-1)\lambda_1^{N,M,V^\omega_M}}\frac{1}{\mathfrak{m}(\cK^{\langle M\rangle})} \int_{\mathcal K^{\langle M\rangle}}p_M(1,x,x)\mathfrak{m}({\rm d}x),
\end{align*}
for $t>1$, the monotone convergence in \eqref{eq:monot_conv_Laplace} and \cite[Lemma 2.1(b)]{bib:HB-KK-MO-KPP} give
\begin{equation}\label{eq:trace-estimate}
 \mathbb L(t) \leq C \mathbb{E}^{\mathbb{Q}} {\rm e}^{-(t-1)\lambda_1^{N, M,V^\omega_M}}, \quad t>1, \ M \in \mathbb{Z}_{+},
\end{equation}
with a uniform constant $C>0$, not depending on $M.$ This estimate will be a starting point in the proof of the upper bound for $\mathbb L(t)$. Another important consequence of \eqref{eq:monot_conv_Laplace} is that we also have the convergence (not necessarily monotone)
\begin{align}\label{eq:dirichlet_conv_Laplace}
\mathbb{E}^{\mathbb{Q}}\mathbb L_{M}^{D,V^{\omega}}(t) \rightarrow \mathbb L(t) \quad  \text{as} \quad M \to \infty,
\end{align}
see Corollary \ref{cor:final}. It will be an initial step in the proof of the lower bound for $\mathbb L(t)$.

\section{Upper bound for the Laplace transform of the IDS} \label{sec:LS_general}

Our main Theorem \ref{th:main_IDS}
will be proven under the following assumption \eqref{W2}:

\bigskip

	\begin{itemize}
\item[\bfseries(\namedlabel{W2}{{\bf W2}})] (\textbf{Single-site potential with regular and bounded support})
    \begin{enumerate}
		\item[a)]
		There exist $m_0\in\mathbb Z $ and $A_0>0$ such that
\begin{equation}\label{eq:assump-W}
W(x,y) \geq A_{0} \quad  \text{for all}  \quad  x,y\in \mathcal K^{\langle\infty\rangle} \mbox{ with } d_{m_0}(x,y) \leq 1.
\end{equation}
    \item[b)]
		There exists $M_0 \in \mathbb{Z}_{+}$ such that
		$$W(x,y)=0 \quad \text{for all} \quad x,y \in \mathcal K^{\langle\infty\rangle}   \text{ with }   d_{M_0}(x,y) > 1;$$
    \end{enumerate}
\end{itemize}
\bigskip
\noindent
Condition (\ref{W2}.a) means that the profile $W(x,y)$ is (uniformly) separated from 0 when $x$ and $y$ are close, and condition (\ref{W2}.b) -- that $W(x,y)$ vanishes when $x$ and $y$ are far away.

In this section, we find a general upper estimate for the Laplace transform $\mathbb L(t)$ of the IDS for single-site potentials $W$ with regular support, i.e.\ we assume that (\ref{W2}.a) holds. It does not require any assumption on the boundedness of the support of $W$ (in particular, (\ref{W2}.b) is not needed here). However, in Section \ref{sec:LS_bounded} this result will be complemented with a matching lower bound  on  $\mathbb L(t)$ for single-site potentials $W$ with bounded  support. 

The following theorem is our main result in this section.

\begin{theorem}\label{th:main} Assume that the characteristic exponent $\phi$ satisfies \textup{(\ref{B})}, and that the single-site potential $W$ satisfies \textup{(\ref{W2}.a)} and \textup{(\ref{W1})}. Then for every $\nu_0>0$ there exists a constant $C = C(\nu_0)>0$ such that for every $\nu \geq \nu_0$
\begin{equation}\label{eq:upper-poiss-1}
\limsup_{t\to\infty} \frac{\log \mathbb L(t)}{t^{\frac{d}{d+\alpha}}}\leq -C\nu^{\frac{\alpha}{d+\alpha}}.
\end{equation}
\end{theorem}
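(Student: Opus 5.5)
We start from the trace estimate \eqref{eq:trace-estimate}, $\mathbb L(t)\le C\,\mathbb E^{\mathbb Q}{\rm e}^{-(t-1)\lambda_1^{N,M,V^\omega_M}}$, valid for every $M\in\mathbb Z_+$. We choose $M=M(t)$ growing with $t$ and bound the bottom eigenvalue of $H_M^{N,V^\omega_M}=\phi(-\cL^M)+V^\omega_M$ from below, outside an event of small $\mathbb Q$-probability.

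\textbf{Reduction to a complex alloy-type potential.} Fix a scale $m\ge m_0$, to be chosen later as a function of $t$. Using (\ref{W2}.a), every Poisson point $y$ in an $m_0$-complex $\Delta\subset\cK^{\langle M\rangle}$ gives $W(x,y)\ge A_0$ for all $x\in\Delta$. Summing over the periodized cloud and discarding all other contributions, we get
\[
V^\omega_M(x)\ \ge\ \overline V^{\,\omega}_M(x):=A_0\sum_{\Delta\in\cT_{m_0},\,\Delta\subset\cK^{\langle M\rangle}}\xi_\Delta(\omega)\,\mathbf 1_\Delta(x),\qquad \xi_\Delta:=\mathbf 1_{\{\mu^\omega(\Delta)\ge1\}}.
\]
The $\xi_\Delta$ are independent Bernoulli variables with parameter $q_\nu=1-{\rm e}^{-\nu \mathfrak m(\Delta)}$, since distinct $m_0$-complexes meet only at vertices, which are $\mathfrak m$-null. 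This is exactly a potential of type \eqref{eq:alloy-complex}, with sites given by $m_0$-complexes. By monotonicity of eigenvalues, $\lambda_1^{N,M,V^\omega_M}\ge\lambda_1(\phi(-\cL^M)+\overline V^{\,\omega}_M)$.

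\textbf{Temple's inequality at scale $M$.} Following \cite{bib:KaPP3,bib:HB-KK-MO-KPP}, we apply Temple's inequality with the test function $\psi_1^M\equiv L^{-Md/2}$. We truncate the potential by $\min(\overline V^{\,\omega}_M,c\lambda_2^M)$, which only lowers the eigenvalue; by \eqref{eq:2.8} and (\ref{B}.b), $\lambda_2^M\asymp L^{-M\alpha}$. Temple's inequality then gives
\[
\lambda_1\ \ge\ \langle \overline V\rangle-\frac{\langle \overline V^2\rangle}{\lambda_2^M-\langle\overline V\rangle}\ \ge\ \tfrac12\langle\overline V\rangle,
\]
where $\langle\cdot\rangle$ denotes the average over $\cK^{\langle M\rangle}$. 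The last inequality holds whenever $\langle\overline V\rangle\le \lambda_2^M/4$, which we arrange using the truncation constant $c$. Since $\min(A_0\xi_\Delta,c\lambda_2^M)=c\lambda_2^M\xi_\Delta$ for large $M$, we obtain
\[
\lambda_1\ \ge\ c'L^{-M\alpha}\cdot\frac{\#\{\Delta:\xi_\Delta=1\}}{\#\{\Delta\}}.
\]

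\textbf{Large deviations and choice of scales.} The number of $m_0$-complexes in $\cK^{\langle M\rangle}$ is $n\asymp L^{(M-m_0)d}$. A Chernoff bound gives
\[
\mathbb Q\big(\#\{\xi_\Delta=1\}\le nq_\nu/2\big)\le{\rm e}^{-cnq_\nu}.
\]
Therefore
\[
\mathbb L(t)\le C\big({\rm e}^{-ct L^{-M\alpha}q_\nu}+{\rm e}^{-c L^{Md}q_\nu}\big).
\]
Balancing the two exponents suggests $L^{M(d+\alpha)}\asymp t$, which would give exponent $t^{d/(d+\alpha)}q_\nu$. However, $q_\nu\le1$ saturates as $\nu$ grows, so this choice does not produce the required $\nu^{\alpha/(d+\alpha)}$ dependence.

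\textbf{Main obstacle: the dependence on $\nu$.} To recover the correct power of $\nu$, we let the site scale depend on $\nu$. We pick complexes of level $k$ with $\nu L^{kd}\asymp1$, which is possible for $\nu\ge\nu_0$ after fixing $k$ relative to $m_0$. For $\nu$ large, $L^{k}<L^{m_0}$, and we instead use the lower bound $W\ge A_0$ on $m_0$-complexes, together with the fact that many Poisson points give a potential proportional to their number. Concretely, set $\xi_\Delta:=\mu^\omega(\Delta)$, a Poisson variable with mean $\nu\mathfrak m(\Delta)$. Then $\langle\overline V\rangle\approx A_0\nu$ with large-deviation rate $\asymp n\nu$. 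Truncation caps the potential at $\lambda_2^M$, and the resulting bounds are
\[
\lambda_1\gtrsim \min(\nu,L^{-M\alpha}),\qquad \text{bad-event probability }\le{\rm e}^{-cL^{Md}\nu}.
\]
Optimizing $L^{M}\asymp(t/\nu)^{1/(d+\alpha)}$ balances $tL^{-M\alpha}$ against $L^{Md}\nu$. Both then equal $t^{d/(d+\alpha)}\nu^{\alpha/(d+\alpha)}$, which gives \eqref{eq:upper-poiss-1}.

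The delicate point is to make the Temple step uniform in $\nu\ge\nu_0$ with truncation at $\lambda_2^M$. One has to check that $\langle\overline V^2\rangle\le c\lambda_2^M\langle\overline V\rangle$ and that $M$ is large enough for (\ref{B}.b) to apply, that is $L^{-Md_w}\mu^1_2\le\lambda_0$. The latter holds for $t$ large with $\nu$ fixed, and the constants can be taken independent of $\nu\ge\nu_0$.
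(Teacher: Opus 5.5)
\textbf{Verdict: genuine gap, in the large-deviation step.} Your first steps match the paper's Steps 1--2: the trace estimate, the domination $V^\omega_M\ge \overline V^{\,\omega}_M$ by the occupation potential on $m_0$-complexes, and Temple's inequality with $\psi_1^M$ after truncating at a multiple of $\lambda_2^M$. The gap comes after that. You misdiagnose the obstacle, drop an argument that works, and replace it with one that does not.

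\textbf{The saturation in $\nu$ is not real.} It comes from your lossy multiplicative Chernoff bound ${\rm e}^{-cnq_\nu}$, not from the Bernoulli reduction. The relevant bad event is that at least a fraction $1-\delta$ of the $n=L^{(M-m_0)d}$ complexes is empty. Apply Proposition~\ref{prop:lemma-35} to the number of empty complexes, with $p=p_\nu={\rm e}^{-\nu L^{m_0d}}$ and $\gamma=1-\delta$, where $\delta=\delta(\nu_0)$ is so small that $\frac{1}{1-\delta}\big(\frac1\delta\big)^{\frac{\delta}{1-\delta}}\sqrt{p_{\nu_0}}<1$. This gives
\[
\mathbb Q\big[\mathcal A^c_{M,\delta}\big]\le\Big[\tfrac{1}{1-\delta}\big(\tfrac1\delta\big)^{\frac{\delta}{1-\delta}}\,p_\nu\Big]^{(1-\delta)n}\le {\rm e}^{-\frac{1-\delta}{2}\nu L^{Md}} .
\]
The exponent is linear in $\nu$ because each empty complex costs $\log(1/p_\nu)=\nu L^{m_0d}$. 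The exact Chernoff rate (the Kullback--Leibler divergence) keeps this term; ${\rm e}^{-cnq_\nu}$ throws it away. On $\mathcal A_{M,\delta}$, Temple gives $\lambda_1\ge \widetilde C_1\delta/(8L^{M\alpha})$, uniformly in $\nu$. Balancing with $L^{M(d+\alpha)}\asymp t/\nu$ then finishes the proof; this is exactly the paper's Step 3.

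\textbf{Your Poisson-count fix does not close the gap.} The truncation needed for Temple's inequality is pointwise. For large $M$ it gives $\min\big(A_0\mu^\omega(\Delta),c\lambda_2^M\big)=c\lambda_2^M\mathbf 1_{\{\mu^\omega(\Delta)\ge1\}}$, so the truncated potential again sees only occupation indicators. The rate $\asymp n\nu$ you quote is for the total count $\sum_\Delta\mu^\omega(\Delta)$, i.e.\ the untruncated average. That event does not control the truncated average: a configuration with a typical total count packed into a few complexes has almost no occupied complexes. So $\lambda_1\gtrsim\min(\nu,L^{-M\alpha})$ does not follow on your good event, and you still need the occupation-fraction estimate above.

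\textbf{Your abandoned $\nu$-dependent scale would also have worked.} For $k\le m_0$, every $k$-complex lies inside an $m_0$-complex, so \textup{(\ref{W2}.a)} still gives $W\ge A_0$ on it. Take $k=m_0$ if $\nu L^{m_0d}<1$, and otherwise the smallest $k$ with $\nu L^{kd}\ge1$, so that $\nu L^{kd}\in[1,L^d)$. In both cases $nq\gtrsim \nu L^{Md}$, and $q$ is bounded below uniformly in $\nu\ge\nu_0$. Your crude Chernoff bound then already yields ${\rm e}^{-c\nu L^{Md}}$.
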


\noindent
In light of \eqref{eq:trace-estimate}, the proof of the above theorem can be reduced to finding a satisfactory upper estimate for the expectation $$\mathbb{E}^{\mathbb{Q}} {\rm e}^{-(t-1)\lambda_1^{N, M,V^{\omega}_M}}, $$ where $\lambda_1^{N, M,V^{\omega}_M}$ is the the ground state eigenvalue of the random Schr\"odinger operator $H_{M}^{N,V^{\omega}_M}$. Our reasoning consists of three main steps.

\subsection*{STEP 1. Reduction to an alloy-type potential indexed by complexes of the fractal}
 This step is critical for the entire proof. The key observation is that we can replace the potential
$V_{M}^{\omega}$ by an alloy-type random potential in which the sites are modeled by complexes of the fractal of order $m_0$.
Formally, we consider $m_0 \in \mathbb Z$ from assumption (\ref{W2}.a) and introduce the collection of random variables $\big\{q_{\Delta}: \Delta \in \cT_{m_0}\big\}$, which indicate whether the set $\Delta \setminus \mathcal V^{\langle \infty\rangle}_{m_0}$ received any of Poisson points or not, i.e.
$$
q_{\Delta}(\omega):= \mathbf{1}_{\big\{\Delta \setminus \mathcal V^{\langle \infty\rangle}_{m_0} \ \text{received any of Poisson points} \big\}}(\omega), \quad \Delta \in \cT_{m_0}
$$
(recall that $\cT_{m_0}$ denotes the family of all $m_0$-complexes in $\cK^{\langle \infty \rangle}$).
It then follows directly from the definition that $q_{\Delta}$ are i.i.d.\ random variables on $\left(\Omega, \cM, \mathbb{Q}\right)$ with Bernoulli distribution
$$
\mathbb Q[q_{\Delta}=0]={\rm e}^{-\nu L^{dm_0}} \quad \text{and} \quad \mathbb Q[q_{\Delta}=1]= 1-{\rm e}^{-\nu L^{dm_0}}.
$$
With this in mind, we set
\begin{align}
{\overline{V}}_{M}^{\, \omega}(x) & = A_0 \cdot \sum_{ \Delta \in \cT_{m_0} \atop \Delta \subset \cK^{\langle M \rangle}} q_{\Delta}(\omega) \cdot \mathbf 1_{\Delta \setminus \mathcal V^{\langle \infty\rangle}_{m_0}}(\pi_M(x)) \label{eq:period_alloy}\\
& = A_0 \cdot q_{\Delta(\pi_M(x))}(\omega) \cdot \mathbf 1_{\cK^{\langle \infty \rangle} \setminus \mathcal V^{\langle \infty\rangle}_{m_0}}(x) ,\qquad x\in \cK^{\langle \infty \rangle}, \ M\in \mathbb Z_+, \ M\geq m_0, \nonumber
\end{align}
where the constant $A_0>0$ comes from the assumption (\ref{W2}.a) (if $x \in \cK^{\langle \infty \rangle} \setminus \mathcal V^{\langle \infty\rangle}_{m_0}$, then $\Delta_{m_0}(x)$ denotes the unique $m_0$-complex containing $x$). By (\ref{W2}.a),
\begin{align}\label{eq:poten_monot}
V_{M}^{\omega}(x) \geq {\overline{V}}_{M}^{\, \omega}(x), \quad x \in \cK^{\langle \infty \rangle}.
\end{align}
Indeed, since $\mu^{\omega}(B)$ counts the Poisson points that fell onto $B$, we get
\begin{align*}
V_{M}^{\omega}(x) & = \int_{\mathcal K^{\langle M\rangle}} \sum_{y{'} \in \pi_M^{-1}(y)} W(x,y{'}) \mu^{\omega}({\rm d}y) \\
                  & \geq \mathbf 1_{\cK^{\langle \infty \rangle} \setminus \mathcal V^{\langle \infty\rangle}_{m_0}}(x)
									   \cdot A_0 \cdot \mu^{\omega}\big(\Delta_{m_0}(\pi_M(x))\big) \\
                  & \geq \mathbf 1_{\cK^{\langle \infty \rangle} \setminus \mathcal V^{\langle \infty\rangle}_{m_0}}(x) \cdot A_0 \cdot q_{\Delta_{m_0}(\pi_M(x))}(\omega)
										= {\overline{V}}_{M}^{\, \omega}(x).
\end{align*}
\begin{remark}[\textbf{General alloy-type potentials indexed by fractal complexes}]\label{rem:complex_alloy}{\rm
It is instructive to notice that the potential ${\overline{V}}_{M}^{\, \omega}(x)$ introduced above is a special case of a general alloy-type potential in which the sites are no longer given by the lattice vertices, but they can be identified with the complexes of the fractal. More precisely, for a given control level $m_0 \in \mathbb Z,$ one can consider a family of i.i.d.\ nonnegative random variables $\big\{\xi_{\Delta}: \Delta \in \cT_{m_0}\big\}$ and the single-site potential $\overline{W} : \cK^{\langle \infty \rangle} \times \cT_{m_0} \to [0,\infty)$, then define
$$
{\overline{V}}^{\, \omega}(x) := \sum_{\Delta \in \cT_{m_0}} \xi_{\Delta}(\omega) \overline{W}(x,\Delta), \quad x \in \cK^{\langle \infty \rangle}.
$$
Such alloy-type potentials seem better suited to a `birds-eye' view of the random potential configuration: natural structure on
fractals is labeled with complexes, not their vertices.
One can also consider a version of this potential which is constructed for the $\pi_M$-periodized configuration of the sites, where $M \geq m_0$:
$$
{\overline{V}}_{M}^{\, \omega}(x) := \sum_{\Delta_{m_0} \in \cT_{m_0} \atop \Delta_{m_0} \subset \cK^{\langle M \rangle}}
                                       \xi_{\Delta_{m_0}}(\omega) \cdot \sum_{\Delta_{m_0}^{'} \in \pi_M^{-1}(\Delta_{m_0})}\overline{W}(x,\Delta_{m_0}^{'}), \qquad x \in \cK^{\langle \infty \rangle}.
$$
Observe that the potential in \eqref{eq:period_alloy} is exactly the one with
$$
\overline{W}(x,\Delta_{m_0}):=  A_0 \cdot \mathbf 1_{\Delta_{m_0} \setminus \mathcal V^{\langle \infty\rangle}_{m_0}}(x)
$$
and the random variables $\xi_{\Delta_{m_0}}=q_{\Delta_{m_0}}$ induced by the Poisson point process on $\cK^{\langle \infty \rangle}$.
To the best of our knowledge such random fields have not yet been studied on fractals.
}
\end{remark}

\bigskip

We can now conclude the first step of our proof:\
a combination of \eqref{eq:trace-estimate} and \eqref{eq:poten_monot} yields
\begin{equation}\label{eq:trace-estimate-alloy}
 \mathbb L(t) \leq C \mathbb{E}^{\mathbb{Q}} {\rm e}^{-(t-1)\lambda_1^{N,M,{\overline{V}}_{M}^{\, \omega}}}, \quad t>1, \ M \in \mathbb{Z}_+, \ M \geq m_0,
\end{equation}
where $\lambda_1^{N,M,{\overline{V}}_{M}^{\, \omega}}$ is the ground state eigenvalue of the operator
$H_{M}^{N,{\overline{V}}_{M}^{\, \omega}} = \phi(-\cL^M)  + {\overline{V}}_{M}^{\, \omega}$.

 The rest of the proof follows the scheme from \cite{bib:KaPP3} where the integer-lattice alloy-type potentials in Euclidean spaces were considered. These ideas can be adapted to the present setting of alloy-type potentials in which the sites are modeled by fractal complexes. This will be done in the next two steps.

\subsection*{STEP 2. Lower estimate for the ground state eigenvalue $\lambda_1^{N,M,{\overline{V}}_{M}^{\, \omega}}$}

We will need the following classical result.

 \begin{proposition}[{Temple's inequality, \cite[Theorem XIII.5]{bib:RS}}] \label{prop:temple}
 Suppose $H$ is a self-adjoint operator  on a Hilbert space with inner product $\langle \cdot,\cdot \rangle$ such that $\lambda_1:=\inf\sigma(H)$ is an isolated eigenvalue and let $\eta\leq \inf(\sigma(H)\setminus \{\lambda_1\}).$  Then for any $\psi\in\mathcal D(H)$ which satisfies
	\begin{equation}\label{eq:temple-condition}
	\langle \psi, H\psi\rangle <\eta\quad\mbox{ and } \quad  \|\psi\|=1
	\end{equation}
	the following bound holds:
	\begin{equation}\label{eq:temple-ineq}
	\lambda_1\geq \langle \psi, H\psi\rangle - \frac{\langle H\psi, H\psi\rangle-\langle \psi,H\psi\rangle^2}{\eta-\langle \psi, H\psi\rangle}.
	\end{equation}
\end{proposition}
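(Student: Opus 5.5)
The plan is to use the spectral theorem for $H$ and test a single nonnegative polynomial in $H$ against $\psi$; rearranging the resulting inequality will give \eqref{eq:temple-ineq}.

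\emph{Setup.} Let $\mu_\psi$ be the spectral measure of $H$ associated with $\psi$. Since $\|\psi\|=1$, it is a probability measure on $\sigma(H)$. Since $\psi\in\mathcal D(H)$, we have $\int t^2\,\mu_\psi({\rm d}t)=\langle H\psi,H\psi\rangle<\infty$. Set $E:=\langle\psi,H\psi\rangle=\int t\,\mu_\psi({\rm d}t)$. Because $\lambda_1=\inf\sigma(H)$, we get $E\ge\lambda_1$. Combined with \eqref{eq:temple-condition}, this gives $\lambda_1\le E<\eta$, so in particular $\eta-E>0$.

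\emph{Key pointwise inequality.} By the choice of $\eta$,
\[
\sigma(H)\subset\{\lambda_1\}\cup[\eta,\infty),
\]
because $\eta\le\inf(\sigma(H)\setminus\{\lambda_1\})$. Hence the polynomial $f(t)=(t-\lambda_1)(t-\eta)$ is nonnegative on $\sigma(H)$. It vanishes at $t=\lambda_1$, and for $t\ge\eta$ both factors are nonnegative, using $\eta>\lambda_1$. Integrating $f$ against $\mu_\psi$, which is legitimate since $\mu_\psi$ has a finite second moment, gives
\[
0\le\int(t-\lambda_1)(t-\eta)\,\mu_\psi({\rm d}t)=\langle H\psi,H\psi\rangle-(\lambda_1+\eta)E+\lambda_1\eta .
\]

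\emph{Rearrangement.} The inequality above is equivalent to $\lambda_1(\eta-E)\ge\eta E-\langle H\psi,H\psi\rangle$. Dividing by $\eta-E>0$ gives
\[
\lambda_1\ \ge\ \frac{\eta E-\langle H\psi,H\psi\rangle}{\eta-E}\ =\ E-\frac{\langle H\psi,H\psi\rangle-E^2}{\eta-E},
\]
where the last equality is an elementary algebraic identity. This is exactly \eqref{eq:temple-ineq}.

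No step is a genuine obstacle. The only points needing care are the strict positivity of $\eta-E$, which comes from \eqref{eq:temple-condition} together with $E\ge\lambda_1$, and the integrability of $t^2$ under $\mu_\psi$, which comes from $\psi\in\mathcal D(H)$.
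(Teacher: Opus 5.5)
Your proof is correct. It is the standard argument behind Reed--Simon's Theorem XIII.5: nonnegativity of $(t-\lambda_1)(t-\eta)$ on $\sigma(H)$, integrated against the spectral measure of $\psi$ and then rearranged. The paper only cites that theorem and gives no proof of its own, so there is nothing further to compare.
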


Let
 $\eta=  \lambda_2^M$ be the second eigenvalue of the  operator $\phi(-\cL^M)$).
From \eqref{eq:2.8} we have
$$
\eta = \phi(\mu_2^M)=\phi(L^{-Md_w}\mu_2^1)
$$
and using  our assumption (\ref{B}.b)  we get that there exists $M_1\geq m_0$ such that for $M \in \mathbb Z_+$, $M \geq M_1,$
\begin{align}\label{eq:eigen_scaling}
\widetilde{C}_1\frac{1}{L^{M\alpha}} \leq \eta \leq \widetilde{C}_2\frac{1}{L^{M\alpha}},
\end{align}
with $\widetilde{C}_1 = C_1(\mu_2^1)^{\alpha/d_w}$ and $\widetilde{C}_2 = C_2(\mu_2^1)^{\alpha/d_w}.$ Define:
\begin{equation}\label{per-d0}
D_0 = \frac{\widetilde{C}_1}{4A_0}.
 \end{equation}
 We may and do assume that $M_1$ is large enough so that for $M \geq M_1$ we have
$$
D_0 \leq L^{M \alpha}.
$$
For every $\Delta \in \cT_{m_0}$ and $M \in \mathbb Z_+$, $M \geq M_1$ we set
\begin{equation}\label{eq:q-diminished}
\widetilde{q}_\Delta(\omega):= q_\Delta(\omega)\wedge\frac{D_0}{L^{M\alpha}} = \frac{D_0}{L^{M\alpha}} \cdot q_\Delta(\omega)
\end{equation}
and consider the  potential
\begin{align}
\widetilde {V}_{M}^\omega(x) & = A_0 \cdot \sum_{\Delta \in \cT_{m_0} \atop \Delta \subset \cK^{\langle M \rangle}} \widetilde q_{\Delta}(\omega) \cdot \mathbf 1_{\Delta \setminus \mathcal  V^{\langle \infty\rangle}_{m_0}}(\pi_M(x)) \nonumber \\
& = \frac{A_0 D_0}{L^{M\alpha}} \cdot \sum_{\Delta \in \cT_{m_0} \atop \Delta \subset \cK^{\langle M \rangle}} q_{\Delta}(\omega) \cdot \mathbf 1_{\Delta \setminus \cV^{\langle \infty\rangle}_{m_0}}(\pi_M(x)) \nonumber\\
& = \frac{A_0 D_0}{L^{M\alpha}} \cdot q_{\Delta(\pi_M(x))}(\omega) \cdot \mathbf 1_{\cK^{\langle \infty \rangle} \setminus \mathcal V^{\langle \infty\rangle}_{m_0}}(x) ,\qquad x\in \cK^{\langle \infty \rangle}. \nonumber
\end{align}  Clearly, $\widetilde {V}_{M}^\omega(x) \leq \overline {V}_{M}^\omega(x)$ and, in consequence,
\begin{align} \label{eq:ineq_eigen}
\lambda_1^{N,M,\widetilde {V}_{M}^\omega} \leq \lambda_1^{N,M,\overline {V}_{M}^\omega}.
\end{align}
Therefore it is enough to estimate from below the eigenvalue $\lambda_1^{N,M,\widetilde {V}_{M}^\omega}$ instead of $\lambda_1^{N,M,\overline {V}_{M}^\omega}.$

\smallskip

We have a lemma.
\begin{lemma}\label{lem:l-33} Let the assumptions $\mathbf{(B)}$ and $\mathbf{(W1.a)}$  hold.  Then for any $M \in \mathbb Z_+$, $M \geq M_1$ we have
\begin{equation}\label{eq:l-33-teza}
 \lambda_1^{N,M,\widetilde {V}_{M}^\omega}  \geq \frac{1}{L^{Md}}\left[ \int_{\mathcal{K}^{\left\langle M\right\rangle}} \widetilde{V}^\omega_{M}(x){ \rm d}\mathfrak{m}(x) - \frac{2\int_{\mathcal{K}^{\left\langle M\right\rangle}} (\widetilde{V}^{\omega}_{M}(x))^2{\rm d}\mathfrak{m}(x)}{\widetilde{C}_1L^{-M\alpha}} \right].
\end{equation}
\end{lemma}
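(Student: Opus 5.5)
We apply Temple's inequality (Proposition \ref{prop:temple}) to $H = H_M^{N,\widetilde V_M^\omega} = \phi(-\cL^M) + \widetilde V_M^\omega$ on $L^2(\cK^{\langle M\rangle},\mathfrak m)$, with the constant test function $\psi = \psi_1^M \equiv L^{-Md/2}$. This is the ground state of the free operator $\phi(-\cL^M)$, and $\|\psi\|=1$ because $\mathfrak m(\cK^{\langle M\rangle}) = L^{Md}$. Since $\phi(-\cL^M)\psi = 0$, we have $H\psi = \widetilde V_M^\omega \psi$. Hence
\[
\langle \psi, H\psi\rangle = L^{-Md}\int_{\cK^{\langle M\rangle}} \widetilde V^\omega_M \,{\rm d}\mathfrak m,
\qquad
\langle H\psi, H\psi\rangle = L^{-Md}\int_{\cK^{\langle M\rangle}} (\widetilde V^\omega_M)^2 \,{\rm d}\mathfrak m .
\]
Note that $\psi\in\mathcal D(H)$, since $\widetilde V_M^\omega$ is bounded by $A_0D_0L^{-M\alpha}$.

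For the spectral parameter we take $\eta = \widetilde C_1 L^{-M\alpha}$. We must check that $\eta \le \inf(\sigma(H)\setminus\{\lambda_1\})$. Since $\widetilde V_M^\omega\ge 0$, the min--max principle gives $\lambda_2(H) \ge \lambda_2^M$, and \eqref{eq:eigen_scaling} gives $\lambda_2^M \ge \widetilde C_1 L^{-M\alpha}$ for $M\ge M_1$. So $\eta$ is admissible. The bottom of the spectrum is an isolated eigenvalue because the spectrum is discrete.

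Next we check the Temple condition \eqref{eq:temple-condition}. We have $\widetilde V_M^\omega \le A_0 D_0 L^{-M\alpha} = \tfrac14 \widetilde C_1 L^{-M\alpha}$ by \eqref{per-d0}. Therefore
\[
\langle\psi,H\psi\rangle \le \tfrac14\eta < \eta ,
\qquad
\eta - \langle\psi,H\psi\rangle \ge \tfrac34\eta \ge \tfrac12 \eta = \tfrac12\widetilde C_1 L^{-M\alpha}.
\]

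Temple's inequality \eqref{eq:temple-ineq} now gives
\[
\lambda_1 \ge \langle\psi,H\psi\rangle - \frac{\langle H\psi,H\psi\rangle}{\eta-\langle\psi,H\psi\rangle},
\]
where we simply drop the nonnegative term $\langle\psi,H\psi\rangle^2$ from the numerator. Inserting the lower bound $\eta-\langle\psi,H\psi\rangle\ge \tfrac12\widetilde C_1L^{-M\alpha}$ in the denominator and the two integral formulas above yields exactly \eqref{eq:l-33-teza}.

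The only step needing care is the identification of $\eta$ with a lower bound for the second eigenvalue of the perturbed operator. This follows from monotonicity of eigenvalues under nonnegative perturbations via min--max, together with the choice $M\ge M_1$. The whole construction of $D_0$ was designed precisely so that the Temple condition holds with room to spare.
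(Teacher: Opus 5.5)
Your proposal is correct and follows the paper's proof essentially step by step. You apply Temple's inequality to $H_M^{N,\widetilde V_M^\omega}$ with the constant test function $\psi_1^M$, use $\widetilde V_M^\omega \le A_0D_0L^{-M\alpha}=\tfrac14\widetilde C_1L^{-M\alpha}$ both to verify the Temple condition and to bound the denominator below by $\tfrac12\widetilde C_1L^{-M\alpha}$, and drop $\langle\psi,H\psi\rangle^2$. The only cosmetic difference is that you take $\eta=\widetilde C_1L^{-M\alpha}$ instead of the paper's $\eta=\lambda_2^M$; this is admissible by the min--max monotonicity under the nonnegative perturbation $\widetilde V_M^\omega$, which you spell out and the paper leaves implicit.
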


\begin{proof} Let us fix  $M \geq M_1.$
We apply Temple's inequality to  $H=H^{N,\widetilde {V}_{M}^\omega}_{M} = \phi(-\cL^M) + \widetilde {V}_{M}^\omega$  and $\eta = \lambda_2^{M}$  (nonrandom).
The spectrum of this operator is discrete and we have that
$$
\eta\leq \lambda_2^{N,M,\widetilde {V}_{M}^\omega}=\inf \left( \sigma(H^{N,\widetilde {V}_{M}^\omega}_{M}) \backslash \left\{\lambda_1^{N,M,\widetilde {V}_{M}^\omega}\right\}\right).
$$
Choose
 $\psi = \psi_1^M \equiv \frac{1}{L^{Md/2}}$ to be the principal normalized eigenfunction  of the operator $\phi(-\cL^M)$.  Thanks to (\ref{eq:2.8}) and (\ref{eq:2.9}), $\phi(-\cL_M)\psi = 0$, and further
$$
\left\langle\psi,H^{N,\widetilde {V}_{M}^\omega}_{M}\psi\right\rangle = \left\langle\psi,\phi(-\cL_M)\psi\right\rangle+ \left\langle\psi,\widetilde{V}^\omega_{M}\psi\right\rangle=
\left\langle\psi,\widetilde{V}^\omega_{M}\psi\right\rangle=
\frac{1}{L^{Md}}\int_{\mathcal{K}^{\left\langle M\right\rangle}} \widetilde{V}^\omega_{M}(x){\rm d}\mathfrak{m}(x).
$$
 Similarly,
$$
\left\langle H^{N,\widetilde {V}_{M}^\omega}_{M}\psi,H^{N,\widetilde {V}_{M}^\omega}_{M}\psi\right\rangle = \frac{1}{L^{Md}}\int_{\mathcal{K}^{\left\langle M\right\rangle}} \big(\widetilde{V}^\omega_{M}(x)\big)^2{\rm d}\mathfrak{m}(x).
$$
From the definition of $\widetilde V_{M}^\omega$ we get
\begin{eqnarray*}
\int_{\mathcal{K}^{\left\langle M\right\rangle}} \widetilde{V}_{M}^\omega(x){\rm d}\mathfrak{m}(x) &=& \frac{A_0D_0}{L^{M\alpha }} \int_{\mathcal{K}^{\left\langle M\right\rangle}} \sum_{\mathcal T_{m_0}\ni\Delta\subset \mathcal{K}^{\langle M\rangle} }{q_\Delta}(\omega)\mathbf 1_{\Delta \setminus \mathcal V^{\langle \infty\rangle}_{m_0}}(x) { \rm d}\mathfrak{m}(x)\\
&\leq & \frac{A_0D_0}{L^{M\alpha }}  L^{Md},
\end{eqnarray*}
and consequently,  by \eqref{per-d0} and \eqref{eq:eigen_scaling},
$$
\left\langle\psi,H^{N,\widetilde {V}_{M}^\omega}_{M}\psi\right\rangle \leq  \frac{A_0D_0}{L^{M\alpha}}  < \frac{\widetilde{C}_1 }{L^{M\alpha}}\leq \eta,
$$
which means that the condition (\ref{eq:temple-condition}) is satisfied.
Moreover, this yields
\begin{equation}\label{eq:temple_den}
\eta - \left\langle\psi,H^{N,\widetilde {V}_{M}^\omega}_{M}\psi\right\rangle \geq
\frac{\widetilde{C}_1}{L^{M\alpha }}-\frac{ A_0D_0}{L^{M\alpha }}  > \frac{\widetilde{C}_1}{2 L^{M\alpha }}.
\end{equation}
Inserting (\ref{eq:temple_den}) into the formula in Temple's inequality (\ref{eq:temple-ineq})  (with $H = H^{N,\widetilde {V}_{M}^\omega}_{M}$),  we obtain
\begin{eqnarray*}
\lambda_1^{N,M,\widetilde {V}_{M}^\omega}  & \geq & \langle \psi, H\psi\rangle -  \frac{\langle H\psi, H\psi\rangle}{\eta - \left\langle\psi,H\psi\right\rangle}\\
& \geq & \frac{1}{L^{Md}}\left[ \int_{\mathcal{K}^{\left\langle M\right\rangle}} \widetilde{V}^{\omega}_{M}(x){\rm d}\mathfrak{m}(x) - \frac{ 2 \int_{\mathcal{K}^{\left\langle M\right\rangle}} (\widetilde{V}^{\omega}_{M}(x))^2{\rm d}\mathfrak{m}(x)}  {\widetilde{C}_1 L^{-M\alpha }} \right].
\end{eqnarray*}
\end{proof}

\medskip

The estimate in Lemma \ref{lem:l-33} will be of use for those realizations of Poisson cloud for which the number of complexes from $\mathcal T_{m_0}$ with at least one Poisson point is sufficiently large. More precisely,
let us fix $\delta \in (0,1)$ (later we show how to choose this value) and define:
\begin{eqnarray*}
\mathcal{A}_{M,\delta} & = & \left\{ \omega: \#\left\{\mathcal T_{m_0}\ni\Delta\subset \mathcal{K}^{\langle M\rangle} : q_\Delta(\omega) = 1\right\} \geq \delta \cdot L^{(M-m_0)d}\right\}.
\end{eqnarray*}
Here $L^{(M-m_0)d}$ is the number of all $m_0$-complexes included in $\mathcal{K}^{\langle M\rangle}$.

We are now ready to prove another auxiliary lemma.

\begin{lemma}\label{lem:34}
Let the assumptions \eqref{B} and \textup{(\ref{W2}.a)}  hold and let $\delta > 0$ be fixed.  Then
for any $M \in \mathbb Z_+$, $M \geq M_1$
and $\omega \in \mathcal{A}_{M,\delta}$ it holds that
$$
\lambda_1^{N,M,{\overline{V}}_{M}^{\, \omega}} \geq \frac{\widetilde{C}_1\delta }{8L^{M\alpha}}.
$$
\end{lemma}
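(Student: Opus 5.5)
By \eqref{eq:ineq_eigen} it suffices to bound $\lambda_1^{N,M,\widetilde V_M^\omega}$ from below. For this we use Lemma \ref{lem:l-33}, and the only task is to evaluate the two integrals on its right-hand side explicitly on the event $\mathcal A_{M,\delta}$.

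\textbf{Step 1: the two integrals.} The set $\mathcal V^{\langle\infty\rangle}_{m_0}$ is countable, hence $\mathfrak m$-null. Each $m_0$-complex has measure $L^{m_0 d}$. Since $q_\Delta\in\{0,1\}$, we have $q_\Delta^2=q_\Delta$. Write $n(\omega):=\#\{\mathcal T_{m_0}\ni\Delta\subset\mathcal K^{\langle M\rangle}: q_\Delta(\omega)=1\}$. From the formula for $\widetilde V_M^\omega$ we then get
\[
\int_{\mathcal K^{\langle M\rangle}}\widetilde V_M^\omega\,{\rm d}\mathfrak m=\frac{A_0D_0}{L^{M\alpha}}\,n(\omega)L^{m_0d},\qquad
\int_{\mathcal K^{\langle M\rangle}}(\widetilde V_M^\omega)^2\,{\rm d}\mathfrak m=\Big(\frac{A_0D_0}{L^{M\alpha}}\Big)^2 n(\omega)L^{m_0d}.
\]

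\textbf{Step 2: insert into Lemma \ref{lem:l-33}.} Substituting these expressions, the bracket becomes
\[
\frac{A_0D_0}{L^{M\alpha}}\,n(\omega)L^{m_0 d}\Big(1-\frac{2A_0D_0}{\widetilde C_1}\Big)=\frac{A_0D_0}{L^{M\alpha}}\,n(\omega)L^{m_0 d}\cdot\frac12 .
\]
Here we used the choice \eqref{per-d0}, which gives $2A_0D_0/\widetilde C_1=1/2$. This cancellation is exactly why $D_0$ was chosen as it was.

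\textbf{Step 3: use the event $\mathcal A_{M,\delta}$.} On $\mathcal A_{M,\delta}$ we have $n(\omega)L^{m_0d}\ge\delta L^{Md}$. Dividing by $L^{Md}$ therefore yields
\[
\lambda_1^{N,M,\widetilde V_M^\omega}\ge\frac{A_0D_0\delta}{2L^{M\alpha}}=\frac{\widetilde C_1\delta}{8L^{M\alpha}}.
\]
Together with \eqref{eq:ineq_eigen}, this gives the claim.

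The argument is essentially computational. The only point needing care is checking that the hypotheses of Lemma \ref{lem:l-33} hold. Lemma \ref{lem:l-33} is stated under (\ref{W1}.a), but in fact it only uses the structure of $\widetilde V_M^\omega$ and $M\ge M_1$. So nothing beyond (\ref{W2}.a) and \eqref{B} is needed.
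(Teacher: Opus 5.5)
Your proposal is correct and follows the paper's proof exactly. Like the paper, you compute both integrals of $\widetilde V_M^\omega$ in terms of the number of occupied $m_0$-complexes, insert them into Lemma \ref{lem:l-33}, use $2A_0D_0/\widetilde C_1=1/2$ from \eqref{per-d0} and the count bound on $\mathcal A_{M,\delta}$, and pass to $\overline V_M^\omega$ via \eqref{eq:ineq_eigen}. Your side remark is also accurate: the proof of Lemma \ref{lem:l-33} uses only the explicit bounded form of $\widetilde V_M^\omega$ and $M\ge M_1$, not (\ref{W1}.a).
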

\noindent{Proof. }
From the definition of $\widetilde {V}^\omega_{M}$ we have
\begin{eqnarray*}
\int_{\mathcal{K}^{\left\langle M\right\rangle}} \widetilde {V}^\omega_{M}(x){ \rm d}\mathfrak{m}(x) & = & \frac{A_0D_0}{L^{M\alpha}}\int_{\mathcal{K}^{\left\langle M\right\rangle}} \sum_{\mathcal T_{m_0}\ni\Delta\subset \mathcal{K}^{\langle M\rangle} }{q_\Delta}(\omega)\mathbf 1_{\Delta \setminus \mathcal V^{\langle \infty\rangle}_{m_0}}(x) { \rm d}\mathfrak{m}(x)\\
& = & \frac{A_0D_0}{L^{M\alpha}}\cdot L^{m_0d} \cdot\sum_{\mathcal T_{m_0}\ni\Delta\subset \mathcal{K}^{\langle M\rangle} }{q_\Delta}(\omega).
\end{eqnarray*}
Similarly,
$$
\int_{\mathcal{K}^{\left\langle M\right\rangle}} (\widetilde{V}^{\omega}_{M}(x))^2{ \rm d}\mathfrak{m}(x) = \left(\frac{A_0D_0}{L^{M\alpha}}\right)^2 \cdot L^{m_0d} \cdot \sum_{\mathcal T_{m_0}\ni\Delta\subset \mathcal{K}^{\langle M\rangle} }{q_\Delta}(\omega).
$$
Therefore,  by using \eqref{eq:ineq_eigen}, the estimate from Lemma \ref{lem:l-33}, the assumption that $\omega \in \mathcal{A}_{M,\delta}$ and \eqref{per-d0}, we get
\begin{eqnarray*}
\lambda_1^{N,M,{\overline{V}}_{M}^{\, \omega}} & \geq & \frac{L^{m_0d}}{L^{Md}}\left[ \frac{A_0D_0}{L^{M\alpha}}\sum_{\mathcal T_{m_0} \ni\Delta\subset \mathcal{K}^{\langle M\rangle} }{q_\Delta}(\omega) - \frac{2\left(\frac{A_0D_0}{L^{M\alpha}}\right)^2 \displaystyle\sum_{\mathcal T_{m_0} \ni\Delta\subset \mathcal{K}^{\langle M\rangle} }{q_\Delta}(\omega)}{\widetilde{C}_1 L^{-M\alpha }}\right]\\
& = & \frac{1}{L^{(M-m_0)d}} \cdot \frac{1}{L^{M\alpha}}\cdot A_0D_0 \cdot \left(1-\frac{2A_0D_0}{\widetilde{C}_1 }\right) \cdot\sum_{\mathcal T_{m_0} \ni\Delta\subset \mathcal{K}^{\langle M\rangle} }{q_\Delta}(\omega)\\
& \geq & \frac{\delta A_0D_0}{2L^{M\alpha}} = \frac{\widetilde{C}_1\delta }{8L^{M\alpha}}.
\end{eqnarray*}
This completes the proof.
\hfill$\square$

\medskip

Before we complete the proof of Theorem (\ref{th:main}), we recall
a classical Bernstein-type inequality (see e.g.\ \cite[Lemma 3.5]{bib:KaPP3}).
\begin{proposition}\label{prop:lemma-35}
Let $(\Omega, \mathcal{F}, \mathbb{P})$ be a given probability space and let $S_n: \Omega \rightarrow \mathbb{R}$ be a random variable with the binomial distribution $B(n,p), n \geq 1, p \in (0,1).$ Then, for any $p,\gamma \in (0,1)$ such that $\gamma > p$,
\begin{equation}\label{eq:lemma-35}
\mathbb{P}\left[S_n \geq \gamma n \right] \leq \left(\left(\frac{1-p}{1-\gamma}\right)^{1-\gamma}\left(\frac{p}{\gamma}\right)^\gamma\right)^n.
\end{equation}
\end{proposition}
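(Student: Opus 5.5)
The plan is the standard Chernoff argument. Write $S_n=\sum_{i=1}^n \xi_i$ with $\xi_i$ i.i.d.\ Bernoulli($p$), so that $\mathbb{E}\, e^{\theta S_n}=(1-p+pe^{\theta})^n$. Markov's inequality applied to $e^{\theta S_n}$ gives, for every $\theta>0$,
\[
\mathbb{P}[S_n\geq \gamma n]\leq e^{-\theta\gamma n}\,\mathbb{E}\, e^{\theta S_n} = \left(e^{-\theta\gamma}(1-p+pe^{\theta})\right)^n .
\]
The problem is thus reduced to minimizing the single-trial quantity $f(\theta)=e^{-\theta\gamma}(1-p+pe^{\theta})$ over $\theta>0$.

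Setting the derivative of $\log f$ to zero gives
\[
\frac{pe^{\theta}}{1-p+pe^{\theta}}=\gamma, \qquad\text{that is,}\qquad e^{\theta^*}=\frac{\gamma(1-p)}{p(1-\gamma)}.
\]
Since $\gamma>p$, we have $\gamma(1-p)>p(1-\gamma)$, so $e^{\theta^*}>1$ and hence $\theta^*>0$ is admissible. This is the only place where the hypothesis $\gamma>p$ enters.

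At the optimizer, $1-p+pe^{\theta^*}=(1-p)\bigl(1+\tfrac{\gamma}{1-\gamma}\bigr)=\tfrac{1-p}{1-\gamma}$. Therefore
\[
f(\theta^*)=\left(\frac{p(1-\gamma)}{\gamma(1-p)}\right)^{\gamma}\frac{1-p}{1-\gamma}=\left(\frac{1-p}{1-\gamma}\right)^{1-\gamma}\left(\frac{p}{\gamma}\right)^{\gamma}.
\]
Raising this to the $n$-th power gives \eqref{eq:lemma-35}.

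There is no real obstacle in this argument. The only point requiring care is confirming that $\theta^*>0$, which follows directly from $\gamma>p$ as noted above.
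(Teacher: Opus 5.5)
Your Chernoff argument is correct: the optimizer $\theta^*$ with $e^{\theta^*}=\gamma(1-p)/(p(1-\gamma))$ is admissible ($\theta^*>0$) precisely because $\gamma>p$, and your evaluation of $f(\theta^*)$ checks out. You also do not need to realize $S_n$ as a sum of Bernoulli variables on the given space, since $\mathbb{E}\,e^{\theta S_n}=(1-p+pe^{\theta})^n$ follows from the binomial law alone.

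The paper does not prove this proposition itself; it quotes the classical bound from \cite[Lemma 3.5]{bib:KaPP3}, and your computation is the standard proof of that bound.
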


\medskip

\subsection*{STEP 3. Final estimates.}

From \eqref{eq:trace-estimate-alloy} we have:
\begin{equation}\label{eq:trace-estimate-rewrited}
\mathbb L(t) \leq {C}\mathbb{E}^{\mathbb{Q}}\left[{\rm e}^{-(t-1)\lambda_1^{N,M,{\overline{V}}_{M}^{\, \omega}}}; \mathcal{A}_{M,\delta} \right] + {C}\mathbb{Q}\left[\mathcal{A}^c_{M,\delta}\right], \qquad t > 1, \qquad M \geq  m_0.
\end{equation}
First note that by Lemma \ref{lem:34} we get
$$
\mathbb{E}^{\mathbb{Q}}\left[{\rm e}^{-(t-1)\lambda_1^{N,M,{\overline{V}}_{M}^{\, \omega}}}; \mathcal{A}_{M,\delta} \right] \leq {\rm e}^{-\frac{\widetilde{C}_1\delta }{8L^{M\alpha}}(t-1)} \leq c_1 {\rm e}^{-\frac{c_2 \delta }{L^{M\alpha} }t}, \quad t > 1, \qquad M \geq M_1,
$$
with absolute constants $c_1, c_2 >0$.

To estimate the second term in \eqref{eq:trace-estimate-rewrited}, observe that for $M \geq M_1,$
\begin{eqnarray*}
\mathcal{A}^{c}_{M,\delta} & = & \left\{ \omega: \#\left\{\mathcal T_{m_0}\ni\Delta\subset \mathcal{K}^{\langle M\rangle} : q_\Delta(\omega) = 1 \right\} < \delta \cdot L^{(M-m_0)d}\right\}\\
& = & \left\{ \omega: \#\left\{\mathcal T_{m_0}\ni\Delta\subset \mathcal{K}^{\langle M\rangle} : q_\Delta(\omega) = 0 \right\} \geq(1-\delta)  \cdot L^{(M-m_0)d}\right\}
\end{eqnarray*}
(recall that $L^{(M-m_0)d}$ is the number of $m_0$-complexes included in $\mathcal{K}^{\langle M\rangle}).$
We set
$$
 p_{\nu} := \mathbb Q[q_\Delta=0]=\mathbb Q\big[\text{no Poisson points in} \ \Delta \setminus \cV^{\langle \infty\rangle}_{m_0}\big] = {\rm e}^{-\nu L^{m_0d}}.
$$
Fix $\nu_0>0.$
Since $\nu \mapsto p_{\nu}$ is a decreasing function and $\lim_{\delta \rightarrow 0}\frac{1}{1-\delta}\left(\frac{1}{\delta}\right)^\frac{\delta}{1-\delta} = 1,$ we can also choose $\delta \in (0,1)$ small enough such that for all $\nu \geq \nu_0$
$$
\delta < 1 - p_{\nu_0} \leq 1 - p_{\nu}
\qquad \text{and} \qquad
\frac{1}{1-\delta}\left(\frac{1}{\delta}\right)^\frac{\delta}{1-\delta} \cdot \sqrt{p_{\nu}} \leq \frac{1}{1-\delta}\left(\frac{1}{\delta}\right)^\frac{\delta}{1-\delta} \cdot \sqrt{p_{\nu_0}} < 1.
$$
By applying Proposition  \ref{prop:lemma-35} with $n =  L^{(M-m_0)d}$, $p = p_{\nu}$ and $\gamma = (1-\delta) > p_{\nu}$, we then get
\begin{align*}
\mathbb{Q}\left[\mathcal{A}^{c}_{M,\delta}\right]
& \leq \left[\left(\frac{1-p}{\delta}\right)^\delta \left(\frac{p}{1-\delta}\right)^{1-\delta} \right]^{ L^{(M-m_0)d}}\\
& \leq \left[\frac{1}{1-\delta}\left(\frac{1}{\delta}\right)^\frac{\delta}{1-\delta} \cdot {p}\right]^{(1-\delta) \cdot L^{(M-m_0)d}} \\
& \leq {\rm e}^{\frac{(1-\delta) }{2} L^{(M-m_0)d} \ln p_{\nu}}\\
& =  {\rm e}^{-\nu \frac{1-\delta}{2}L^{Md}}.
\end{align*}
Therefore we obtain that
$$
\mathbb L(t) \leq  c_3({\rm e}^{-\frac{c_4t}{L^{M\alpha}}} +{\rm e}^{-c_5\nu L^{Md}}), \quad t>0, \ \ M \geq M_1, \ \ \nu \geq \nu_0,
$$
with an absolute $c_3>0$ and $c_4=c_4(\nu_0)>0$, $c_5=c_5(\nu_0)>0$.

To minimize this bound over $M,$ we  find $M=M(t)$ in such a manner that the two terms are roughly equal. We choose the  unique $M=M(t)$ for which
\begin{equation}\label{eq:M(t)}
\frac{c_5}{c_4}L^{M(d+\alpha )}\leq \frac{t}{\nu} < \frac{c_5}{c_4}L^{(M+1)(d+\alpha )}.
\end{equation}
For this choice of $M$ we get, with some $c_6>0,$
$$
\frac{c_4t}{L^{M\alpha}} \geq c_5\nu L^{Md}\geq  c_6t^{\frac{d}{d+\alpha}}\nu^{\frac{ \alpha}{d+ \alpha}}
$$
so that finally, for sufficiently large $t$ and some  constants $c_7=c_7(\nu_0),c_8=c_8(\nu_0)>0$
$$
\mathbb L(t) \leq c_7{\rm e}^{-c_8t^{\frac{d}{d+ \alpha}}\nu^{\frac{ \alpha}{d+\alpha}}},
$$
whenever $\nu \geq \nu_0$. This gives the assertion in Theorem \ref{th:main} with $C=c_8$ finishing the proof.

\section{ Lower bound for the Laplace transform and final estimates of the IDS} \label{sec:LS_bounded}
In this section, we find the lower estimate for the Laplace transform $\mathbb L(t)$ of the IDS for a class of Poisson potentials with single-site profiles $W$ of bounded  support, i.e.\ we assume that (\ref{W2}.b) holds. The two bounds on the Laplace transform $\mathbb L(t)$ will then be transformed to the asymptotic behaviour of the IDS via Fukushima's Tauberian theorem of exponential type.

We now take advantage of the fact that the integrated density of states arises also as the limit of finite-volume expressions built for operators  with Dirichlet conditions. More precisely, the Laplace transform $\mathbb L(t)$ of the IDS is given by the formula
\begin{equation}\label{eq:L_t_inty_lower}
\mathbb L(t)=\lim_{M\to\infty}\mathbb E^{\mathbb Q}\mathbb L_M^{D,V^{\omega}}(t)
\end{equation}
where
$$
\mathbb L_M^{D,V^{\omega}}(t) = \frac{1}{L^{Md}}\int_{\mathcal K^{\langle M\rangle}}p(t,x,x)\mathbf{E}_{x,x}^t \left[ {\rm e}^{-\int_0^t V^{\omega}(X_s){\rm d}s}\mathbf{1}_{\{\tau_{\mathcal K^{\langle M\rangle}} > t\}}\right]\mu({\rm d}x)
$$
and $\tau_{A}$ is the first exit time of the process from the set $A,$ i.e.  $\tau_{A} := \inf \{t > 0: X_t \notin A\}.$

\begin{theorem}\label{th:main-2}
Under the assumptions \eqref{B}, \eqref{W1} and \textup{(\ref{W2}.b)}, there exists a constant $C'>0$ such that for every $\nu >0$ we have
\begin{equation*}\label{eq:lower-poiss-1}
\liminf_{t\to\infty} \frac{\log \mathbb L(t)}{t^{\frac{d}{d+\alpha }}}\geq -C'\nu^{\frac{\alpha }{d+\alpha }}.
\end{equation*}
\end{theorem}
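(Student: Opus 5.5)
Starting from \eqref{eq:L_t_inty_lower}, I will bound $\mathbb E^{\mathbb Q}\mathbb L_M^{D,V^\omega}(t)$ from below by restricting to a good event: the Poisson cloud leaves a large empty region around a chosen complex, and the process stays in that region. Fix $R\in\mathbb Z_+$, to be chosen as a function of $t$. Pick inside $\cK^{\langle M\rangle}$ (with $M\gg R$) the family of $R$-complexes $\Delta_R\subset\cK^{\langle M\rangle}$ that are not adjacent to the boundary; there are roughly $L^{(M-R)d}$ of them. For each such $\Delta_R$, consider the event $E_{\Delta_R}$ that no Poisson point falls into the $d_{M_0}$-neighbourhood of $\Delta_R$. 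This neighbourhood is contained in $\cC_{R}(\Delta_R)$ (the union of $R$-complexes meeting $\Delta_R$) as soon as $R\ge M_0$, and its measure is at most $c L^{Rd}$. Then $\mathbb Q(E_{\Delta_R})\ge e^{-c\nu L^{Rd}}$. By (\ref{W2}.b), on $E_{\Delta_R}$ we have $V^\omega\equiv 0$ on $\Delta_R$. Since all integrands are nonnegative, Fubini gives
\[
\mathbb E^{\mathbb Q}\mathbb L_M^{D,V^\omega}(t)\ge \frac{1}{L^{Md}}\sum_{\Delta_R}e^{-c\nu L^{Rd}}\int_{\Delta_R}p(t,x,x)\,\mathbf P^{x,x}_t[\tau_{\Delta_R}>t]\,\mathfrak m(\mathrm dx).
\]
The right-hand side equals, up to a factor bounded below, $e^{-c\nu L^{Rd}}L^{-Rd}\,\mathrm{Tr}\,T^{D,\Delta_R}_t$. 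Here $T^{D,\Delta_R}_t$ is the killed semigroup on a single $R$-complex, and by symmetry all such complexes give the same trace.

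Next I bound the trace from below by $e^{-t\lambda_1^{D}(\Delta_R)}\|\varphi_1\|_\infty^{-2}$-type quantities, or more simply by $\mathrm{Tr}\,T^{D,\Delta_R}_t\ge e^{-t\lambda_1^{D,R}}$, where $\lambda_1^{D,R}$ is the Dirichlet ground state of $\phi(-\cL)$ killed outside $\cK^{\langle R\rangle}$. The key analytic input is $\lambda_1^{D,R}\le C L^{-R\alpha}$ for large $R$. I would obtain this variationally. Take a test function $f$ supported in $\cK^{\langle R\rangle}$, namely the scaled Dirichlet eigenfunction of $-\cL$ on $\cK^{\langle R\rangle}$, with $\langle -\cL f,f\rangle = c L^{-Rd_w}\|f\|^2$. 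By Jensen's inequality for the concave $\phi$ under the spectral measure of $f$, $\langle \phi(-\cL)f,f\rangle\le \phi(cL^{-Rd_w})\|f\|^2\le C_2 c^{\alpha/d_w}L^{-R\alpha}\|f\|^2$ by (\ref{B}.b). For a subordinate process killed on exiting, the Dirichlet form of the killed process restricted to functions supported in the set coincides with the form of $\phi(-\cL)$; hence $\lambda_1^{D,R}\le CL^{-R\alpha}$.

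Combining these bounds gives
\[
\mathbb L(t)\ge c\,L^{-Rd}\exp\bigl(-c\nu L^{Rd}-CtL^{-R\alpha}\bigr).
\]
I will pick $R=R(t)$ with $L^{R(d+\alpha)}\asymp t/\nu$, which makes both exponents of order $t^{d/(d+\alpha)}\nu^{\alpha/(d+\alpha)}$. The polynomial prefactor $L^{-Rd}$ is negligible on the logarithmic scale, which yields the claimed liminf with $C'$ independent of $\nu$. Boundary effects require $M\to\infty$ first, with $R$ fixed, before letting $t\to\infty$; this is consistent with \eqref{eq:L_t_inty_lower}.

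The main obstacle I expect is justifying the trace step rigorously. This means showing that $\frac1{\mathfrak m(\Delta_R)}\int_{\Delta_R}u^{D}(t,x,x)\,\mathrm dx\ge L^{-Rd}e^{-t\lambda_1^{D,R}}$, which uses $\mathrm{Tr}\ge e^{-t\lambda_1}$ directly, and verifying that the killed subordinate form is comparable to $\phi(-\cL)$ on the test function. The latter holds because the killed process $X$ (subordinate first, then killed) has a form equal to the form of $\phi(-\cL)$ restricted to functions vanishing off the set.
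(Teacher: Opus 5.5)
Your proof is correct and follows the paper's argument essentially step by step. You restrict the Dirichlet trace to sub-complexes, use the event of an empty $d_{M_0}$-neighbourhood (probability at least $e^{-c\nu L^{Rd}}$) so that $V^\omega$ vanishes there by \textup{(\ref{W2}.b)}, bound the killed trace from below via the principal eigenvalue, compare that eigenvalue with $\phi$ of the Brownian Dirichlet eigenvalue, use scaling and \textup{(\ref{B}.b)}, and optimise with $L^{R(d+\alpha)}\asymp t/\nu$. The only differences are these: you prove the comparison $\lambda_1^{X,\Delta_R}\le\phi(\lambda_1^{Z,\Delta_R})$ yourself, by Jensen's inequality on the spectral measure of the Brownian eigenfunction, where the paper cites Chen--Song, Theorem 3.4; this makes the bound uniform over all $R$-complexes, so your ``by symmetry'' remark is not needed. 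You also correctly keep the prefactor $L^{-Rd}$ from $\mathrm{Tr}\ge e^{-t\lambda_1}$, which the paper's write-up drops but which is harmless on the logarithmic scale.
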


\begin{proof}
Before we proceed, we introduce additional notation. For a single complex $\Delta$ (of any size)  we define
\begin{equation}\label{eq:el-delta}
\mathbb L^{D,V^{\omega}}_{\Delta}(t) =\frac{1}{\mathfrak{m}(\Delta)}\int_\Delta p(t,x,x) \mathbf E_{x,x}^t \left[{\rm e}^{-\int_0^t V^\omega (X_s){\rm d}s}\mathbf 1_{\{\tau_{\Delta}>t\}}\right]\mathfrak{m}({\rm d}x).
\end{equation}
Now fix $M\in\mathbb Z_+$, $M \geq M_0$, where $M_0$ comes from our assumption (\ref{W2}.b).  By (\ref{eq:L_t_inty_lower}) we have
$$
\mathbb L(t) = \lim_{k \rightarrow \infty}\mathbb E^{\mathbb Q} \mathbb L_{M+k}^{D,V^{\omega}}(t).
$$
For given $k \geq 1,$ the set $\mathcal{K}^{\langle M + k \rangle}$ contains $N^{k}$ copies of $\mathcal{K}^{\langle M \rangle}$ meeting only through their vertices,  of size $L^M$ each. Denote them $\Delta_1,\ldots,\Delta_{N^k}.$ As $\Delta_i \subset \mathcal{K}^{\langle M + k \rangle}$ we get, since $L^d=N$:\
\begin{eqnarray}\label{eq:dd}
\mathbb E^{\mathbb Q} \mathbb L_{M+k}^{D,V^{\omega}}(t)  & = & \frac{1}{N^{M + k}}\mathbb E^{\mathbb Q}\int_{\mathcal K^{\langle M + k\rangle}}p(t,x,x)\mathbf{E}_{x,x}^t \left[ {\rm e}^{-\int_0^t V^{\omega}(X_s){\rm d}s}\mathbf{1}_{\{\tau_{\mathcal K^{\langle M + k\rangle}} > t\}}\right]\mathfrak{m}({\rm d}x) \nonumber \\
& = & \frac{1}{N^{M + k}}\sum_{i=1}^{N^k}\int_{\Delta_i}p(t,x,x)\mathbb{E^{Q}}\mathbf{E}_{x,x}^t \left[ {\rm e}^{-\int_0^t V^{\omega}(X_s){\rm d}s}\mathbf{1}_{\{\tau_{\mathcal K^{\langle M + k\rangle}} > t\}}\right]\mathfrak{m}({\rm d}x)\nonumber \\
& \geq & \frac{1}{N^{M + k}}\sum_{i=1}^{N^k}\int_{\Delta_i}p(t,x,x)\mathbb{E^{Q}}\mathbf{E}_{x,x}^t \left[ {\rm e}^{-\int_0^t V^{\omega}(X_s){\rm d}s}\mathbf{1}_{\{\tau_{\Delta_i} > t\}}\right]\mathfrak{m}({\rm d}x)\nonumber \\
&=& \frac{1}{N^k}\sum_{i=1}^{N^k} \mathbb E^{\mathbb Q} \mathbb L^{D,V^{\omega}}_{\Delta_i}(t) \nonumber \\
& \geq & \inf_{i}\mathbb{E^{Q}}\mathbb L^{D,V^{\omega}}_{\Delta_i}(t).
\end{eqnarray}
We fix some $i_0$ and let $\mathcal{M}_{i_0}=\{\omega:\, \mbox{no Poisson points fell into } \Delta_{i_0}^{M_0} \}$, where $\Delta_{i_0}^{M_0} $ denotes the $1-$vicinity of $\Delta_{i_0}$ in the metric $d_{M_0}$ (i.e.\ the complex $\Delta_{i_0}$ with copies of $\mathcal K^{\langle M_0\rangle}$ attached at its vertices). In particular,
\begin{equation}\label{eq:calc_m_i-0}
\mathbb{E^{Q}}\mathbb L^{D,V^{\omega}}_{\Delta_{i_0}}(t) \geq \mathbb{E^{Q}}\left[\mathbb L^{D,V^{\omega}}_{\Delta_{i_0}}(t)\mathbf{1}_{\mathcal{M}_{i_0}}\right].
\end{equation}
Let us fix a trajectory $X_s$ of the process starting at $x \in \Delta_{i_0}$ and not leaving the set $\Delta_{i_0}$ up to time $t$.  For $\omega \in \mathcal{M}_{i_0}$ we have that $V^\omega(X_s)=0$, $s \leq t$ (the process stayed in $\Delta_{i_0},$ and no Poisson points fell inside the $M_0-$neighbourhood of this set), and, in consequence, ${\rm e}^{-\int_0^t V^\omega(X_s){\rm d}s}=1$. 
Therefore $\mathbf P_{x,x}^t$ -- almost surely on the set $\{\tau_{\Delta_{i_0}} > t\}$,
\begin{equation}\label{eq:tal}
\mathbb{E^Q}\left[{\rm e}^{-\int_0^t V^\omega(X_s){\rm d}s}\mathbf{1}_{\mathcal{M}_{i_0}} \right]=\mathbb{E^Q}\left[\mathbf{1}_{\mathcal{M}_{i_0}} \right]=\mathbb{Q}[\mathcal{M}_{i_0}],
\end{equation}
  Using (\ref{eq:calc_m_i-0}) we get
\begin{equation}\label{eq:l2}
\mathbb{E^{Q}}\mathbb L^{D,V^{\omega}}_{\Delta_{i_0}}(t)\geq \frac{1}{\mathfrak{m}(\Delta_{i_0})}\int_{\Delta_{i_0}}p(t,x,x)\mathbf{E}_{x,x}^t \left[ \mathbf{1}_{\{\tau_{\Delta_{i_0}} > t\}}\cdot \mathbb{Q}[\mathcal{M}_{i_0}] \right]\mathfrak{m}({\rm d}x).
\end{equation}
As
$$
\mathbb{Q}[\mathcal{M}_{i_0}]=\mathbb{Q}[\mbox{no Poisson points in } \Delta_{i_0}^{M_0}]={\rm e}^{-\nu \mathfrak{m}(\Delta_{i_0}^{M_0})} \geq {\rm e}^{-\nu(L^{Md} +  c_1L^{M_0 d})}
$$
($c_1$ is an absolute constant depending on the geometry of  the fractal only) we obtain
\begin{equation}\label{eq:ddd}
\mathbb{E^{Q}}[\mathbb L^{D,\Delta_{i_0}}(t,\omega)]\geq \left[\frac{1}{\mu(\Delta_{i_0})}\int_{\Delta_{i_0}}p(t,x,x)\mathbf{P}_{x,x}^t \left[ \tau_{\Delta_{i_0}} > t\right]\mathfrak{m}({\rm d}x)\right] \cdot  {\rm e}^{-\nu(L^{Md} +  c_1L^{M_0 d})}.
\end{equation}

The expression $\left[\frac{1}{\mu(\Delta_{i_0})}\int_{\Delta_{i_0}}p(t,x,x)\mathbf{P}_{x,x}^t \left[ \tau_{\Delta_{i_0}} > t\right]\mathfrak{m}({\rm d}x)\right]$ is just the averaged trace of the operator $T_t^{D,\Delta_{i_0}}$ which implies that it is not bigger than ${\rm e}^{-t\lambda_1(\Delta_{i_0})}$ (the notation is self-explaining). From \cite[Theorem 3.4]{bib:Chen-Song} we have $\lambda_1(\Delta_{i_0}) \leq \phi(\lambda_1(\Delta_{i_0}))$, where $\lambda_1(U)$ denotes
the principal Dirichlet eigenvalue of the Brownian motion killed upon exiting $U$. Since Brownian motions on $\Delta_{i_0}$  and $\mathcal{K}^{\langle M \rangle}$ are indistinguishable up to respective exit times of $\Delta_{i_0}, \mathcal{K}^{\langle M \rangle},$ one has
$\lambda_1(\Delta_{i_0}) = \lambda_1(\mathcal{K}^{\langle M \rangle})$ and from the Brownian scaling we further have  $\lambda_1(\mathcal{K}^{\langle M \rangle}) = \lambda_1(L^M\mathcal{K}^{\langle 0 \rangle}) = L^{-Md_w}\lambda_1(\mathcal{K}^{\langle 0 \rangle})$. Using (\ref{eq:dd}), (\ref{eq:ddd}) and the scaling of the principal eigenvalues  we obtain that for any $k$
\begin{equation}\label{eq:L-M-k}
\mathbb E^{\mathbb Q}[\mathbb L_{M+k}^{D}(t,\omega)] \geq  \exp \left\{ -t\phi\left(\frac{1}{L^{Md_w}}\lambda_1^{BM}(\mathcal{K}^{\langle 0 \rangle})\right) -\nu(L^{Md} +  c_1L^{M_0 d}) \right\}.
\end{equation}
By using our assumption (\ref{B}.b)we obtain for sufficiently large $M$
\begin{equation}\label{eq:scaling-uses}
\phi\left(\frac{1}{L^{Md_w}}\lambda_1(\mathcal{K}^{\langle 0 \rangle})\right) \leq C_2\cdot L^{-M\alpha }\left( \lambda_1(\mathcal{K}^{\langle 0 \rangle})\right)^{\alpha/d_w}=: c_2 L^{-M\alpha}.
\end{equation}
It then follows from (\ref{eq:L-M-k}) that
\begin{equation}\label{eq:L-M-k-2}
\mathbb E^{\mathbb Q}[\mathbb L_{M+k}^{D}(t,\omega)] \geq  \exp \left\{ -c_3tL^{-M\alpha } -  (1+c_3)  \nu L^{Md} \right\},
\end{equation}
for $M$'s large enough.
This bound is independent of $k,$ therefore passing with $k$ to infinity in (\ref{eq:L_t_inty_lower}) we also obtain
\begin{equation}\label{eq:nowe1}
\mathbb L(t)\geq  \exp \left\{ -c_3tL^{-M\alpha} -c_4\nu L^{Md} \right\},
\end{equation}
  as long as $M$ is sufficiently large.  To conclude, for sufficiently large $t$ choose $M = M(t)$ to be the unique integer satisfying
\begin{equation}\label{eq:lower-Mt}
L^M \leq \left(\frac{t}{\nu}\right)^{\frac{1}{d + \alpha }} < L^{M + 1}.
\end{equation}
Inserting this value of  $M$  into (\ref{eq:nowe1}) and using the inequalities in (\ref{eq:lower-Mt}), after some elementary calculations we end up with
$$
\liminf_{t\to\infty}\frac{\log \mathbb L(t)}{t^{\frac{d}{d + \alpha }}\nu^{\frac{\alpha}{d + \alpha }}}\geq -C',
$$
with a positive constant $C'$. The proof is complete.
\end{proof}

We are now ready to give the argument for  the main result. 
\begin{proof}[Proof of Theorem \ref{th:main_IDS}]
If follows from respective estimates in Theorems \ref{th:main} and \ref{th:main-2} by means of the exponential Tauberian theorem of Fukushima \cite[Theorem 2.1]{bib:F}. 
\end{proof}

\appendix 
\section{Proof of the existence of the IDS} \label{sec:appA}
In this appendix, we continue the discussion from Section \ref{sec:IDS_ex}. We outline the argument leading to the proof of Theorem \ref{thm:IDS} and provide precise references.

Recall that for $M \in \mathbb{Z}$, $V_M^\omega$ denotes the periodized random fields introduced in \eqref{eq:potendef-per}. 
The advantage of using $V_M^\omega$ instead of $V^\omega$, and $X^M$ instead of $X$, 
is that in this setting the expected values of the Laplace transforms are monotone  (Lemma \ref{lem:monoto}), 
which facilitates  their convergence  to a finite limit $\mathbb L(t)$ (Theorem~\ref{thm:convergence_with_star}). 

In the remaining part of the proof, we justify the convergence of 
$\mathbb{E}^\mathbb{Q}  \mathbb L_M^{D,V^\omega}(t)$, 
$\mathbb{E}^\mathbb{Q}  \mathbb L_M^{D,V^\omega_M}(t)$, 
and $\mathbb{E}^\mathbb{Q}  \mathbb L_M^{N,V^\omega}(t)$ 
to a common limit. Finally, we establish the almost sure convergence of 
$\mathbb L^{D,V^\omega}_M(t)$ and $\mathbb L^{N,V^\omega}_M(t)$ 
to the nonrandom limit $\mathbb L(t)$.

Our first step will be to show that once the path of the initial subordinate Brownian motion $X$ on $\cK^{\langle \infty\rangle}$ and $t>0$ are fixed, then the expectations $\mathbb{E}^{\mathbb{Q}} {\rm e}^{-\int_0^t V^{\omega}_M(\pi_M(X_s)){\rm d}s} = \mathbb{E}^{\mathbb{Q}} {\rm e}^{-\int_0^t V^{\omega}_M(X_s^M){\rm d}s}$ are monotone in $M.$

\begin{lemma}
\label{lem:monoto}
For every $t>0$ and  $M \in \mathbb{Z}_{+}$ we have
\begin{equation}
\label{eq:monoto}
\mathbb{E}^{\mathbb{Q}} {\rm e}^{-\int_0^t V^{\omega}_{M+1}(\pi_{M+1}(X_s)){\rm d}s} \leq \mathbb{E}^{\mathbb{Q}} {\rm e}^{-\int_0^t V^{\omega}_M(\pi_M(X_s)){\rm d}s} \ .
\end{equation}
\end{lemma}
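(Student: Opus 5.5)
The plan is to compute both sides of \eqref{eq:monoto} explicitly with the Laplace functional of the Poisson process, and then compare the resulting exponents using (\ref{W1}.c). The path $(X_s)_{0\le s\le t}$ is fixed throughout.

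First, rewrite the time integral. By \eqref{eq:potendef-per} and Fubini (everything is nonnegative),
\[
\int_0^t V_M^\omega(\pi_M(X_s))\,{\rm d}s=\int_{\cK^{\langle M\rangle}} F_M(y)\,\mu^\omega({\rm d}y),\qquad F_M(y):=\int_0^t\sum_{y'\in\pi_M^{-1}(y)}W(\pi_M(X_s),y')\,{\rm d}s .
\]
The Laplace functional of a Poisson process with intensity $\nu\,\mathfrak m$ then gives
\[
\mathbb E^{\mathbb Q}\,{\rm e}^{-\int_0^t V_M^\omega(\pi_M(X_s))\,{\rm d}s}=\exp\Big(-\nu\int_{\cK^{\langle M\rangle}}\big(1-{\rm e}^{-F_M(y)}\big)\,\mathfrak m({\rm d}y)\Big).
\]
The same formula holds for $M+1$, with the integral taken over $\cK^{\langle M+1\rangle}$. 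It therefore suffices to prove
\[
\int_{\cK^{\langle M+1\rangle}}\big(1-{\rm e}^{-F_{M+1}(y)}\big)\,\mathfrak m({\rm d}y)\ \ge\ \int_{\cK^{\langle M\rangle}}\big(1-{\rm e}^{-F_{M}(y)}\big)\,\mathfrak m({\rm d}y).
\]

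Next, decompose $\cK^{\langle M+1\rangle}$ into the $N$ $M$-complexes $\Delta_{M,i}$. On each of them $\pi_M$ is a measure-preserving bijection onto $\cK^{\langle M\rangle}$, since it is a rotation composed with a translation. Vertices have measure zero and can be ignored. The left-hand side then becomes
\[
\int_{\cK^{\langle M\rangle}}\sum_{i=1}^N\big(1-{\rm e}^{-F_{M+1}(\widetilde\pi_{\Delta_{M,i}}(y))}\big)\,\mathfrak m({\rm d}y).
\]
Fix $y\in\cK^{\langle M\rangle}$ and $i$, and put $z=\widetilde\pi_{\Delta_{M,i}}(y)$, so that $\pi_M(z)=y$. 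The $\pi_M$-fibre of $y$ is the disjoint union over $i$ of the $\pi_{M+1}$-fibres of these lifts $z$. (This follows from the compatibility $\pi_M\circ\pi_{M+1}=\pi_M$, which is a consequence of the GLP construction.) Consequently $\sum_i F_{M+1}(\widetilde\pi_{\Delta_{M,i}}(y))$ has the same form as $F_M(y)$, except that $W(\pi_M(X_s),y')$ is replaced by $W(\pi_{M+1}(X_s),y')$.

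Condition (\ref{W1}.c) is used at this point, applied at each time $s$ with $x=X_s$. It is the statement that moving the evaluation point from $\pi_M(x)$ to $\pi_{M+1}(x)$ increases the fibre sum, which gives $\sum_i F_{M+1}(\widetilde\pi_{\Delta_{M,i}}(y))\ge F_M(y)$. To finish, I use the subadditivity of $u\mapsto 1-{\rm e}^{-u}$ on $[0,\infty)$ together with its monotonicity. These give $\sum_i(1-{\rm e}^{-a_i})\ge 1-{\rm e}^{-\sum_i a_i}\ge 1-{\rm e}^{-F_M(y)}$. Integrating over $y\in\cK^{\langle M\rangle}$ yields the desired inequality between the exponents, and hence \eqref{eq:monoto}.

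I expect the main obstacle to be the bookkeeping of fibres. One must check that $\pi_M^{-1}(y)=\bigsqcup_i\pi_{M+1}^{-1}(\widetilde\pi_{\Delta_{M,i}}(y))$ and that this matches exactly the form of (\ref{W1}.c), whose right-hand side uses the fibre of the lifted point. Rank multiplicities at vertices only affect a null set and can be disregarded. Condition (\ref{W1}.c) is stated only for sufficiently large $M$, so for small $M$ one either restricts to large $M$ (this suffices for the limit statement) or reads (\ref{W1}.c) as holding for all $M$.
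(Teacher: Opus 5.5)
Your proposal is correct and follows essentially the same route as the paper. The paper reduces the claim to the disjoint fibre decomposition $\pi_M^{-1}(y)=\bigsqcup_{i=1}^N\pi_{M+1}^{-1}(\pi_{\Delta_{M,i}}(y))$ for non-vertex $y$, and otherwise defers to the argument of \cite{bib:KaPP2}: the Poisson Laplace functional, (\ref{W1}.c) applied pointwise in $s$, and subadditivity of $u\mapsto 1-{\rm e}^{-u}$. Your caveat is also fair: (\ref{W1}.c) is only assumed for large $M$, so the lemma as stated for all $M\in\mathbb Z_+$ either tacitly needs it for every $M$ or holds only for large $M$, and the latter suffices for the limit.
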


\begin{proof}
The proof follows analogously to that of  inequality (3.8) in the proof of  \cite[Theorem 3.1]{bib:KaPP2}. For $y \in \cK^{\langle M \rangle} \backslash \cV^{\langle M \rangle}_M$ we have $\sum_{i=1}^{N} \pi_{M+1}^{-1}(\pi_{\Delta_{M,i}}(y)) = \pi_M^{-1}(y)$ and this sum is disjoint. In the proof of mentioned theorem we had $N=3$.
\end{proof}

The next step is to apply the monotonicity argument from Lemma \ref{lem:monoto} above to prove that the expectations $\mathbb{E}^{\mathbb{Q}} \mathbb L_{M}^{N,V^\omega_M} (t)$ converges, for every fixed $t>0$, to a finite limit $\mathbb L(t)$.
Here $\mathbb L_{M}^{N,V^\omega_M} (t,\omega)$ is the Laplace transform of the measure $\Lambda_N^{M,V^\omega_M},$ as in \eqref{eq:lmn}.

\begin{theorem} \label{thm:convergence_with_star}
Let $\cK^{\langle \infty \rangle}$ be an USNF with the GLP and let the assumptions \eqref{B} and \eqref{W1} hold. Then, for every $t>0$,
$$\mathbb{E}^{\mathbb{Q}}\mathbb L_{M}^{N,V^\omega_M}(t,\omega) \searrow \mathbb L(t) \quad  \text{as} \quad M \to \infty.$$
\end{theorem}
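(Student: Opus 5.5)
The plan is to show that $M\mapsto \mathbb{E}^{\mathbb{Q}}\mathbb L_M^{N,V^\omega_M}(t)$ is nonincreasing and nonnegative; a finite limit $\mathbb L(t)$ then exists automatically. First I would write the expected Laplace transform through the trace formula of Section~\ref{sec:IDS_ex}, with the reflected process realized as $X^M=\pi_M(X)$ (subordination commutes with $\pi_M$, since $X^M_t=\pi_M(Z_{S_t})$). This gives
\[
\mathbb{E}^{\mathbb{Q}}\mathbb L_M^{N,V^\omega_M}(t)=\frac{1}{L^{Md}}\int_{\cK^{\langle M\rangle}} p_M(t,x,x)\,\mathbf{E}^{x,x}_{M,t}\Big[\mathbb{E}^{\mathbb{Q}}{\rm e}^{-\int_0^t V^\omega_M(X^M_s)\,{\rm d}s}\Big]\mathfrak{m}({\rm d}x),
\]
where Fubini applies because the integrand is nonnegative. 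The Kato property of $V_M^\omega$ (Proposition~\ref{pro:katoclass}(c)) makes everything well defined.

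The key step is to express the level-$M$ bridge integral as an integral over paths of the free process $X$. Formula \eqref{eq:subrefldens} shows that $p_M(t,x,x)\mathbf{P}^{x,x}_{M,t}$ is the sum over $x'\in\pi_M^{-1}(x)$, weighted by ranks, of the image under $\pi_M$ of the free bridge measures $p(t,x,x')\mathbf{P}^{x,x'}_t$. So I would rewrite the quantity as
\[
\frac{1}{L^{Md}}\int_{\cK^{\langle M\rangle}}\sum_{x'\in\pi_M^{-1}(x)} p(t,x,x')\,\mathbf{E}^{x,x'}_t\,\Phi_M(X)\,\mathfrak{m}({\rm d}x),\qquad \Phi_M(X):=\mathbb{E}^{\mathbb{Q}}{\rm e}^{-\int_0^t V^\omega_M(\pi_M(X_s))\,{\rm d}s}.
\]
Vertex points carry rank weights, but they form an $\mathfrak{m}$-null set and can be ignored. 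Lemma~\ref{lem:monoto} gives $\Phi_{M+1}\le\Phi_M$ pathwise.

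It remains to compare the outer sums at levels $M$ and $M+1$. Decompose $\cK^{\langle M+1\rangle}=\bigcup_{i=1}^N\Delta_{M,i}$. For $x\in\Delta_{M,i}$, the fibre $\pi_{M+1}^{-1}(x)$ is contained in $\pi_M^{-1}(\pi_M(x))$. Moreover, the fibres $\pi_{M+1}^{-1}(\pi_{\Delta_{M,i}}(y))$, $i=1,\dots,N$, partition $\pi_M^{-1}(y)$; this is the identity used in Lemma~\ref{lem:monoto}. Fix $x$ and a target $x'$. By the GLP symmetries (rotations and translations preserve $\mathfrak{m}$, $p$, the bridges and, after periodization, the law of $V_M^\omega$), changing variables $x\mapsto\pi_{\Delta_{M,i}}(y)$ in each $\Delta_{M,i}$ turns the level-$(M+1)$ expression into
\[
\frac{1}{L^{(M+1)d}}\sum_{i=1}^N\int_{\cK^{\langle M\rangle}}\sum_{x'\in\pi_{M+1}^{-1}(\pi_{\Delta_{M,i}}(y))}p(t,\cdot,x')\,\mathbf{E}^{\cdot,x'}_t\Phi_{M+1}\,\mathfrak{m}({\rm d}y).
\]
After the translation of each starting point, this becomes the level-$M$ integrand, averaged over $N=L^d$ copies, with $\Phi_{M+1}$ in place of $\Phi_M$. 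Hence it is bounded by the level-$M$ expression.

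The main obstacle is this change of variables. One has to check that the bridge from $\pi_{\Delta_{M,i}}(y)$ to $x'$ corresponds, under an isometry of $\cK^{\langle\infty\rangle}$ respecting $\pi_M$, to a bridge from $y$ to some point of $\pi_M^{-1}(y)$. One also has to check that $\Phi_M$ is invariant under this isometry. The latter holds because the $\pi_M$-periodized Poisson field has a law invariant under maps commuting with $\pi_M$. If a global isometry is unavailable, I would avoid it and instead use the identity $\Phi_M(X)=\Phi_M(\widetilde{X})$ for any path $\widetilde{X}$ with $\pi_M(\widetilde{X})=\pi_M(X)$, together with the folding invariance of the free law, $\pi_M(X)\overset{d}{=}X^M$ from any starting point in the fibre. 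With monotonicity established, the limit $\mathbb L(t)\ge0$ exists and is finite by the uniform trace bound from \cite[Lemma 2.1(b)]{bib:HB-KK-MO-KPP}.
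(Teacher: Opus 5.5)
Your ingredients are the ones behind the paper's proof, which defers to \cite{bib:KaPP2} and \cite[Lemma 2.2]{bib:HB-KK-MO-KPP}. However, the decisive comparison between levels $M+1$ and $M$ is assembled incorrectly.

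The $i$-th summand of your change-of-variables display is the level-$(M+1)$ reflected bridge at $x_i:=\pi_{\Delta_{M,i}}(y)$. It is a sum over the fibre $\pi_{M+1}^{-1}(x_i)$, which is only part of $\pi_M^{-1}(y)$, and it carries its own diagonal term $p(t,x_i,x_i)$. So after moving the starting point it does not ``become'' the level-$M$ integrand; it is only dominated by it.

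Moving the starting point within a fibre is legitimate only for a complete $\pi_M$-fibre sum of a functional of $\pi_M\circ X$. It is not available for a partial sum, nor for $\Phi_{M+1}$, which depends on $\pi_{M+1}\circ X$; for $i\neq 1$ the points $x_i$ and $y$ lie in different $\pi_{M+1}$-fibres. Hence Lemma~\ref{lem:monoto} has to be applied before the relocation, not after.

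The partition of $\pi_M^{-1}(y)$ into the sets $\pi_{M+1}^{-1}(x_i)$ is the identity behind Lemma~\ref{lem:monoto}, not the one needed here. If you glued the $N$ pieces into one full fibre sum, the prefactor $L^{-(M+1)d}$ would leave you with $N^{-1}\mathbb{E}^{\mathbb{Q}}\mathbb L^{N,V^\omega_M}_M(t)$, a bound that would force the limit to vanish.

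Finally, $\cK^{\langle\infty\rangle}$ in general has no self-isometry carrying $\Delta_{M,i}$ onto $\cK^{\langle M\rangle}$; in the one-sided infinite Sierpi\'nski gasket the corner at the origin must be fixed. So your fallback, the folding invariance, is not optional: it is the argument.

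The correct chain is pointwise. For $x\in\cK^{\langle M+1\rangle}\setminus\cV^{\langle\infty\rangle}_M$,
\begin{align*}
\sum_{x'\in\pi_{M+1}^{-1}(x)}p(t,x,x')\,\mathbf{E}^{x,x'}_t\Phi_{M+1}
&\le\sum_{x'\in\pi_{M+1}^{-1}(x)}p(t,x,x')\,\mathbf{E}^{x,x'}_t\Phi_{M}\\
&\le\sum_{x'\in\pi_{M}^{-1}(\pi_M(x))}p(t,x,x')\,\mathbf{E}^{x,x'}_t\Phi_{M}.
\end{align*}
The first step is Lemma~\ref{lem:monoto}. The second adds the nonnegative terms indexed by $\pi_M^{-1}(\pi_M(x))\setminus\pi_{M+1}^{-1}(x)$. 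By folding invariance, the last sum equals $p_M(t,\pi_M(x),\pi_M(x))\,\mathbf{E}^{\pi_M(x),\pi_M(x)}_{M,t}\bigl[\mathbb{E}^{\mathbb{Q}}{\rm e}^{-\int_0^t V^\omega_M(X^M_s)\,{\rm d}s}\bigr]$, and this holds for every starting point $x$ in the fibre.

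Now integrate over $\cK^{\langle M+1\rangle}=\bigcup_{i=1}^N\Delta_{M,i}$. Each $\pi_M|_{\Delta_{M,i}}$ is a measure-preserving bijection onto $\cK^{\langle M\rangle}$, so the integral equals $N L^{Md}\,\mathbb{E}^{\mathbb{Q}}\mathbb L^{N,V^\omega_M}_M(t)$. Dividing by $L^{(M+1)d}=NL^{Md}$ gives $\mathbb{E}^{\mathbb{Q}}\mathbb L^{N,V^\omega_{M+1}}_{M+1}(t)\le\mathbb{E}^{\mathbb{Q}}\mathbb L^{N,V^\omega_M}_M(t)$.

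Each of the $N$ copies is separately dominated by a full level-$M$ integrand; that is what your ``averaged over $N$ copies'' must mean.
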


\begin{proof}
The proof follows closely the reasoning from the second part of the proof of \cite[Theorem 3.1]{bib:KaPP2} and uses equations from \cite[Lemma 2.2]{bib:HB-KK-MO-KPP}.
\end{proof}

 Next we show that the limit points of $\mathbb {E^Q}\mathbb L_{M}^{D,V^\omega}(t)$, $\mathbb {E^Q}\mathbb L_M^{D,V_M^\omega} (t),$ and $\mathbb {E^Q}\mathbb L_M^{N,V^\omega}(t)$ coincide with the limit points of $\mathbb {E^Q}\mathbb L_{M}^{N,V^\omega_M}(t)$  as $M \to \infty$. Finally, we also prove the lemma on variances of $\mathbb L_M^{D,V^\omega} (t)$ and $\mathbb L_M^{N,V^\omega} (t)$ which will allow us to establish the vague convergence of the empirical measures defined in \eqref{eq:lmd} and \eqref{eq:lmn}.

Proofs of Lemmas \ref{lem:commonlimit} and \ref{lem:variances} given below are very technical and they follow the steps and ideas from the proofs of \cite[Proposition 3.1 and Lemmas 3.1-3.2]{bib:KaPP2}. The main difference here is that the state space is now a general USNF. This causes some extra geometric issues, which are solved by using the graph distance (recall that the geodesic metric may not be defined at all) and the comparison principle from \cite[Lemma A.2]{bib:KOP}. Therefore, we will follow only the main steps of the proofs which are affected by these changes, focusing on the most critical differences.

\begin{lemma}
\label{lem:commonlimit}
Fix $t>0$. \begin{itemize}
\item[(a)] We have
\begin{equation*}
\sum_{M=1}^{\infty} \mathbb{E}^{\mathbb{Q}} \left(\mathbb L_M^{D,V^\omega} (t) - \mathbb L_M^{N,V^\omega}(t) \right)^2 < \infty;
\end{equation*}
in particular,
\begin{equation*}
\lim_{M \to \infty} \mathbb{E}^{\mathbb{Q}} \left(\mathbb L_M^{D,V^\omega} (t) - \mathbb L_M^{N,V^\omega} (t) \right)^2 = 0 .
\end{equation*}
\item[(b)] We have
\begin{equation*}
\lim_{M \to \infty} \mathbb{E}^{\mathbb{Q}} \left(\mathbb L_{M}^{D,V_M^\omega}(t)  - \mathbb L_{M}^{N,V^\omega_M} (t) \right) = 0 \quad
\text{and} \quad
\lim_{M \to \infty} \mathbb{E}^{\mathbb{Q}} \left( \mathbb L_M^{D,V^\omega} (t) - \mathbb L_{M}^{D,V_M^\omega} (t)\right) = 0 .
\end{equation*}
\end{itemize}
\end{lemma}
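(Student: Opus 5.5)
We follow the scheme of \cite[Proposition 3.1]{bib:KaPP2}: write every averaged trace via the bridge representation and compare integrands pointwise in $x$, splitting $\cK^{\langle M\rangle}$ into a \emph{bulk} and a \emph{boundary layer}. Fix $t>0$ and an intermediate scale $m=m(M)=\lfloor M/4\rfloor$. Let $B_M$ be the union of the $m$-complexes in $\cK^{\langle M\rangle}$ whose $d_m$-distance to $\cV_M^{\langle M\rangle}$ is at most $1$, and $I_M=\cK^{\langle M\rangle}\setminus B_M$ the interior. Since $\cV_M^{\langle M\rangle}$ has $k$ points and $\mathrm{rank}$ is bounded by $r_0$, we have $\mathfrak m(B_M)\le c\,L^{md}$, so $\mathfrak m(B_M)/\mathfrak m(\cK^{\langle M\rangle})\le cL^{-(M-m)d}$, which is summable in $M$. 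On $B_M$ all integrands are bounded by $p(t,x,x)$ or $p_M(t,x,x)$, which are bounded uniformly in $M$ by \cite[Lemma 2.1]{bib:HB-KK-MO-KPP} and the comparison $p_M\le c_1(p(c_2\cdot)+L^{-Md})$ used in Proposition \ref{pro:katoclass}(c). Hence the boundary contributes $O(L^{-3Md/4})$ to each difference, deterministically in $\omega$.

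For $x\in I_M$ we compare the processes. Under $\mathbf P^{x,x}_t$ and $\mathbf P^{x,x}_{M,t}$ we couple $X$ and $X^M=\pi_M(X)$, which coincide up to $\tau_{\cK^{\langle M\rangle}}$. We need three estimates.
\begin{enumerate}
\item[(i)] $|p_M(t,x,x)-p(t,x,x)|\le\sum_{y'\in\pi_M^{-1}(x),\,y'\neq x}p(t,x,y')\cdot r_0$, and every such $y'$ lies at $d_m$-distance $\gtrsim L^{M-m}$ from $x$. Subordination combined with \eqref{eq:kum} and the count \eqref{eq:number-points} of preimages per shell then makes this small. The jump part needs care: $p(t,x,y)\le t\,J(x,y)$-type bounds with a polynomially decaying $J$ are still summable over shells against $L^{-Md}$-type tails, giving a bound $\varepsilon_M$ that is summable in $M$.
\item[(ii)] $\mathbf P^{x,x}_t[\tau_{\cK^{\langle M\rangle}}\le t]$ is controlled by the probability that the path leaves $\cC_m$-neighbourhoods, estimated via the comparison principle \cite[Lemma A.2]{bib:KOP} and the same heat-kernel tails.
\item[(iii)] On $\{\tau>t\}$, $e^{-\int V^\omega(X_s)ds}$ is the same under both boundary conditions.
\end{enumerate}
These give part (a) in a pathwise form, $|\mathbb L^D_M-\mathbb L^N_M|\le \varepsilon_M$ with $\sum\varepsilon_M^2<\infty$, so the square bound is immediate; no probabilistic independence is needed.

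For (b), the second limit compares $V^\omega$ and $V^\omega_M$ along paths staying in $I_M$-type regions. For $x\in\cK^{\langle M\rangle}$ we have $V_M^\omega(x)-V^\omega(x)=\int_{\cK^{\langle M\rangle}}\sum_{y'\ne y}W(x,y')\mu^\omega(dy)-\int_{(\cK^{\langle M\rangle})^c}W(x,y)\mu^\omega(dy)$. Taking $\mathbb E^{\mathbb Q}$ turns both terms into $\nu\int W(x,y)\,d\mathfrak m(y)$ over sets at $d_{\lfloor M/4\rfloor}$-distance $>1$ from $x$ when $x$ is in the bulk. Using $|e^{-a}-e^{-b}|\le|a-b|$ and Fubini, the expected difference is at most $t\,\nu\sup_x\int_{\cK^{\langle\infty\rangle}\setminus\cC_{\lfloor M/4\rfloor}(x)}W(x,y)\,d\mathfrak m(y)$, which tends to $0$ by (\ref{W1}.b). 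The boundary layer is again negligible. The first limit in (b) repeats the argument of (a) with $V^\omega_M$ in place of $V^\omega$; only expectations are needed here.

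The main obstacle is step (i)/(ii) for general $\phi$: we need heat-kernel tail bounds for the subordinate process that are uniform and summable over the geometric shells of preimages $\pi_M^{-1}(x)$. For this we plan to use \eqref{eq:kum} together with the Hartman--Wintner condition (\ref{B}.a) and \cite[Theorem 3.1]{bib:O}, following \cite[Lemmas 3.1--3.2]{bib:KaPP2} with the graph metric replacing the geodesic one.
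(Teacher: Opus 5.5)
Your proposal is correct and follows essentially the paper's route, namely the scheme of \cite[Proposition 3.1 and Lemma 3.1]{bib:KaPP2}: a deterministic bulk/boundary comparison of the Dirichlet and Neumann kernels, the exit event controlled through the graph distance and \cite[Lemma A.2]{bib:KOP}, and (\ref{W1}.b) for the periodization error; the differences are cosmetic, such as your boundary layer at scale $\lfloor M/4\rfloor$ instead of the paper's $\lfloor M/2\rfloor$. One claim in (i) is false and should be replaced: preimages $y'\neq x$ of a bulk point can be at $d_m$-distance $O(1)$ (for instance the copy of $x$ rotated about a vertex of $\cK^{\langle M\rangle}$ shared with a neighbouring $M$-complex), not $\gtrsim L^{M-m}$, but their Euclidean distance $\geq cL^{m}$, together with $p(t,x,y)\lesssim t\,|x-y|^{-d}\phi(|x-y|^{-d_w})$ (this, not $p\le tJ$, is the right tail bound) and (\ref{B}.b), still gives a summable $\varepsilon_M$; also, in (ii) and in (b), reduce the bridge probabilities to free-process probabilities by splitting $[0,t]$ at $t/2$, as the paper does.
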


\begin{proof}
The proof follows analogously to the proofs of \cite[Proposition 3.1 and Lemma 3.1]{bib:KaPP2}. The key difference is that we cannot use the geodesic metric in our estimates (see the discussion in the last lines of the first column on p.\ 4 in \cite{bib:KOP}).

Denote the vertices from $\cV_M^{\langle M \rangle}$ by $v_i$, $1\leq i\leq k$, and let $\Delta_{\left\lfloor M/2 \right\rfloor,v_i} \subset \cK^{\langle M \rangle}$, be the $\left\lfloor M/2 \right\rfloor$-complex attached to $v_i$.

If the process starts from $x \in \cD_M:=\cK^{\langle M \rangle} \backslash \bigcup_{i=1}^{k} \Delta_{\left\lfloor M/2 \right\rfloor,v_i}$, then, using \cite[Lemma A.2]{bib:KOP}, we have
\begin{equation*}
\left\{t\geq \tau_{\cK^{\langle M \rangle}}\right\} \subseteq \left\{ \sup_{0<s\leq t} d_{\left\lfloor M /2 \right\rfloor}(x,X_s) >2\right\} \subseteq \left\{ \sup_{0<s\leq t} |x-X_s| > c_{1} L^{M/2}\right\},
\end{equation*}
with a constant $c_1$, independent of $M$. We also have
\begin{equation*}
\left\{\sup_{0<s\leq t} |x-X_s| > c_{1} L^{M/2} \right\} \subseteq \left\{\sup_{0<s\leq t/2} |x-X_s| > c_{1} L^{M/2} \right\} \cup \left\{\sup_{t/2<s\leq t} |x-X_s| > c_{1} L^{M/2} \right\}.
\end{equation*}

Recall that $\mathfrak{m} \left( \cK^{\langle M \rangle} \backslash \cD_M \right) = k N^{\left\lfloor M/2 \right\rfloor}$. Using this set instead of the geodesic ball\linebreak $B(0,2^M-2^{(M/2)})$ we can follow the lines of the proofs of \cite[Proposition 3.1 and Lemma 3.1]{bib:KaPP2}.
\end{proof}

We get the following direct corollary from Theorem \ref{thm:convergence_with_star} and Lemma \ref{lem:commonlimit}.

\begin{corollary}
\label{cor:final}
For every $t>0$, $\mathbb{E}^{\mathbb{Q}}\mathbb L_M^{D,V^\omega}(t)$, $\mathbb{E}^{\mathbb{Q}}\mathbb L_{M}^{D,V_M^\omega}(t)$ and $\mathbb{E}^{\mathbb{Q}}\mathbb L_M^{N,V^\omega}(t)$ are also convergent as $M \to \infty$ to $\mathbb L(t)$ identified in Theorem \ref{thm:convergence_with_star}.
\end{corollary}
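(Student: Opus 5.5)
This is a direct consequence of Theorem~\ref{thm:convergence_with_star} and Lemma~\ref{lem:commonlimit}; the plan is to write each of the three expectations as $\mathbb{E}^{\mathbb{Q}}\mathbb L_{M}^{N,V^\omega_M}(t)$ plus differences that vanish as $M\to\infty$. Fix $t>0$. Theorem~\ref{thm:convergence_with_star} gives $\mathbb{E}^{\mathbb{Q}}\mathbb L_{M}^{N,V^\omega_M}(t)\to \mathbb L(t)$, with $\mathbb L(t)$ finite. The three expectations are then handled in a chain, each one reduced to the previous.

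\emph{First, the Dirichlet transform with the periodized potential.} We write
\[
\mathbb{E}^{\mathbb{Q}}\mathbb L_{M}^{D,V_M^\omega}(t)=\mathbb{E}^{\mathbb{Q}}\mathbb L_{M}^{N,V^\omega_M}(t)+\mathbb{E}^{\mathbb{Q}}\bigl(\mathbb L_{M}^{D,V_M^\omega}(t)-\mathbb L_{M}^{N,V^\omega_M}(t)\bigr).
\]
The last term tends to $0$ by the first limit in Lemma~\ref{lem:commonlimit}(b). Hence $\mathbb{E}^{\mathbb{Q}}\mathbb L_{M}^{D,V_M^\omega}(t)\to\mathbb L(t)$.

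\emph{Second, the Dirichlet transform with the original potential.} The second limit in Lemma~\ref{lem:commonlimit}(b) gives
\[
\mathbb{E}^{\mathbb{Q}}\bigl(\mathbb L_M^{D,V^\omega}(t)-\mathbb L_{M}^{D,V_M^\omega}(t)\bigr)\to0.
\]
Combined with the previous step, this yields $\mathbb{E}^{\mathbb{Q}}\mathbb L_M^{D,V^\omega}(t)\to\mathbb L(t)$.

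\emph{Third, the Neumann transform with the original potential.} Lemma~\ref{lem:commonlimit}(a) controls the $L^2(\mathbb Q)$ norm of $\mathbb L_M^{D,V^\omega}(t)-\mathbb L_M^{N,V^\omega}(t)$, and we pass to the $L^1$ norm by Jensen (or Cauchy--Schwarz):
\[
\bigl|\mathbb{E}^{\mathbb{Q}}(\mathbb L_M^{D,V^\omega}(t)-\mathbb L_M^{N,V^\omega}(t))\bigr|\le \bigl(\mathbb{E}^{\mathbb{Q}}(\mathbb L_M^{D,V^\omega}(t)-\mathbb L_M^{N,V^\omega}(t))^2\bigr)^{1/2}\to0 .
\]
Together with the second step, this gives $\mathbb{E}^{\mathbb{Q}}\mathbb L_M^{N,V^\omega}(t)\to\mathbb L(t)$.

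All expectations involved are finite, since the averaged traces are bounded by $\frac{1}{\mathfrak m(\mathcal K^{\langle M\rangle})}\int p_M(t,x,x)\,\mathfrak m(\mathrm dx)$, which is uniformly bounded by \cite[Lemma 2.1(b)]{bib:HB-KK-MO-KPP}. Consequently all the differences above are legitimate. There is no real obstacle: the only point needing care is this $L^2$-to-$L^1$ passage in the third step, and the finiteness check that justifies splitting the expectations.
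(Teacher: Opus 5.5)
Your proposal is correct and follows essentially the same route as the paper, which simply records the statement as a direct consequence of Theorem~\ref{thm:convergence_with_star} and Lemma~\ref{lem:commonlimit}. Your chain (Neumann periodized $\to$ Dirichlet periodized $\to$ Dirichlet original via part (b), then Dirichlet $\to$ Neumann original via part (a) and Cauchy--Schwarz) is exactly the intended unpacking of that remark.
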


The last  step is to show that the series of variances of the random variables $\mathbb L_M^{D,V^\omega}(t)$ and $\mathbb L_M^{N,V^\omega}(t)$ are convergent.

\begin{lemma} \label{lem:variances}
For every $t>0$ we have
\begin{equation}
\label{eq:dvarlemma}
\sum_{M=1}^{\infty} \mathbb{E}^{\mathbb{Q}} \left[ \mathbb L^{D,V^\omega}_M (t) - \mathbb{E}^{\mathbb{Q}}\mathbb L_M^{D,V^\omega}(t)\right]^2 < \infty
\end{equation}
and
\begin{equation}
\label{eq:nvarlemma}
\sum_{M=1}^{\infty} \mathbb{E}^{\mathbb{Q}} \left[ \mathbb L^{N,V^\omega}_M (t) - \mathbb{E}^{\mathbb{Q}}\mathbb L_M^{N,V^\omega}(t)\right]^2 < \infty.
\end{equation}
\end{lemma}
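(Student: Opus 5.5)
We follow the scheme of \cite[Lemma 3.2]{bib:KaPP2}: write the variance as a double integral of covariances and exploit the independence of the Poisson configuration on disjoint regions. Concentrate on \eqref{eq:dvarlemma}; the Neumann case \eqref{eq:nvarlemma} is handled identically, using the bridge representation of $u^N_{M,\omega}$ and the comparison $p_M(s,x,y)\le c_1(p(c_2s,x,y)+L^{-Md})$ from the proof of Proposition~\ref{pro:katoclass}(c).

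\textbf{Step 1.} Set $F_x(\omega):=p(t,x,x)\mathbf E^{x,x}_t[{\rm e}^{-\int_0^tV^\omega(X_s){\rm d}s};t<\tau_{\cK^{\langle M\rangle}}]$. Then
\[
\mathbb E^{\mathbb Q}\big[\mathbb L^{D,V^\omega}_M(t)-\mathbb E^{\mathbb Q}\mathbb L^{D,V^\omega}_M(t)\big]^2
=\frac{1}{L^{2Md}}\int_{\cK^{\langle M\rangle}}\int_{\cK^{\langle M\rangle}}\mathrm{Cov}_{\mathbb Q}(F_x,F_y)\,\mathfrak m({\rm d}x)\mathfrak m({\rm d}y),
\]
and $0\le F_x\le p(t,x,x)\le c(t)$ by \cite[Lemma 2.1]{bib:HB-KK-MO-KPP}.

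\textbf{Step 2 (localization).} Fix $n=\lfloor M/4\rfloor$, and for $x$ let $B_x:=\{z:d_n(x,z)\le 1\}$ be the analogue of $\cC_n(x)$, enlarged by one more layer of $n$-complexes. Replace $F_x$ by $\widetilde F_x$, defined in the same way but with the additional indicator that the bridge stays in $\cC_n(x)$ up to time $t$, and with $V^\omega$ replaced by $V^\omega_x:=\int_{B_x}W(\cdot,y)\mu^\omega({\rm d}y)$. Then $\widetilde F_x$ is measurable with respect to the Poisson configuration in $B_x$. Since ${\rm e}^{-a}-{\rm e}^{-b}\le |a-b|$,
\[
\mathbb E^{\mathbb Q}|F_x-\widetilde F_x|\le p(t,x,x)\,\mathbf P^{x,x}_t\Big[\sup_{s\le t}d_n(x,X_s)>1\Big]+t\,\nu\, p(t,x,x)\sup_{z}\int_{\cK^{\langle\infty\rangle}\setminus \cC_{n}(z)}W(z,y)\,\mathfrak m({\rm d}y).
\]
For the first term, use \cite[Lemma A.2]{bib:KOP} to turn $d_n$-distance into Euclidean distance $\ge cL^{n}$. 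Then use the bridge disintegration \eqref{eq:bridge} at $s=t/2$ together with symmetry, exactly as in the proof of Lemma~\ref{lem:commonlimit}, to bound it by $c\,\mathbf P^x[\sup_{s\le t}|X_s-x|>cL^{M/4}]$. Denote this bound by $\varepsilon_M$. It is summable in $M$, since subordinate Brownian motion has tails of the form $\mathbf P^x[\sup_{s\le t}|X_s-x|>r]\le c\,t\,\phi(r^{-d_w})$, which follows from \eqref{eq:kum} and the subordination formula, and $\phi(L^{-Md_w/4})$ decays geometrically by \eqref{B}.b. The second term is summable by (\ref{W1}.b).

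\textbf{Step 3 (decoupling).} If $d_n(x,y)>4$, then $B_x\cap B_y=\emptyset$, so $\widetilde F_x$ and $\widetilde F_y$ are independent and $\mathrm{Cov}(\widetilde F_x,\widetilde F_y)=0$. Hence $|\mathrm{Cov}(F_x,F_y)|\le 4c(t)(\delta_M(x)+\delta_M(y))$, where $\delta_M(x)$ is the Step 2 bound and is summable uniformly in $x$. For pairs with $d_n(x,y)\le 4$, bound the covariance by $c(t)^2$. The $\mathfrak m\otimes\mathfrak m$ measure of this near-diagonal set is at most $C L^{Md}L^{nd}$, because a $d_n$-ball of bounded radius contains boundedly many $n$-complexes (finite ramification). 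Its contribution is therefore $\le C L^{-Md}L^{Md/4}=CL^{-3Md/4}$, which is summable.

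\textbf{Main obstacle.} The hardest point is Step 2: making $\widetilde F_x$ genuinely depend only on the Poisson points in a bounded neighbourhood while keeping the error summable. This needs the uniform tail decay (W1.b) over $d_n$-neighbourhoods, rather than over geodesic balls, which may not exist on a general USNF. It also needs the bridge exit estimate, and the Neumann case in addition requires handling reflected paths folded by $\pi_M$. For this I would first try to reuse the $\cD_M$ decomposition of Lemma~\ref{lem:commonlimit}.
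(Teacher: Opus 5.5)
Your architecture is right, and it is the scheme of \cite[Lemma 3.2]{bib:KaPP2} that the paper invokes through the split of $V^\omega$ along $\cC_m(x)$. It consists of the covariance double integral, replacing $F_x$ by a functional of the Poisson points near $x$ at scale $n=\lfloor M/4\rfloor$, controlling the far part of the potential by (\ref{W1}.b), and counting the near-diagonal pairs. However, Step 2 fails as written.

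You confine the bridge to $\cC_n(x)=\{z:d_n(x,z)\le 1\}$ and claim that leaving this set forces $|X_s-x|\ge cL^n$. That is false. Take $x$ in an $n$-complex $\Delta$, not a vertex, at Euclidean distance $\varepsilon$ from a vertex $v$ that $\Delta$ shares with another $n$-complex $\Delta'$. Every $z\in\Delta'\setminus\Delta$ with $|z-v|\le\varepsilon$ has $d_n(x,z)=2$ but $|x-z|\le 2\varepsilon$. Consequently $\mathbf P^{x,x}_t[\sup_{s\le t}d_n(x,X_s)>1]$ is not small uniformly in $x$: for $x$ near $v$ the process passes into $\Delta'\setminus\Delta$ almost immediately with non-negligible probability. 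So your claim in Step 3 that $\delta_M(x)$ is summable uniformly in $x$ is wrong. The comparison \cite[Lemma A.2]{bib:KOP} only converts $d_n(x,X_s)>2$ into a Euclidean displacement of order $L^n$; this is exactly how it is used in the proof of Lemma \ref{lem:commonlimit}. Note also that $\{z:d_n(x,z)\le 1\}$ is $\cC_n(x)$ itself, not an enlargement. Your second error term needs $\cC_n(z)\subset B_x$ for every $z$ the path visits, which this choice does not give.

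The repair stays inside your scheme, with three changes.
\begin{enumerate}
\item Require the bridge to stay in $\{z:d_n(x,z)\le 2\}$. Exiting then means $d_n(x,X_s)>2$, hence $|x-X_s|\ge cL^n$. Your bridge disintegration at $t/2$ and the tail bound $c\,t\,\phi(cL^{-nd_w})$ now give an error that is uniform in $x$ and summable in $M$.
\item Take $B_x:=\{z:d_n(x,z)\le 3\}$. This set contains $\cC_n(z)$ whenever $d_n(x,z)\le 2$, so the far part of the potential is controlled by (\ref{W1}.b) exactly as you wrote.
\item Decouple when $d_n(x,y)>6$. The near-diagonal set still has $\mathfrak m\otimes\mathfrak m$-measure at most $CL^{Md}L^{nd}$, giving the summable contribution $CL^{-3Md/4}$.
\end{enumerate}

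For the Neumann case the folding is harmless. We have $X^M=\pi_M(X)$, and $d_n(\pi_M u,\pi_M w)\le d_n(u,w)$ for $n\le M$, since $\pi_M$ maps $n$-complexes onto $n$-complexes and chains onto chains. The reflected bridge is therefore localized by the same disintegration. The comparison $p_M\le c_1(p(c_2s,\cdot,\cdot)+L^{-Md})$ that you cite supplies the needed uniform bound on $p_M(t/2,\cdot,\cdot)$.
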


\begin{proof}
For $m \in \mathbb{Z}_{+}$ we set
\begin{equation}
V^{\omega,m}(x):= \int_{\mathcal C_m(x)} W(x,y) \mu^{\omega}(\textrm{d}y)
\end{equation}
and
\begin{equation}
\widetilde{V}^{\omega,m}(x):= \int_{\mathcal C_m(x)^c} W(x,y) \mu^{\omega}(\textrm{d}y).
\end{equation}
Recall that for $x \in \cK^{\langle \infty \rangle} \backslash \cV^{\langle \infty \rangle}_m$, $\mathcal C_m(x)$ is the unique $m$-complex containing $x$; for $x \in \cV^{\langle \infty \rangle}_m$ it is a sum of $m$-complexes attached to $x$ (there are $\textrm{rank}_m(x) \in \{1,2,3\}$ of them).

Using the expressions above as definitions of $V^{\omega,m}(x)$ and $\widetilde{V}^{\omega,m}(x)$ we  can argue as in  the proof of \cite[Lemma 3.2]{bib:KaPP2}.
\end{proof}

\begin{proof}[Proof of Theorem \ref{thm:IDS}]
Having proven Theorem \ref{thm:convergence_with_star} and Lemma \ref{lem:variances}, we can just follow the lines of the proof of \cite[Theorem 3.2]{bib:KaPP2}:\ by a Borel-Cantelli argument we get that $\mathbb Q-$almost surely, for all rational $t$'s   it holds
$\mathbb L_M^{D,V^\omega} (t)\to  L(t).$ We extend this to all $t>0$ by continuity. In particular, $\mathbb L_M^{D,V^\omega} (1)\to \mathbb L(1),$ yielding that almost surely the measures ${\rm e}^{-\lambda} \Lambda_M^{D,V^\omega}({\rm d}\lambda),$ and consequently also  $\Lambda_M^{D,V^\omega}({\rm d}\lambda)$, are finite. As every  sequence of finite measures on $\mathbb R_+$ is vaguely relatively compact, the almost sure vague convergence $\Lambda_M^{D,V^\omega}({\rm d}\lambda)\to \Lambda({\rm d}\lambda)$  follows. For $\Lambda_M^{N,V^\omega}({\rm d}\lambda)$ the argument can be repeated verbatim.
\end{proof}

\end{document}